\numberwithin{equation}{section} \DeclareMathSizes{2}{10}{12}{13}
\newtheorem{thm}{Proposition}[section]
\newtheorem{lem}[thm]{Lemma}
\newtheorem{defn}[thm]{Definition}
\title{
Pure resolutions of unbounded complexes of modules}
\author{Abhishek Banerjee}
\date{ }
\begin{document}

\maketitle

\centerline{\emph{Dept. of Mathematics, Indian Institute of Science, Bangalore-560012, India.}}
\centerline{\emph{Email: abhishekbanerjee1313@gmail.com}}

\medskip

\begin{abstract}
Let $R$ be a commutative ring. We show that pure injective resolutions and pure projective
resolutions can be constructed for unbounded complexes of $R$-modules. We use
these to obtain a closed symmetric monoidal structure on the unbounded pure derived category. 
\end{abstract}

\medskip

\medskip
\emph{MSC (2010) Subject Classification:}  13D09, 16E35, 18D10.

\medskip
\emph{Keywords:}  Pure resolutions, pure projectives, pure injectives, pure acyclic. 

\medskip

\medskip

\section{Introduction}

\medskip

\medskip
Given a ring $R$, there are two well known exact structures (in the sense of Quillen \cite{Quillen}) on the category
of $R$-modules: the usual exact structure and the pure exact structure (see \cite{Jal}). The usual derived
category $D(R)$, which is constructed by inverting quasi-isomorphisms in the homotopy category of $R$-modules,
has been studied extensively in homological algebra. Additionally, there has been recent interest by several
authors (see, for instance, \cite{JDC}, \cite{EsOd}, \cite{Gill}, \cite{Nee2}, \cite{Serpe}, \cite{Jal}) in studying the pure exact structure on the category of $R$-modules. This raises the question of which of the  properties of
the usual derived category $D(R)$ can also be extended to the ``pure derived category'' $D_{pur}(R)$, which is
obtained by inverting pure quasi-isomorphisms in the homotopy category of $R$-modules. When $R$
is a commutative ring, the purpose
of this paper is to construct pure injective and pure projective resolutions of complexes of
$R$-modules. We then use these resolutions to exhibit a closed symmetric monoidal structure 
on the category $D_{pur}(R)$. 

\medskip
It is important to note that in this article, we work with unbounded complexes of $R$-modules and consider
the unbounded pure derived category $D_{pur}(R)$. The question behind this paper was motivated naturally
by reading the recent work of Zheng and Huang \cite{Jal}, where the authors study pure  resolutions
of bounded complexes. Further, the authors in \cite{Jal} have also shown that any bounded above  
(resp. bounded below) complex of $R$-modules admits a pure injective (resp. pure projective) resolution. However,
the general question of pure resolutions for arbitrary unbounded complexes is still left open 
in \cite{Jal}. Since our methods require us to have a closed symmetric monoidal structure on
the category of $R$-modules, we will limit ourselves to commutative rings. It should be mentioned here
that even in the case of the classical derived category $D(R)$, the construction of projective and injective
resolutions of arbitrary unbounded complexes presents some difficulties. In \cite{Sp}, Spaltenstein showed
the existence of projective and injective resolutions for unbounded complexes, which allowed him
to  remove the boundedness conditions for the existence of certain derived functors of functors such as Hom and 
tensor product. The derived Hom and derived tensor product functors on the unbounded  derived category were also constructed  by B\"{o}kstedt and Neeman in \cite{BN},
 where the authors
used the method of homotopy colimits. For more  on resolutions of unbounded complexes, the reader
may also see the work of Alonso Tarr\'{i}o et al. \cite{Alt} and Serp\'{e} \cite{Serpe}. Our methods in this paper
are a combination of the classical method of constructing resolutions of bounded complexes (see, for instance,
\cite{Murf1}) along with the techniques of Spaltenstein \cite{Sp} for treating arbitrary unbounded complexes. 

\medskip
We now describe the structure of the paper in detail. In Section 2, we briefly recollect the notions of pure acyclic
complexes, pure projective modules, pure injective modules, pure quasi-isomorphisms and pure resolutions that 
we will need in the paper. In Section 3, we show how to construct pure injective resolutions for unbounded
complexes of $R$-modules. Thereafter, pure projective resolutions of unbounded complexes are constructed
in Section 4. It should be noted that due to the fact that tensoring preserves cokernels but not kernels, 
we will need to somewhat adjust our methods in Section 4, i.e., our arguments for pure projective resolutions
are not exactly the dual of our arguments for pure injective resolutions. Finally, in Section 5, we use
pure projective and pure injective resolutions to obtain a ``pure derived Hom functor'':
\begin{equation}\label{IHOM}
PHom^\bullet_R(\_\_,\_\_): D_{pur}(R)^{op}\times D_{pur}(R)\longrightarrow D_{pur}(R)
\end{equation} Then, in the case of a commutative ring $R$, \eqref{IHOM} removes the boundedness condition for the existence
of the pure derived Hom functor in \cite[$\S$ 4]{Jal}. We conclude by showing that we have natural isomorphisms:
\begin{equation}\label{Iqe5.10}
\begin{array}{c}
PHom_R^\bullet((A^\bullet\otimes_RB^\bullet)^\bullet,C^\bullet)\cong PHom_R^\bullet (A^\bullet,P
Hom^\bullet_R(B^\bullet,C^\bullet))\\
Hom_{D_{pur}(R)}((A^\bullet\otimes_RB^\bullet)^\bullet,C^\bullet)\cong Hom_{D_{pur}}(A^\bullet,P
Hom^\bullet_R(B^\bullet,C^\bullet))\\
\end{array}
\end{equation}
for any complexes $A^\bullet$, $B^\bullet$ and $C^\bullet$ of $R$-modules, thus giving the unbounded pure derived category $D_{pur}(R)$ the structure of a closed symmetric monoidal category. 

\medskip

\medskip
{\bf Acknowledgements:} The author is grateful for the hospitality of the Academy of Mathematics and Systems Science at the Chinese
Academy of Sciences in Beijing, where some of this paper was written.

\medskip

\medskip

\section{Pure acyclic complexes}

\medskip

\medskip 
In this section, we will briefly recall some facts on pure acyclic complexes, pure projectives, pure injectives and pure resolutions
that we will use in the rest of the paper. 
Throughout, we let $R$ be a commutative  ring with identity. We denote by $R-Mod$  the category
of   $R$-modules. We will denote by $C(R)$ the category of cochain complexes:
\begin{equation}
 \begin{CD} (M^\bullet,d_M^\bullet) : \qquad
\dots @>>> M^{i-1} @>d^{i-1}_M>> M^i @>d^i_M>> M^{i+1} @>>> \dots 
\end{CD}
\end{equation} of  $R$-modules and by $K(R)$ the homotopy category of $R-Mod$. By abuse of notation,
we will usually denote an object $(M^\bullet,d_M^\bullet)\in C(R)$ simply by $M^\bullet$. For any
$n\in \mathbb Z$, we let $M[n]^\bullet$ denote the shifted cochain complex given by
$M[n]^i:=M^{i+n}$ with differential $d_{M[n]}^i=(-1)^nd_M^{i+n}$. The unbounded derived category of $R$-modules  is denoted
by $D(R)$.  A cochain complex
$M^\bullet$ is said to be bounded above (resp. bounded below) if $M^i$ vanishes for all $i$ sufficiently
large (resp. for all $i$ sufficiently small).  Given complexes $(M^\bullet,d_M^\bullet)$ and $(N^\bullet,
d_N^\bullet)\in C(R)$, we have an internal Hom object $(Hom_R^\bullet(M^\bullet,N^\bullet),d^\bullet)\in C(R)$ given by:
\begin{equation*}
\begin{array}{c}
Hom_R^i(M^\bullet,N^\bullet):=\underset{j\in \mathbb Z}{\prod}Hom_R(M^j,N^{i+j})\quad\forall\textrm{ }i\in \mathbb Z\\
d^i(f):=d_N\circ f-(-1)^if\circ d_M\quad \forall\textrm{ }f\in Hom_R^i(M^\bullet,N^\bullet)\\
\end{array}
\end{equation*} We now recall the following definitions.

\medskip
\begin{defn}\label{pureacy} (see \cite{War}) (a) A monomorphism $f:M'\longrightarrow M$ in $R-Mod$ is said to be  pure  if
the induced morphism $N\otimes_Rf:N\otimes_RM'\longrightarrow N\otimes_RM$ is a monomorphism for 
each module $N\in R-Mod$. 

\medskip
(b) A short exact sequence :
\begin{equation}
0\longrightarrow M'\longrightarrow M\longrightarrow M''\longrightarrow 0 
\end{equation} in $R-Mod$ is said to be pure acyclic if the induced sequence 
\begin{equation}
0\longrightarrow N\otimes_RM'\longrightarrow N\otimes_RM\longrightarrow N\otimes_RM''\longrightarrow 0 
\end{equation} is acyclic for each module $N\in R-Mod$. In such a situation, the morphism
$M\longrightarrow M''$ is said to be a pure epimorphism. 

\medskip
(c) (see \cite{Gr99}) More generally, a complex $M^\bullet\in C(R)$ is said to be pure acyclic if  the induced sequence
$N\otimes_RM^\bullet$ is acyclic for each module $N\in R-Mod$. This is equivalent to the complex $Hom^\bullet_R(F,M^\bullet)$
being acyclic for each finitely presented $R$-module $F$. 

\medskip
(d) An acyclic  complex  $(M^\bullet,d_M^\bullet)\in C(R)$  is said to be pure acyclic at some given $n\in \mathbb Z$ if 
\begin{equation}
0\longrightarrow Im(d_M^{n-1})=Ker(d_M^n)\longrightarrow M^n\longrightarrow Coker(d_M^{n-1})\longrightarrow 0 
\end{equation} is a pure short exact sequence in the sense of (b). The  complex  $(M^\bullet,d_M^\bullet)\in C(R)$ is pure acyclic in the sense of
(c) if and only if
it is pure acyclic at each $n\in \mathbb Z$.
\end{defn} 

\medskip
Given a morphism $f^\bullet:M^\bullet\longrightarrow N^\bullet$, its mapping cone $C_f^\bullet$ is taken to be the complex
$C_f^\bullet:=M[1]^\bullet\oplus N^\bullet$ with differential $d_{C_f}^\bullet$ given by
\begin{equation}
d_{C_f}^i=\begin{pmatrix} -d_M^{i+1} & 0 \\ f^{i+1} & d_N^i \\
\end{pmatrix} :M^{i+1}\oplus N^i\longrightarrow M^{i+2}\oplus N^{i+1}
\end{equation} We notice that
the canonical projections $p^i:M^i\oplus N^{i-1}\longrightarrow M^i$  determine a morphism of complexes
from $C_f^\bullet$ to $M[1]^\bullet$.

\medskip
\begin{defn} (see \cite[Definition 2.7]{Jal}) A morphism $f^\bullet:M^\bullet\longrightarrow N^\bullet$ 
in $C(R)$ is   a pure quasi-isomorphism if its cone $C_f^\bullet$ is pure acyclic. 

\medskip
Equivalently, $f^\bullet$ is a pure quasi-isomorphism if  $M'\otimes_Rf^\bullet:M'\otimes_RM^\bullet\longrightarrow M'\otimes_RN^\bullet$ is a 
quasi-isomorphism for each module $M'\in R-Mod$. 

\end{defn} 

\medskip
\begin{defn} (see \cite{War}) A module $P\in R-Mod$ is  pure projective if the functor $Hom_R^\bullet(P,\_\_)$ carries 
pure acyclic complexes to pure acyclic complexes.
 Similarly, a module $I\in R-Mod$ is   pure
injective if the functor $Hom_R^\bullet(\_\_,I)$ preserves pure acyclic complexes.

\medskip
 The category of pure projectives in $R-Mod$ will be denoted
by $\mathcal{PP}$ and the category of pure injectives in $R-Mod$ by
$\mathcal{PI}$. 
\end{defn}

\medskip
We mention here (see \cite[Remark 2.6]{Jal}) that a complex $M^\bullet\in C(R)$ is pure acyclic if and only
if $Hom^\bullet_R(P,M^\bullet)$ is  acyclic for any pure projective $P\in \mathcal{PP}$. This is also equivalent
to $Hom^\bullet_R(M^\bullet,I)$ being  acyclic for any pure injective $I\in \mathcal {PI}$. 

\medskip
On the other hand, (see \cite[Remark 2.8]{Jal})  a morphism $f^\bullet:M^\bullet\longrightarrow N^\bullet$ of  complexes
is a pure quasi-isomorphism if and only if $Hom^\bullet_R(P,f^\bullet):Hom^\bullet_R(P,M^\bullet)
\longrightarrow Hom^\bullet_R(P,N^\bullet)$ (resp. $Hom^\bullet_R(f^\bullet,I): 
Hom^\bullet_R(N^\bullet,I)\longrightarrow Hom^\bullet_R(M^\bullet,I)$ ) is a 
quasi-isomorphism for each $P\in \mathcal{PP}$ (resp. for each $I\in \mathcal{PI}$). 

\medskip
\begin{defn}\label{D2.4} (see \cite{Jal}) (a) Let $M^\bullet\in C(R)$. A morphism $f^\bullet:P^\bullet\longrightarrow M^\bullet$
is said to be a pure projective resolution of $M^\bullet$ if it satisfies the following conditions:

\smallskip
$\hspace{0.1in}$(i)  $P^\bullet$ is a complex of pure projective modules and $f^\bullet$ is a pure quasi-isomorphism. 

\smallskip
$\hspace{0.1in}$(ii) The functor $Hom^\bullet_R(P^\bullet,\_\_)$ preserves pure acyclic complexes. 

\medskip
(b) Dually, a morphism $g^\bullet:M^\bullet\longrightarrow I^\bullet$ is said to be a pure injective
resolution of $M^\bullet$ if it satisfies:

\smallskip
$\hspace{0.1in}$ (i)  $I^\bullet$ is a complex of pure injective modules and $g^\bullet$ is a pure quasi-isomorphism. 

\smallskip
$\hspace{0.1in}$(ii) The functor $Hom_R^\bullet(\_\_,I^\bullet)$ preserves pure acyclic complexes. 

\end{defn}

\medskip
Given an $R$-module $M$, it is known (see \cite[Corollary 6]{War}) that there exists a pure injective module $I\in \mathcal{PI}$
and a pure monomorphism $M\hookrightarrow I$. In fact, this holds more generally in any locally finitely presented
additive category (see Herzog \cite{Herzog}). The aim of this paper  is to show that any unbounded complex 
of $R$-modules admits a ``pure projective resolution'' and a ``pure injective resolution''. In \cite{Jal}, Zheng and Huang have already shown that 
any bounded above (resp. bounded below) complex in $C(R)$ admits a pure injective resolution 
(resp. a pure projective resolution) . The proof of Zheng and Huang
in \cite{Jal} uses the techniques of homotopy (co)limits due to B\"{o}kstedt and Neeman \cite{BN} (see
also \cite{Neebook}). However, our proof
will use a combination of the classical technique for constructing  resolutions along with the methods of
Spaltenstein \cite{Sp} for treating arbitrary unbounded complexes.

\medskip

\section{Pure injective resolutions of unbounded complexes}

\medskip

\medskip
In this section, we will need one more concept, that of  a ``$K$-pure injective complex'', which will be
analogous to the classical notion of  a    $K$-injective complex (see, for
instance,  \cite[$\S$ 1]{Sp}).  Although this notion already appears implicitly in Definition \ref{D2.4},  we propose the following definition, which does not seem
to have appeared explicitly before in the literature. 

\medskip
\begin{defn} We will say that a cochain complex $M^\bullet\in C(R)$ is $K$- pure injective if 
the functor $Hom_R^\bullet(\_\_,M^\bullet):C(R)\longrightarrow C(R)$ takes pure
acyclic complexes to pure acyclic complexes. 
\end{defn} 

\medskip
\begin{thm}\label{P3.15} Let $M^\bullet\in C(R)$. Then, the following statements are equivalent:

\medskip
(a) $M^\bullet$ is a $K$-pure injective complex. 

\medskip
(b) For any pure acyclic complex $A^\bullet\in C(R)$, we have $Hom_{K(R)}(A^\bullet,M^\bullet)=0$. 
\end{thm}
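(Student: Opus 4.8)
The plan is to prove the equivalence by relating the cochain complex $Hom_R^\bullet(A^\bullet,M^\bullet)$ — which encodes all degrees of the internal Hom — to the more classical object $Hom_{K(R)}(A^\bullet,M^\bullet[n])$ for varying $n$. The starting observation is the standard identity
\begin{equation*}
H^n\bigl(Hom_R^\bullet(A^\bullet,M^\bullet)\bigr)\cong Hom_{K(R)}(A^\bullet,M[n]^\bullet),
\end{equation*}
valid for all complexes (cocycles in degree $n$ are chain maps $A^\bullet\to M[n]^\bullet$, coboundaries are nullhomotopic ones). Thus $Hom_R^\bullet(A^\bullet,M^\bullet)$ is acyclic in the ordinary sense precisely when $Hom_{K(R)}(A^\bullet,M[n]^\bullet)=0$ for every $n$. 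So the content of the proposition is to upgrade ``acyclic'' to ``pure acyclic'' on the side of (a), and to replace the quantifier over all shifts $n$ by the single statement (b); the latter is costless because the class of pure acyclic complexes $A^\bullet$ is closed under shift, so ``$Hom_{K(R)}(A^\bullet,M^\bullet)=0$ for all pure acyclic $A^\bullet$'' automatically gives ``$Hom_{K(R)}(A^\bullet,M[n]^\bullet)=0$ for all pure acyclic $A^\bullet$ and all $n$''.

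For the direction (a)$\Rightarrow$(b): assuming $M^\bullet$ is $K$-pure injective and $A^\bullet$ is pure acyclic, the complex $Hom_R^\bullet(A^\bullet,M^\bullet)$ is pure acyclic, hence in particular acyclic, so $H^0\bigl(Hom_R^\bullet(A^\bullet,M^\bullet)\bigr)=Hom_{K(R)}(A^\bullet,M^\bullet)=0$. For (b)$\Rightarrow$(a): given (b), first replace $A^\bullet$ by its shifts to conclude $H^n\bigl(Hom_R^\bullet(A^\bullet,M^\bullet)\bigr)=Hom_{K(R)}(A^\bullet,M[n]^\bullet)=Hom_{K(R)}(A[-n]^\bullet,M^\bullet)=0$ for all $n$, using that $A[-n]^\bullet$ is again pure acyclic. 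This shows $Hom_R^\bullet(A^\bullet,M^\bullet)$ is acyclic, but we need it to be \emph{pure} acyclic. To get purity, I would invoke the characterization already recorded in the excerpt (the remark that a complex is pure acyclic iff $Hom_R^\bullet(P,-)$ of it is acyclic for every pure projective $P$, equivalently iff tensoring with every module is acyclic): it suffices to show $Hom_R^\bullet(P, Hom_R^\bullet(A^\bullet,M^\bullet))$ is acyclic for every pure projective $P\in\mathcal{PP}$. By the tensor-hom adjunction at the level of complexes, there is a natural isomorphism
\begin{equation*}
Hom_R^\bullet\bigl(P, Hom_R^\bullet(A^\bullet,M^\bullet)\bigr)\cong Hom_R^\bullet\bigl((P\otimes_R A^\bullet)^\bullet, M^\bullet\bigr)\cong Hom_R^\bullet\bigl(A^\bullet, Hom_R^\bullet(P,M^\bullet)\bigr),
\end{equation*}
and since $P$ is pure projective, $P\otimes_R A^\bullet$ is again pure acyclic (tensoring a pure acyclic complex with a module preserves pure acyclicity by Definition \ref{pureacy}(c)). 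So $H^n$ of the left-hand side equals $Hom_{K(R)}\bigl((P\otimes_R A^\bullet)[-n]^\bullet, M^\bullet\bigr)$, which vanishes by hypothesis (b) applied to the pure acyclic complex $(P\otimes_R A^\bullet)[-n]^\bullet$. Hence $Hom_R^\bullet(A^\bullet,M^\bullet)$ is pure acyclic, giving (a).

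The main technical point — and the one I would be most careful about — is the naturality and correctness of the adjunction isomorphism $Hom_R^\bullet(P,Hom_R^\bullet(A^\bullet,M^\bullet))\cong Hom_R^\bullet((P\otimes_R A^\bullet)^\bullet,M^\bullet)$ at the level of cochain complexes, including the sign conventions in the differentials as fixed in the excerpt's definition of the internal Hom; one must check the isomorphism is a genuine isomorphism of complexes, not merely a degreewise isomorphism. Everything else is bookkeeping with the identity $H^n(Hom_R^\bullet(-,-))=Hom_{K(R)}(-,-[n])$ and with the stability of the pure acyclic class under shift and under tensoring with a fixed module, both of which are available from Section 2. I would also note for completeness that the proof only ever uses $P$ a single fixed pure projective at a time, so no set-theoretic or smallness issues arise.
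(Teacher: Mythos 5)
Your proof is correct and follows essentially the same route as the paper: both directions rest on the identity $H^k(Hom_R^\bullet(B^\bullet,C^\bullet))\cong Hom_{K(R)}(B[-k]^\bullet,C^\bullet)$, the adjunction $Hom_R^\bullet(X,Hom_R^\bullet(A^\bullet,M^\bullet))\cong Hom_R^\bullet(X\otimes_RA^\bullet,M^\bullet)$, and the stability of pure acyclicity under shifts and under tensoring with a fixed module. The only (immaterial) difference is that you test pure acyclicity of $Hom_R^\bullet(A^\bullet,M^\bullet)$ against pure projective modules $P$, whereas the paper tests against finitely presented modules $F$; both criteria are recorded in Section~2 and the argument is otherwise identical.
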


\begin{proof} (a) $\Rightarrow$ (b): For any two complexes $B^\bullet$, $C^\bullet\in C(R)$, it is well known
(see, for instance, \cite[$\S$0.4]{Sp}) that 
\begin{equation}\label{hom} 
H^k(Hom_R^\bullet(B^\bullet,C^\bullet))=Hom_{K(R)}(B^\bullet,C[k]^\bullet)\qquad\forall\textrm{ }k\in \mathbb Z
\end{equation} Let $A^\bullet\in Ch(R)$ be pure acyclic. Since $M^\bullet$ is $K$-pure injective, it follows that $Hom_R^\bullet(A^\bullet,
M^\bullet)$ is pure acyclic and, in particular, acyclic. Then, $Hom_{K(R)}(A^\bullet,M^\bullet)=H^0(Hom_R^\bullet(A^\bullet,M^\bullet))=0$. 

\medskip
(b) $\Rightarrow$ (a): Shifting the indices in \eqref{hom}, we obtain:
\begin{equation}\label{homp}
H^k(Hom_R^\bullet(B^\bullet,C^\bullet))=Hom_{K(R)}(B[-k]^\bullet,C^\bullet)\qquad\forall\textrm{ }k\in \mathbb Z
\end{equation} for any two complexes $B^\bullet$, $C^\bullet\in C(R)$. Now let $M^\bullet\in C(R)$ be such that $Hom_{K(R)}(A^\bullet,M^\bullet)=0$
for every pure acyclic $A^\bullet\in C(R)$.  We have to show that $Hom_R^\bullet(A^\bullet,M^\bullet)$ is pure acyclic,
or equivalently that $Hom_R^\bullet(F,Hom_R^\bullet(A^\bullet,M^\bullet))\cong Hom_R^\bullet
(F\otimes_RA^\bullet,M^\bullet)$ is acyclic for every finitely presented $R$-module $F$. If $A^\bullet$ is pure acyclic,
it is immediate from Definition   \ref{pureacy} that $(F\otimes_R A^\bullet)[-k]^\bullet$ is also pure acyclic for any
$k\in \mathbb Z$. It now follows from \eqref{homp} that:
\begin{equation}
H^k(Hom_R^\bullet
(F\otimes_RA^\bullet,M^\bullet))=Hom_{K(R)}((F\otimes_R A^\bullet)[-k]^\bullet,M^\bullet)=0\qquad\forall\textrm{ }k\in \mathbb Z
\end{equation} This proves the result. 

\end{proof} 

\medskip
Let $K_{pac}(R)$ denote the full subcategory of  $K(R)$ consisting of complexes that are also pure acyclic. The cone of a morphism of pure acyclic complexes is still pure acyclic and hence $K_{pac}(R)$ is a triangulated subcategory.  Then, since 
 $K_{pac}(R)$ is closed under direct summands, it follows from Rickard's criterion \cite[Proposition 1.3]{Rick} (see also \cite{Nee1}) that $K_{pac}(R)$ is a thick
subcategory. We then consider  the ``pure derived category'' $D_{pur}(R)$ as in \cite[$\S$3]{Jal} given by the Verdier quotient:
\begin{equation}
D_{pur}(R):=K(R)/K_{pac}(R)
\end{equation} Consider a ``right roof'' in $K(R)$ given by a pair of morphisms $(f^\bullet,u^\bullet)$ of the form: 
\begin{equation}\label{rroof}
A^\bullet \overset{f^\bullet}{\longrightarrow} C^\bullet \overset{u^\bullet}{\Longleftarrow} B^\bullet
\end{equation} where $u^\bullet$ is a pure quasi-isomorphism. Then, the morphisms in $D_{pur}(R)$ from 
$A^\bullet$ to $B^\bullet$ can be given by equivalence classes of right roofs as in \eqref{rroof} (see \cite{GaZi}
for details). We will now characterize $K$-pure injective complexes in terms of the pure derived category
$D_{pur}(R)$. 

\medskip
\begin{thm}\label{P3.16} Let $M^\bullet\in C(R)$. Then, the following statements are equivalent:

\medskip
(a) $M^\bullet$ is a $K$-pure injective complex. 

\medskip
(b) For any  complex $A^\bullet\in C(R)$, we have  $Hom_{K(R)}(A^\bullet,M^\bullet)\cong Hom_{D_{pur}(R)}(
A^\bullet,M^\bullet)$. 
\end{thm}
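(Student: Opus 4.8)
The plan is to deduce this from Proposition \ref{P3.15} together with standard calculus-of-fractions bookkeeping in the Verdier quotient $D_{pur}(R) = K(R)/K_{pac}(R)$. There is an obvious functor $Q\colon K(R) \to D_{pur}(R)$ which is the identity on objects, and it induces a natural map $Hom_{K(R)}(A^\bullet, M^\bullet) \to Hom_{D_{pur}(R)}(A^\bullet, M^\bullet)$ sending $f^\bullet$ to the roof $A^\bullet \xrightarrow{f^\bullet} M^\bullet \xLeftarrow{\mathrm{id}} M^\bullet$. So the content is: $M^\bullet$ is $K$-pure injective if and only if this map is a bijection for every $A^\bullet$.

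For the direction (a) $\Rightarrow$ (b): assume $M^\bullet$ is $K$-pure injective. I would first show injectivity. If $f^\bullet\colon A^\bullet \to M^\bullet$ becomes zero in $D_{pur}(R)$, then (by the description of when a roof is zero) $f^\bullet$ factors through a pure quasi-isomorphism with pure acyclic cone, i.e.\ there is a pure quasi-isomorphism $u^\bullet\colon A^\bullet \to A'^\bullet$ with $f^\bullet = g^\bullet \circ u^\bullet$ where... actually the cleanest route is: $Q(f^\bullet) = 0$ means $f^\bullet$ factors in $K(R)$ as $A^\bullet \xrightarrow{a^\bullet} N^\bullet \xrightarrow{b^\bullet} M^\bullet$ for some complex $N^\bullet$ with $b^\bullet$ killing... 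I would instead use the standard fact that $Q(f^\bullet)=0$ iff $f^\bullet$ factors through an object of $K_{pac}(R)$ up to homotopy, equivalently $f^\bullet$ factors as $A^\bullet \to P^\bullet \to M^\bullet$ with $P^\bullet$ pure acyclic. But $Hom_{K(R)}(P^\bullet, M^\bullet) = 0$ by Proposition \ref{P3.15}(b), so $f^\bullet \simeq 0$ in $K(R)$, giving injectivity. For surjectivity, take an arbitrary roof $A^\bullet \xrightarrow{f^\bullet} C^\bullet \xLeftarrow{u^\bullet} M^\bullet$ with $u^\bullet$ a pure quasi-isomorphism; I must produce a genuine morphism $A^\bullet \to M^\bullet$ in $K(R)$ representing it. The key claim is that a pure quasi-isomorphism $u^\bullet\colon M^\bullet \to C^\bullet$ with $M^\bullet$ $K$-pure injective admits a homotopy retraction $r^\bullet\colon C^\bullet \to M^\bullet$, $r^\bullet u^\bullet \simeq \mathrm{id}$. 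This follows because the cone $C_u^\bullet$ is pure acyclic, there is a triangle $M^\bullet \xrightarrow{u^\bullet} C^\bullet \to C_u^\bullet \to M[1]^\bullet$, and applying $Hom_{K(R)}(-, M^\bullet)$ together with Proposition \ref{P3.15} (vanishing against the pure acyclic complexes $C_u^\bullet$ and $C_u[-1]^\bullet$) shows $Hom_{K(R)}(u^\bullet, M^\bullet)$ is an isomorphism, so $\mathrm{id}_{M^\bullet}$ lifts to the required $r^\bullet$. Then $r^\bullet \circ f^\bullet \colon A^\bullet \to M^\bullet$ is a morphism in $K(R)$ whose image roof is equivalent to $(f^\bullet, u^\bullet)$, since $u^\bullet \circ r^\bullet \circ f^\bullet \simeq f^\bullet$ can be checked... one has to verify the roof equivalence, but this is routine once $r^\bullet u^\bullet \simeq \mathrm{id}$.

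For (b) $\Rightarrow$ (a): assume $Hom_{K(R)}(A^\bullet, M^\bullet) \cong Hom_{D_{pur}(R)}(A^\bullet, M^\bullet)$ for all $A^\bullet$. By Proposition \ref{P3.15} it suffices to show $Hom_{K(R)}(A^\bullet, M^\bullet) = 0$ for every pure acyclic $A^\bullet$. But if $A^\bullet \in K_{pac}(R)$, then $A^\bullet \cong 0$ in the Verdier quotient $D_{pur}(R)$, so $Hom_{D_{pur}(R)}(A^\bullet, M^\bullet) = 0$, and by the assumed isomorphism $Hom_{K(R)}(A^\bullet, M^\bullet) = 0$ as well. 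This direction is essentially immediate.

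The main obstacle is the surjectivity half of (a) $\Rightarrow$ (b): one needs the homotopy retraction of a pure quasi-isomorphism out of $M^\bullet$, and then must carefully check that $r^\bullet \circ f^\bullet$ really does represent the given right roof under the standard roof-equivalence relation (chasing through a common refinement). The retraction step is where $K$-pure injectivity of $M^\bullet$ is genuinely used, via Proposition \ref{P3.15} applied to the two pure acyclic shifts of the cone $C_u^\bullet$; everything else is formal manipulation in the calculus of fractions.
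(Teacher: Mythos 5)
Your proposal is correct and follows essentially the same route as the paper: the easy direction via $A^\bullet \cong 0$ in $D_{pur}(R)$ plus Proposition \ref{P3.15}, and the hard direction via the homotopy retraction $r^\bullet$ of a pure quasi-isomorphism $u^\bullet$ out of $M^\bullet$, obtained by applying $Hom_{K(R)}(\_\_,M^\bullet)$ to the triangle on $u^\bullet$ and killing the two shifts of the pure acyclic cone. The only cosmetic difference is that you split the bijection into injectivity and surjectivity (using the ``factors through $K_{pac}(R)$'' criterion for the former), whereas the paper exhibits the two mutually inverse assignments on roofs directly; the mathematical content is the same.
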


\begin{proof}  (b) $\Rightarrow$ (a): Let $A^\bullet\in C(R)$ be pure acyclic. Then, $A^\bullet =0$
in the pure derived category $D_{pur}(R)$. Hence, $Hom_{K(R)}(A^\bullet,M^\bullet)\cong Hom_{D_{pur}(R)}(A^\bullet,M^\bullet)=0$ 
and it follows from Proposition \ref{P3.15} that $M^\bullet $ is $K$-pure injective.

\medskip
(a) $\Rightarrow$ (b): Suppose that $M^\bullet$ is a $K$-pure injective complex. We first show that if
$u^\bullet:M^\bullet\longrightarrow N^\bullet$ is a pure quasi-isomorphism, it must have a left inverse up to 
homotopy, i.e., we must have some $v^\bullet:N^\bullet
\longrightarrow M^\bullet$ such that $v^\bullet\circ u^\bullet \sim 1$.  If $C_u^\bullet$ denotes the cone of
$u^\bullet$, applying the functor $Hom_{K(R)}(\_\_,M^\bullet)$ to the distinguished triangle
$M^\bullet\overset{u^\bullet}{\longrightarrow}N^\bullet \longrightarrow C_u^\bullet$ gives an induced exact sequence:
\begin{equation*}
Hom_{K(R)}(C_u^\bullet,M^\bullet)\rightarrow Hom_{K(R)}(N^\bullet,M^\bullet)
\rightarrow Hom_{K(R)}(M^\bullet,M^\bullet)
\rightarrow Hom_{K(R)}(C_u[-1]^\bullet,M^\bullet) 
\end{equation*} Since $u^\bullet$ is a pure quasi-isomorphism, its cone  $C_u^\bullet$ is pure acyclic and $M^\bullet$
being $K$-pure injective, we get $Hom_{K(R)}(C_u^\bullet,M^\bullet)=Hom_{K(R)}(C_u[-1]^\bullet,M^\bullet)=0$. 
This gives us an isomorphism $Hom_{K(R)}(u^\bullet,M^\bullet): Hom_{K(R)}(N^\bullet,M^\bullet)
\overset{\cong}{\longrightarrow} Hom_{K(R)}(M^\bullet,M^\bullet)$. Choosing $1\in Hom_{K(R)}(M^\bullet,M^\bullet)$, 
it follows that there exists  $v^\bullet:N^\bullet
\longrightarrow M^\bullet$ such that $v^\bullet\circ u^\bullet \sim 1$. Now, given a morphism 
 $(f^\bullet,u^\bullet)\in Hom_{D_{pur}(R)}(A^\bullet,M^\bullet)$ having the form of a right roof 
\begin{equation}\label{rroof1}
A^\bullet \overset{f^\bullet}{\longrightarrow} N^\bullet \overset{u^\bullet}{\Longleftarrow} M^\bullet
\end{equation} we associate  $(f^\bullet,u^\bullet)\in Hom_{D_{pur}(R)}(A^\bullet,M^\bullet)$ to
$v^\bullet\circ f^\bullet \in Hom_{K(R)}(A^\bullet, M^\bullet)$. Conversely, any morphism $g^\bullet
\in Hom_{K(R)}(A^\bullet, M^\bullet)$ is associated to the roof $(g^\bullet,1)\in  Hom_{D_{pur}(R)}(A^\bullet,M^\bullet)$. 
Since the right roofs $(v^\bullet\circ f^\bullet,1)$, $ (f^\bullet,u^\bullet)$ are equivalent
in $Hom_{D_{pur}(R)}(A^\bullet,M^\bullet)$, it is clear that these two associations are inverse to each other. Hence, we have $Hom_{K(R)}(A^\bullet,M^\bullet)\cong Hom_{D_{pur}(R)}(
A^\bullet,M^\bullet)$. 

\end{proof} 

\medskip
\begin{thm}\label{P3.2}  Let $(I^\bullet,d_I^\bullet)\in C(R)$ be a bounded below complex of pure injective modules. Then, $I^\bullet$
is K-pure injective.
\end{thm}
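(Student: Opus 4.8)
The plan is to use the characterization from Proposition~\ref{P3.15}: it suffices to show that for every pure acyclic complex $A^\bullet \in C(R)$, every chain map $A^\bullet \to I^\bullet$ is null-homotopic. So fix a pure acyclic $A^\bullet$ and a morphism $\varphi^\bullet: A^\bullet \to I^\bullet$, and let $n_0 \in \mathbb Z$ be such that $I^i = 0$ for all $i < n_0$. I would construct a homotopy $s^\bullet$ with $\varphi^i = d_I^{i-1} s^i + s^{i+1} d_A^i$ by induction on $i$, starting from the bottom: for $i \le n_0$ we are forced to set $s^i = 0$ (since $I^{i-1} = 0$), and the relation $\varphi^{n_0} = s^{n_0+1} d_A^{n_0}$ must be established as the base case, then propagated upward.

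The engine of the induction is the fact that $A^\bullet$ is pure acyclic, hence pure acyclic at each $n$ (Definition~\ref{pureacy}(d)): the short exact sequence
\begin{equation*}
0 \longrightarrow Z^n := \mathrm{Ker}(d_A^n) \longrightarrow A^n \overset{\bar d_A^n}{\longrightarrow} B^{n+1} := \mathrm{Im}(d_A^n) \longrightarrow 0
\end{equation*}
is a pure short exact sequence, and $B^{n+1} = Z^{n+1}$. At the inductive step I will have a partial homotopy $s^{\le i}$ already defined and will have produced the correction term needed to define $s^{i+1}: A^{i+1} \to I^i$ so that the homotopy identity holds in degree $i$. The key point is that the map $\psi^{i+1} := \varphi^{i+1} - d_I^i s^{i+1}$ (once $s^{i+1}$ is chosen) will vanish on $Z^{i+1} = B^{i+1}$ — this is the usual diagram-chase in building homotopies against an object killing acyclic complexes — so $\psi^{i+1}$ factors through $\bar d_A^{i+1}: A^{i+1} \twoheadrightarrow B^{i+2}$. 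To define $s^{i+2}: A^{i+2} \to I^{i+1}$ one then needs to extend the induced map $B^{i+2} \to I^{i+1}$ along the \emph{pure} monomorphism $B^{i+2} \hookrightarrow A^{i+2}$, and this is exactly where pure injectivity of $I^{i+1}$ is used: by the remark recalled after Definition~\ref{D2.4} (or directly from the definition of pure injective), $\mathrm{Hom}_R(-, I^{i+1})$ is exact on pure short exact sequences, so the restriction map $\mathrm{Hom}_R(A^{i+2}, I^{i+1}) \to \mathrm{Hom}_R(B^{i+2}, I^{i+1})$ is surjective and the extension exists.

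The main obstacle — and the reason boundedness below is needed — is getting the induction started and guaranteeing that at each finite stage only finitely many choices have been made, so that the construction is genuinely well-defined degree by degree (there is no limiting/convergence issue here, unlike the unbounded case treated later in the paper). For the base case, since $I^{n_0-1}=0$, the degree-$n_0$ homotopy identity reads $\varphi^{n_0} = s^{n_0+1} d_A^{n_0}$; but $\varphi^{n_0}: A^{n_0} \to I^{n_0}$ composed with $d_I^{n_0-1}=0$ shows nothing directly, so instead one observes $d_I^{n_0-1}\varphi^{n_0} = 0$ automatically and must check that $\varphi^{n_0}$ kills $Z^{n_0} = B^{n_0}$: this follows because $d_I^{n_0-1} = 0$ forces $\varphi$ restricted to $A^{n_0-1}$'s image considerations to degenerate — more cleanly, $\varphi^{n_0}|_{Z^{n_0}}$ factors as $Z^{n_0} \to A^{n_0} \to I^{n_0}$, and using that $A^{\le n_0-1}$ maps into $I^{\le n_0 -1}=0$ one gets $\varphi^{n_0} d_A^{n_0-1} = d_I^{n_0-1}\varphi^{n_0-1} = 0$, i.e. $\varphi^{n_0}$ kills $B^{n_0}=Z^{n_0}$, hence factors through $B^{n_0+1}$ via $\bar d_A^{n_0}$, and pure injectivity of $I^{n_0}$ extends this along $B^{n_0+1}\hookrightarrow A^{n_0+1}$ to give $s^{n_0+1}$. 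After that the generic inductive step above applies verbatim, completing the homotopy and showing $\mathrm{Hom}_{K(R)}(A^\bullet, I^\bullet)=0$.
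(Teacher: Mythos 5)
Your proposal is correct and follows essentially the same route as the paper's proof: reduce via Proposition \ref{P3.15} to showing that any chain map from a pure acyclic complex into $I^\bullet$ is null-homotopic, then build the homotopy degree by degree from the bottom, using that $\mathrm{Im}(d_A^{k})=\mathrm{Ker}(d_A^{k+1})\hookrightarrow A^{k+1}$ is a pure monomorphism and that each $I^k$ is pure injective to extend the factored map. The only difference is cosmetic: the paper folds your separate base case into the general inductive step (which is vacuous in degrees where $I^\bullet$ vanishes), whereas you spell it out explicitly.
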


\begin{proof} For the sake of definiteness, we suppose that $I^j=0$ for every $j<0$ and $I^0\ne 0$.   Using Proposition \ref{P3.15}, it suffices to show that
$Hom_{K(R)}(A^\bullet,I^\bullet)=0$ for any pure acyclic $(A^\bullet,d_A^\bullet)\in C(R)$. We will show that any morphism
$f:(A^\bullet,d_A^\bullet)\longrightarrow (I^\bullet,d_I^\bullet)$ in $C(R)$ is null-homotopic. For some given
integer $K$, we suppose that we have constructed
maps $t^{k}:A^{k}\longrightarrow I^{k-1}$ for all integers $k\leq K$ such that $t^k\circ d_A^{k-1}+d_I^{k-2}\circ t^{k-1}
=f^{k-1}$. This is already true for $K=0$. We now note that:
\begin{equation*}
(f^k-d_I^{k-1}\circ t^k)\circ d_A^{k-1}=f^k\circ d_A^{k-1}-d_I^{k-1}\circ (t^k\circ d_A^{k-1})
=(f^k\circ d_A^{k-1}-d_I^{k-1}\circ f^{k-1})+d_I^{k-1}\circ d_I^{k-2}\circ t^{k-1}=0
\end{equation*} Hence, the morphism $(f^k-d_I^{k-1}\circ t^k): A^k\longrightarrow I^k$ factors through
$A^k/Im(d_A^{k-1})=A^k/Ker(d_A^k)\cong Im(d_A^k)$. From Definition \ref{pureacy}, it follows that
$Im(d_A^k)\hookrightarrow A^{k+1}$ is a pure monomorphism.  Since $I^k$ is pure injective, the morphism 
$Im(d_A^k)\longrightarrow I^k$ extends to a morphism $t^{k+1}:A^{k+1}\longrightarrow I^k$ that satisfies
$(f^k-d_I^{k-1}\circ t^k)=t^{k+1}\circ d_A^k$. This proves the result. 

\end{proof}

\medskip
As mentioned in Section 2, it is well known (see \cite[Corollary 6]{War}) that any $R$-module $M$ admits a pure 
monomorphism $M\hookrightarrow I$ into a pure injective module $I$. 
We  will now show that any bounded below complex of $R$-modules admits a pure injective resolution. 

\medskip 
\begin{thm}\label{P3.3} Let $M^\bullet\in C(R)$ be a cochain complex of $R$-modules that is bounded below. Then, there exists
a pure quasi-isomorphism $u^\bullet:M^\bullet\longrightarrow I^\bullet$ such that $I^\bullet$ is a bounded
below complex of pure injective modules.
\end{thm}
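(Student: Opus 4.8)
The plan is to run the classical construction of a bounded below injective resolution, but carried out inside the pure exact structure on the category of $R$-modules, with Proposition \ref{P3.2} playing the role that ``a bounded below complex of injectives is $K$-injective'' plays in the classical theory.

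First I would dispose of the module case. By \cite[Corollary 6]{War} every $R$-module $N$ has a pure monomorphism into some $J^{0}\in\mathcal{PI}$; iterating (embed the cokernel purely into $J^{1}\in\mathcal{PI}$, and so on) yields an exact complex $0\to N\to J^{0}\to J^{1}\to\cdots$ which, being spliced together from pure short exact sequences, is pure acyclic in the sense of Definition \ref{pureacy}. Hence $N$, regarded as a complex concentrated in degree $0$, admits a pure quasi-isomorphism to $J^{\bullet}$, and applying $F\otimes_{R}(-)$ for a finitely presented $F$ leaves the augmented complex $[F\otimes_{R}N\to F\otimes_{R}J^{0}\to F\otimes_{R}J^{1}\to\cdots]$ exact.

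Now take $M^{\bullet}$ with, say, $M^{i}=0$ for $i<0$. I would assemble a first-quadrant double complex $(I^{p,q})_{p,q\geq 0}$ with an augmentation $M^{p}\to I^{p,0}$ (and $I^{p,\bullet}=0$ when $M^{p}=0$) in which each column $M^{p}\to I^{p,\bullet}$ is a pure injective resolution of the kind just built, so each column augmentation is a pure quasi-isomorphism and, by Proposition \ref{P3.2}, each $I^{p+1,\bullet}$ is $K$-pure injective. The horizontal differentials come from the comparison theorem: since $I^{p+1,\bullet}$ is $K$-pure injective and the column augmentation $M^{p}\to I^{p,\bullet}$ is a pure quasi-isomorphism, the composite $M^{p}\xrightarrow{d_{M}^{p}}M^{p+1}\to I^{p+1,\bullet}$ lifts through it, uniquely up to homotopy, to a chain map $I^{p,\bullet}\to I^{p+1,\bullet}$, and that uniqueness forces two consecutive lifts to compose to zero up to homotopy; the classical inductive bookkeeping (cf.\ \cite{Murf1}) upgrades these to genuine maps forming a double complex. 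Set $I^{\bullet}:=\mathrm{Tot}(I^{\bullet,\bullet})$. Because the double complex is first-quadrant, each $I^{n}=\bigoplus_{p+q=n}I^{p,q}$ is a \emph{finite} direct sum of pure injectives, hence itself pure injective (for $Hom_{R}^{\bullet}(-,I_{1}\oplus I_{2})\cong Hom_{R}^{\bullet}(-,I_{1})\oplus Hom_{R}^{\bullet}(-,I_{2})$, and a finite sum of functors preserving pure acyclic complexes again preserves them); thus $I^{\bullet}$ is a bounded below complex of pure injectives, and the augmentations assemble into $u^{\bullet}\colon M^{\bullet}\to I^{\bullet}$.

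It remains to see that $u^{\bullet}$ is a pure quasi-isomorphism, and for this it suffices to show $F\otimes_{R}u^{\bullet}$ is a quasi-isomorphism for every finitely presented $F$. Apply $F\otimes_{R}(-)$ to $I^{\bullet,\bullet}$ and run the spectral sequence of the column filtration: its $E_{1}$-term is the vertical cohomology of the complexes $F\otimes_{R}I^{p,\bullet}$, which is $F\otimes_{R}M^{p}$ concentrated in vertical degree $0$ precisely because each column is pure acyclic. Thus $E_{2}^{p,0}=H^{p}(F\otimes_{R}M^{\bullet})$ and $E_{2}^{p,q}=0$ for $q\neq 0$, the spectral sequence degenerates (it converges, being first-quadrant), and the edge map exhibits $F\otimes_{R}u^{\bullet}$ as an isomorphism on cohomology. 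The main obstacle I anticipate is the construction of the double complex with $d^{2}=0$ \emph{on the nose} while keeping every column a genuinely pure-acyclic pure injective resolution: the comparison theorem only delivers these relations up to homotopy, so the strict double complex has to be built by a careful induction on the horizontal degree, exactly as in the classical case but with pure injectives and pure quasi-isomorphisms in place of injectives and quasi-isomorphisms.
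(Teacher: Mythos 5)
Your route is genuinely different from the paper's: you totalize a Cartan--Eilenberg-type double complex whose columns resolve the individual $M^{p}$, whereas the paper never forms a double complex at all --- it builds the single complex $I^{\bullet}$ by induction on the cohomological degree, at stage $k$ taking the pushout $C^{k}$ of $\mathrm{Coker}(e^{k-2})\leftarrow M^{k-1}\rightarrow M^{k}$, embedding $C^{k}$ purely into a pure injective $I^{k}$, and checking by a diagram chase (using that $-\otimes_{R}N$ preserves pushouts and cokernels) that cohomology after tensoring with every $N$ is preserved degree by degree. Your treatment of the module case, the observation that a finite direct sum of pure injectives is pure injective, and the column-filtration spectral sequence argument for the totalization are all sound.

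The gap is exactly where you flag it, and it is not closed by saying one proceeds ``exactly as in the classical case.'' The comparison theorem only gives $d_{h}^{p+1}\circ d_{h}^{p}\simeq 0$ up to homotopy, and the classical device for producing a strict double complex --- the Cartan--Eilenberg construction, which applies the horseshoe lemma to $0\to Z^{p}(M)\to M^{p}\to B^{p+1}(M)\to 0$ and $0\to B^{p}(M)\to Z^{p}(M)\to H^{p}(M)\to 0$ --- is unavailable here: those short exact sequences are not pure in general, so the horseshoe lemma for the pure exact structure does not apply to them. A repair that keeps your strategy is to build the resolution in the $q$-direction instead of the $p$-direction: choose pure monomorphisms $\epsilon^{p}:M^{p}\hookrightarrow J^{p}$ with $J^{p}$ pure injective, set $I^{p,0}:=J^{p}\oplus J^{p+1}$ with horizontal differential $(a,b)\mapsto(b,0)$ and augmentation $m\mapsto(\epsilon^{p}(m),\epsilon^{p+1}(d_{M}^{p}(m)))$; this is a chain map $M^{\bullet}\to I^{\bullet,0}$ that is a pure monomorphism in each degree into a complex of pure injectives, so one may pass to the degreewise cokernel and iterate. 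The resulting $I^{\bullet,\bullet}$ has $d_{h}^{2}=0$ on the nose because each row $I^{\bullet,q}$ is an honest complex, each augmented column is spliced from pure short exact sequences and hence pure acyclic, and your spectral sequence argument then goes through verbatim. As written, though, the step producing the strict double complex would fail.
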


\begin{proof} For the sake of definiteness, we suppose that the complex 
$(M^\bullet,d^\bullet)$ satisfies $M^j=0$ for each $j<0$  but $M^0\ne 0$. We put $I^j=0$
for each $j<0$. We choose a pure monomorphism $u^0:M^0\hookrightarrow I^0$ with $I^0$ pure injective. Then, for every $i<1$, we have already
constructed pure injective modules $I^i$, morphisms $u^i:M^i\longrightarrow I^i$ and differentials $e^{i-1}:I^{i-1}\longrightarrow I^i$ such that we have induced isomorphisms:
\begin{equation}
H^{i-1}(M^\bullet\otimes_R N) \overset{\sim}{\longrightarrow} Ker(e^{i-1}\otimes_RN)/Im(e^{i-2}\otimes_RN)
\qquad\forall\textrm{ }N\in R-Mod
\end{equation} 
and monomorphisms:
\begin{equation}
Coker(d^{i-1}\otimes_RN)
\hookrightarrow Coker(e^{i-1}\otimes_RN)\qquad\forall\textrm{ }N\in R-Mod
\end{equation} We suppose that we have already done this for all $i\in \mathbb Z$ less than some given integer $k\geq 1$. We will now show
that we can choose a pure injective $I^k$, a morphism $u^k:M^k\longrightarrow I^k$  and a differential $e^{k-1}:I^{k-1}\longrightarrow I^k$ such that: 
\begin{equation}
\begin{array}{c}
H^{k-1}(M^\bullet\otimes_RN) \overset{\sim}{\longrightarrow} Ker(e^{k-1}\otimes_RN)/Im(e^{k-2}\otimes_RN)
\qquad\forall\textrm{ }N\in R-Mod\\
Coker(d^{k-1}\otimes_RN)
\hookrightarrow Coker(e^{k-1}\otimes_RN)\qquad\forall\textrm{ }N\in R-Mod\\
\end{array}
\end{equation} For this we consider the colimit $C^k$ defined by the following pushout square:
\begin{equation}
\begin{CD}
M^{k-1}@>d^{k-1}>> M^k \\
@VVV @VVV \\
Coker(e^{k-2}) @>>> C^k\\
\end{CD}
\end{equation} and choose a pure monomorphism $C^k\hookrightarrow I^k$ with $I^k$ pure injective. Then, for any 
$N\in R-Mod$, $C^k\otimes_RN\longrightarrow I^k\otimes_RN$ is a monomorphism and the following square is
still a pushout:
\begin{equation}\label{3.6y}
\begin{CD}
M^{k-1}\otimes_RN@>d^{k-1}\otimes_RN>> M^k\otimes_RN \\
@VVV @VVV \\
Coker(e^{k-2})\otimes_RN=Coker(e^{k-2}\otimes_RN) @>>> C^k\otimes_RN\\
\end{CD}
\end{equation} We now define $e^{k-1}:I^{k-1}\longrightarrow I^k$ to be the composition
$I^{k-1}\longrightarrow Coker(e^{k-2})\longrightarrow C^k\hookrightarrow I^k$ and $u^k:M^k\longrightarrow 
I^k$ to be the composition
$M^k\longrightarrow C^k\longrightarrow I^k$. Applying the dual of 
\cite[Lemma 68]{Murf1} to the pushout square \eqref{3.6y} along with the monomorphism 
$ C^k\otimes_RN\hookrightarrow I^k\otimes_RN$ gives us a monomorphism $Coker(d^{k-1}\otimes_RN)
\hookrightarrow Coker(e^{k-1}\otimes_RN)$.

\medskip 
We now put $d^j_N:=d^j\otimes_RN$ and $e^j_N:=e^j\otimes_RN$ for any integer $j$ and any $R$-module $N$. 
Since the morphisms 
$M^{k-1}\otimes_RN\longrightarrow M^k\otimes_RN$ and $M^{k-1}\otimes_RN\longrightarrow Coker(e^{k-2}\otimes_RN)$
both factor through the epimorphism $M^{k-1}\otimes_RN\longrightarrow (M^{k-1}\otimes_RN)/Im(d^{k-2}_N)$, we can
simply replace $M^{k-1}\otimes_RN$ by $(M^{k-1}\otimes_RN)/Im(d^{k-2}_N)$ in \eqref{3.6y} and still obtain a pushout square. Now if
we let $C_N$ be the colimit of the system $Coker(e^{k-2}_N)\longleftarrow (M^{k-1}\otimes_RN)/Im(d^{k-2}_N)\longrightarrow Im(d^{k-1}_N)$, we obtain the following commutative diagram:
\begin{equation}
\begin{CD}
(M^{k-1}\otimes_RN)/Im(d^{k-2}_N)=Coker(d^{k-2}_N)@>>> Im(d^{k-1}_N) @>>> M^k\otimes_RN \\
@VVV @VVV @VVV \\
Coker(e^{k-2}_N) @>>> C_N  @>>> C^k\otimes_RN \\
\end{CD}
\end{equation} where all the squares are pushouts. The pushout of the epimorphism
$(M^{k-1}\otimes_RN)/Im(d^{k-2}_N)\longrightarrow Im(d^{k-1}_N)$ gives an epimorphism
$Coker(e^{k-2}_N)\twoheadrightarrow C_N $. On the other hand, since $R-Mod$ is an abelian
category, it follows that the pushout of the monomorphism $Im(d^{k-1}_N)\hookrightarrow M^k\otimes_RN$
gives a monomorphism $C_N\hookrightarrow C^k\otimes_RN$. Accordingly, the morphism $e^{k-1}_N:I^{k-1}\otimes_RN
\longrightarrow I^k\otimes_RN$ can be factored as the epimorphism:
\begin{equation}\label{3.8y}
I^{k-1}\otimes_RN\twoheadrightarrow (I^{k-1}\otimes_RN)/Im(e^{k-2}_N)=Coker(e^{k-2}_N)\twoheadrightarrow C_N
\end{equation} followed by the monomorphism:
\begin{equation}\label{3.9y}
C_N\hookrightarrow C^k\otimes_RN\hookrightarrow I^k\otimes_RN
\end{equation} From \eqref{3.8y} and \eqref{3.9y} it follows that $C_N\cong (I^{k-1}\otimes_RN)/Ker(e^{k-1}_N)
=Im(e^{k-1}_N)$. By assumption, we have a monomorphism 
$Coker(d^{k-2}_N)
\hookrightarrow Coker(e^{k-2}_N)$.  This gives us the following pushout square:
\begin{equation}\label{3.10y}
\begin{CD}
(M^{k-1}\otimes_RN)/Im(d^{k-2}_N)=Coker(d^{k-2}_N)@>epic>> Im(d^{k-1}_N)
=(M^{k-1}\otimes_RN)/Ker(d^{k-1}_N) \\
@VmonicVV @VVV  \\
(I^{k-1}\otimes_RN)/Im(e^{k-2}_N)=Coker(e^{k-2}_N) @>epic>> Im(e^{k-1}_N) =(I^{k-1}\otimes_RN)/Ker(e^{k-1}_N)  \\
\end{CD}
\end{equation} The fact that \eqref{3.10y} is a pushout square and that $Coker(d^{k-2}_N)
\longrightarrow Coker(e^{k-2}_N)$ is a monomorphism shows that we have an isomorphism 
of the kernels of the two horizontal morphisms. It is also clear that the kernels of these horizontal
morphisms are $H^{k-1}(M^\bullet\otimes_RN)$ and $Ker(e^{k-1}\otimes_RN)/Im(e^{k-2}\otimes_RN)$ respectively.
Thus, we can construct inductively a pure quasi-isomorphism $M^\bullet\longrightarrow I^\bullet$ where
$I^\bullet$ is a bounded below complex of pure injectives.  

\end{proof}

\medskip
\begin{thm}\label{P3.5} Let $f^\bullet:M_2^\bullet\longrightarrow M_1^\bullet $ be a morphism of bounded below complexes
in $C(R)$. Let $u^\bullet_1: M_1^\bullet\longrightarrow I_1^\bullet$ be a pure quasi-isomorphism from
$M_1^\bullet$ to a bounded below complex $I_1^\bullet$ of pure injectives. Then, there exists a bounded below complex $I_2^\bullet$ of pure injectives,  a pure quasi-isomorphism
$u_2^\bullet: M_2^\bullet\longrightarrow I_2^\bullet$  and a morphism $g^\bullet:I_2^\bullet\longrightarrow I_1^\bullet$  
fitting into a commutative diagram: 
\begin{equation}\label{tr3.12}
\begin{CD}
M_2^\bullet @>u_2^\bullet>> I_2^\bullet \\
@Vf^\bullet VV @Vg^\bullet VV\\
M_1^\bullet @>u_1^\bullet>>  I_1^\bullet\\
\end{CD}
\end{equation}
\end{thm}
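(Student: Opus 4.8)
The plan is to imitate the inductive construction in the proof of Proposition \ref{P3.3}: I will build the bounded below complex of pure injectives $I_2^\bullet$ together with the pure quasi-isomorphism $u_2^\bullet:M_2^\bullet\longrightarrow I_2^\bullet$ exactly as there, while \emph{simultaneously} constructing the maps $g^k:I_2^k\longrightarrow I_1^k$ by feeding the pushout squares that define the terms $I_2^k$ into the pure injectivity of the modules $I_1^k$. As in Proposition \ref{P3.3} I may assume $M_2^j=0$ for $j<0$, and I set $I_2^j=0$ and $g^j=0$ for $j<0$; note that $f^j=0$, hence $u_1^jf^j=0$, wherever $M_2^j=0$, while $I_1^\bullet$ is merely allowed to be nonzero in low degrees, which is harmless since $g^\bullet$ is simply zero there. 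For the base case I take $C_2^0=M_2^0$, pick a pure monomorphism $M_2^0\hookrightarrow I_2^0$ with $I_2^0$ pure injective, and extend $u_1^0f^0:M_2^0\longrightarrow I_1^0$ along it to a map $g^0:I_2^0\longrightarrow I_1^0$, using that $I_1^0$ is pure injective.

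For the inductive step, suppose that for all $j\leq k-1$ (with $k\geq1$) I have constructed pure injectives $I_2^j$, differentials $e_2^{j-1}$ and maps $u_2^j:M_2^j\longrightarrow I_2^j$ exactly as in the proof of Proposition \ref{P3.3}---so that in particular $I_2^j$ contains the relevant colimit $C_2^j$ through a \emph{pure} monomorphism $C_2^j\hookrightarrow I_2^j$---together with maps $g^j:I_2^j\longrightarrow I_1^j$ forming a chain map as far as defined and satisfying $g^ju_2^j=u_1^jf^j$. Running the Proposition \ref{P3.3} step first yields $I_2^k$, $e_2^{k-1}$ and $u_2^k$, where $C_2^k$ is the pushout of $M_2^k\longleftarrow M_2^{k-1}\longrightarrow Coker(e_2^{k-2})$ (the second arrow being $u_2^{k-1}$ followed by the quotient), and $C_2^k\hookrightarrow I_2^k$ is a pure monomorphism into a pure injective. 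To produce $g^k$, I will glue the two maps into $I_1^k$ given by $\phi:=u_1^kf^k:M_2^k\longrightarrow I_1^k$ and by
\[
\psi\colon\; Coker(e_2^{k-2})\;\overset{\bar{g}^{k-1}}{\longrightarrow}\;Coker(e_1^{k-2})\;\overset{\bar{e}_1^{k-1}}{\longrightarrow}\;I_1^k,
\]
where $\bar{g}^{k-1}$ is the map on cokernels induced by $g^{k-1}$ (legitimate since $g^\bullet$ is a chain map in the constructed range) and $\bar{e}_1^{k-1}$ is the map induced by the differential $e_1^{k-1}$ (legitimate since $I_1^\bullet$ is a complex). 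A short diagram chase using that $f^\bullet$ and $u_1^\bullet$ are chain maps together with the inductive relation $g^{k-1}u_2^{k-1}=u_1^{k-1}f^{k-1}$ shows that $\phi$ and $\psi$ agree on $M_2^{k-1}$, so by the universal property of the pushout they glue to $\theta^k:C_2^k\longrightarrow I_1^k$; since $C_2^k\hookrightarrow I_2^k$ is a pure monomorphism and $I_1^k$ is pure injective, $\theta^k$ extends to the required $g^k:I_2^k\longrightarrow I_1^k$.

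Finally I will check that $g^\bullet$ is a chain map and that $g^\bullet u_2^\bullet=u_1^\bullet f^\bullet$; both are immediate from the factorizations built into the construction, namely that $u_2^k$ factors as $M_2^k\longrightarrow C_2^k\hookrightarrow I_2^k$ and that $e_2^{k-1}$ factors through $Coker(e_2^{k-2})\longrightarrow C_2^k\hookrightarrow I_2^k$, so that $g^ku_2^k$ is the restriction of $\theta^k$ to $M_2^k$ (that is, $\phi$) and $g^ke_2^{k-1}$ unwinds, via the definitions of $\bar{g}^{k-1}$ and $\bar{e}_1^{k-1}$, to $e_1^{k-1}g^{k-1}$. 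That $u_2^\bullet$ is a pure quasi-isomorphism is then exactly the conclusion of Proposition \ref{P3.3}, since all the $M_2^\bullet$-side data are unchanged. I expect the only genuine obstacle to be the degreewise pushout-compatibility step (checking $\phi$ and $\psi$ agree on $M_2^{k-1}$) together with the bookkeeping of the induced maps on cokernels; the extension producing $g^k$ is then a routine appeal to pure injectivity. I would also note a shorter route giving a weaker statement: by Proposition \ref{P3.2} the complex $I_1^\bullet$ is $K$-pure injective, so if $u_2^\bullet:M_2^\bullet\longrightarrow I_2^\bullet$ is any bounded below pure injective resolution obtained from Proposition \ref{P3.3}, then applying $Hom_{K(R)}(\_\_,I_1^\bullet)$ to the triangle $M_2^\bullet\overset{u_2^\bullet}{\longrightarrow}I_2^\bullet\longrightarrow C_{u_2}^\bullet$ (whose third term is pure acyclic) shows that $Hom_{K(R)}(u_2^\bullet,I_1^\bullet)$ is an isomorphism, so $u_1^\bullet f^\bullet$ lifts to some $g^\bullet$ with $g^\bullet u_2^\bullet=u_1^\bullet f^\bullet$ only up to homotopy; this is why I carry out the explicit inductive construction when the strict commutativity of \eqref{tr3.12} is wanted.
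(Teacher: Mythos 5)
Your proof is correct, but it takes a genuinely different route from the paper's. The paper forms the mapping cone $C_{u_1f}^\bullet$ of $u_1^\bullet\circ f^\bullet$, resolves it by Proposition \ref{P3.3} via a pure quasi-isomorphism $v^\bullet:C_{u_1f}^\bullet\longrightarrow I^\bullet$, sets $I_2^\bullet:=C_h[-1]^\bullet$ for the composite $h^\bullet:I_1^\bullet\longrightarrow C_{u_1f}^\bullet\overset{v^\bullet}{\longrightarrow}I^\bullet$, obtains $u_2^\bullet$ as a filler between the two distinguished triangles in $K(R)$, and verifies that it is a pure quasi-isomorphism by tensoring both triangles with an arbitrary module $N$ and using the two-out-of-three property. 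You instead interleave the degreewise pushout construction of Proposition \ref{P3.3} with an inductive construction of $g^k$, gluing $u_1^kf^k$ with $e_1^{k-1}g^{k-1}$ (factored through $Coker(e_2^{k-2})$) over the pushout $C_2^k$ and then extending along the pure monomorphism $C_2^k\hookrightarrow I_2^k$ using pure injectivity of $I_1^k$; your compatibility check, which reduces to $u_1^kf^kd_2^{k-1}=e_1^{k-1}u_1^{k-1}f^{k-1}=e_1^{k-1}g^{k-1}u_2^{k-1}$, is exactly right, as is the verification of the chain-map identity $g^ke_2^{k-1}=e_1^{k-1}g^{k-1}$. The trade-off: your construction makes the square \eqref{tr3.12} commute strictly in $C(R)$, whereas the cone argument as written only gives commutativity in $K(R)$, the filler from the triangle axioms being unique and commuting only up to homotopy; on the other hand, the paper's choice $I_2^\bullet=C_h[-1]^\bullet$ comes equipped with a canonical degreewise split epimorphism onto $I_1^\bullet$ whose kernel is again a bounded below complex of pure injectives, which is precisely the ``special inverse system'' structure needed in Proposition \ref{P3.9} (where the author repeats the cone construction with an explicit formula for the filler to recover strict commutativity). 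Your $g^\bullet$ need not be surjective or semi-split, so it would not feed directly into Proposition \ref{P3.9}, but nothing in the statement of the present proposition requires that. Your closing observation, that $K$-pure injectivity of $I_1^\bullet$ alone yields a $g^\bullet$ with $g^\bullet u_2^\bullet\sim u_1^\bullet f^\bullet$ only up to homotopy, is also accurate.
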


\begin{proof} It is clear that the mapping cone $C_{u_1f}^\bullet$ of the composition 
$u_1^\bullet\circ f^\bullet : M_2^\bullet\longrightarrow I_1^\bullet$ is a bounded below complex in
$C(R)$. Applying Proposition \ref{P3.3}, we choose a pure quasi-isomorphism $v^\bullet:C^\bullet_{u_1f}
\longrightarrow I^\bullet$ to a bounded below complex of pure injectives. Thereafter, we consider
the composition $h^\bullet:I_1^\bullet\longrightarrow C_{u_1f}^\bullet\overset{v^\bullet}{\longrightarrow}I^\bullet$
and its mapping cone $C^\bullet_h$. We now have the following commutative diagram in $K(R)$:
\begin{equation}\label{tr3.13}
    \xymatrix{
      M_2^\bullet \ar[r]^{u_1^\bullet\circ f^\bullet }\ar@{.>}[d]^{u_2^\bullet} &  I_1^\bullet \ar[r] \ar[d]_1 &  C_{u_1f}^\bullet \ar[d]^{v^\bullet} \ar[r] &  M_2[1]^\bullet \ar@{.>}[d]^{u_2[1]^\bullet} \\
      C_h[-1]^\bullet  \ar[r]^{g^\bullet} & I_1^\bullet \ar[r]_{h^\bullet}       & I^\bullet \ar[r] & C_h^\bullet \\ }
\end{equation}
Since the horizontal rows in \eqref{tr3.13} are distinguished triangles, the triangulated structure
on $K(R)$ implies that we have a   morphism $u_2^\bullet:M_2^\bullet\longrightarrow C_h[-1]^\bullet$ making
the diagram commute. It is clear that $C_h[-1]^\bullet$ is a bounded below complex of pure injectives and 
we set $I_2^\bullet:=C_h[-1]^\bullet$. Now, for any module $N\in R-Mod$, we have an induced commutative
diagram:
\begin{equation}\label{tr3.14}
    \xymatrix{
     N\otimes_R M_2^\bullet \ar[r]^{N\otimes_R(u_1^\bullet\circ f^\bullet) } \ar[d]^{N\otimes_Ru_2^\bullet} &  N\otimes_RI_1^\bullet \ar[r] \ar[d]_1 &  N\otimes_RC_{u_1f}^\bullet \ar[d]^{N\otimes_Rv^\bullet} \ar[r] &  (N\otimes_RM_2^\bullet)[1] \ar[d]^{ (N\otimes_Ru_2^\bullet)[1]} \\
     N\otimes_RI_2^\bullet=N\otimes_R C_h[-1]^\bullet   \ar[r]^>{N\otimes_Rg^\bullet\qquad } & N\otimes_RI_1^\bullet \ar[r]_{N\otimes_Rh^\bullet}       & N\otimes_R I^\bullet \ar[r] & 
(N\otimes_RI_2^\bullet)[1] \\ }
\end{equation} Since $v^\bullet$ is a pure quasi-isomorphism, $N\otimes_R v^\bullet$ is a quasi-isomorphism. 
Since the mapping cone commutes with the functor $N\otimes_R\_\_$,  the horizontal rows in \eqref{tr3.14} are still distinguished
triangles. Now, since
$1:N\otimes_RI_1^\bullet\longrightarrow N\otimes_RI_1^\bullet$ and $N\otimes_R v^\bullet:
 N\otimes_RC_{u_1f}^\bullet \longrightarrow N\otimes_RI^\bullet$ are quasi-isomorphisms (and hence isomorphisms
in the derived category $D(R)$), the third morphism $N\otimes_Ru_2^\bullet:N\otimes_RM_2^\bullet\longrightarrow 
N\otimes_RI_2^\bullet$ is also a quasi-isomorphism (see \cite[Tag 014A]{Stacks}). Hence, $u_2^\bullet:M_2^\bullet
\longrightarrow I_2^\bullet$ is a pure quasi-isomorphism that fits into the commutative square  \eqref{tr3.12}. 

\end{proof}

\medskip
We will now proceed to construct pure injective resolutions for arbitrary, unbounded complexes of $R$-modules. We will
first need the notion of a ``special inverse system''. 

\medskip
\begin{defn}\label{spinv} (see \cite[Definition 2.1]{Sp})  Let $\mathcal T$ be a class of complexes in $C(R)$ that
is closed under isomorphisms.

\medskip

 (a) A $\mathcal T$-special inverse system
of complexes  is an inverse system  $\{I_n^\bullet\}_{n\in \mathbb Z}$ of complexes in $C(R)$ satisfying the 
following conditions for each $n\in \mathbb Z$:

\medskip
(1)  The cochain map $I_n^\bullet\longrightarrow I^\bullet_{n-1}$ is surjective. 

\smallskip
(2) The kernel $K_n^\bullet:=Ker(I_n^\bullet\longrightarrow I_{n-1}^\bullet)$
lies in the class $\mathcal T$. 

\smallskip
(3) The short exact sequence of complexes:
\begin{equation}
0\longrightarrow K^\bullet_n \overset{i_n^\bullet}{\longrightarrow} I_n^\bullet\overset{p^\bullet_n}{\longrightarrow} I_{n-1}^\bullet \longrightarrow 0
\end{equation} is ``semi-split'', i.e., it is split in each degree. 

\medskip
(b) The class $\mathcal T\subseteq C(R)$ is said to be closed under special inverse limits if the inverse limit 
of every
$\mathcal T$-special inverse system in $C(R)$  is contained in $\mathcal T$. 

\end{defn}

\medskip
By slight abuse of notation, we will refer to  $\{I_n^\bullet\}_{n\geq 0}$  as a $\mathcal T$-special inverse system  if setting 
$I_n^\bullet=0$  for all $n<0$ makes 
  $\{I_n^\bullet\}_{n\in \mathbb Z}$ into a $\mathcal T$-special inverse system  in the sense of Definition \ref{spinv} above.

\medskip
\begin{thm}\label{P3.8t} (a) Let $\mathcal C\subseteq C(R)$ be a class of complexes. Let $\mathcal T(\mathcal C)$ denote the class of 
complexes  $M^\bullet\in C(R)$  such that $Hom_R^\bullet(A^\bullet,M^\bullet)$ is pure acyclic for
each $A^\bullet \in \mathcal C$. Then, $\mathcal T(\mathcal C)$ is closed under special inverse limits. 

\medskip
(b) The class of all $K$-pure injective complexes is closed under special inverse limits. 
\end{thm}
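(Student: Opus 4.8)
The plan is to prove part (a) directly from the definitions and then obtain part (b) as an immediate corollary. For part (a), let $\{M_n^\bullet\}_{n\in\mathbb Z}$ be a $\mathcal T(\mathcal C)$-special inverse system with inverse limit $M^\bullet=\varprojlim M_n^\bullet$, and let $A^\bullet\in\mathcal C$. I must show $Hom_R^\bullet(A^\bullet,M^\bullet)$ is pure acyclic. The first observation is that, because the transition maps $M_n^\bullet\to M_{n-1}^\bullet$ are degreewise split surjections with kernels $K_n^\bullet\in\mathcal T(\mathcal C)$, applying the functor $Hom_R^\bullet(A^\bullet,-)$ (which preserves products, hence inverse limits, and preserves degreewise-split short exact sequences) yields that $\{Hom_R^\bullet(A^\bullet,M_n^\bullet)\}_n$ is again a special inverse system, with $Hom_R^\bullet(A^\bullet,M^\bullet)=\varprojlim Hom_R^\bullet(A^\bullet,M_n^\bullet)$ and with kernels $Hom_R^\bullet(A^\bullet,K_n^\bullet)$, each of which is pure acyclic by definition of $\mathcal T(\mathcal C)$. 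So the problem reduces to the following statement: the inverse limit of a special inverse system of pure acyclic complexes (with pure acyclic kernels) is pure acyclic. This is precisely the pure-exact analogue of \cite[Lemma~2.2]{Sp} (or the dual of the statement for special direct limits), and I would prove it by reducing to ordinary acyclicity after tensoring: for each finitely presented $F$, $Hom_R^\bullet(F,-)$ again preserves the degreewise-split short exact sequences and the inverse limit, so it suffices to show the inverse limit of a special inverse system of acyclic complexes with acyclic degreewise-split kernels is acyclic. That last fact follows from the Mittag-Leffler criterion: in each cohomological degree $i$, the system $\{Z^i(M_n^\bullet)\}$ of cocycles and the system $\{B^i(M_n^\bullet)\}$ of coboundaries have surjective transition maps (because the kernels $K_n^\bullet$ are acyclic and the sequences are degreewise split, so one can lift cocycles and coboundaries), hence $\varprojlim^1$ vanishes, and one gets $H^i(\varprojlim M_n^\bullet)=\varprojlim H^i(M_n^\bullet)=0$.

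The one point requiring care is that the relevant sub- and quotient-complexes of a special inverse system of acyclic complexes still have \emph{surjective} transition maps. Here I would use that each short exact sequence $0\to K_n^\bullet\to M_n^\bullet\to M_{n-1}^\bullet\to 0$ is split in each degree, so that in degree $i$ we have $M_n^i\cong M_{n-1}^i\oplus K_n^i$ compatibly with differentials up to the usual triangular correction; combined with acyclicity of $K_n^\bullet$, a diagram chase shows that $Z^i(M_n^\bullet)\to Z^i(M_{n-1}^\bullet)$ and $B^i(M_n^\bullet)\to B^i(M_{n-1}^\bullet)$ are onto. This is the technical heart of \cite[Lemma~2.2]{Sp}, and I would either cite it directly in the form needed or reproduce the short argument. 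I expect this bookkeeping with the split structure to be the main (though not deep) obstacle; everything else is formal.

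Finally, part (b) follows by specialization. By Proposition~\ref{P3.15}, $M^\bullet$ is $K$-pure injective if and only if $Hom_{K(R)}(A^\bullet,M^\bullet)=0$ for every pure acyclic $A^\bullet$; equivalently, by \eqref{hom}, if and only if $Hom_R^\bullet(A^\bullet,M^\bullet)$ is acyclic for every pure acyclic $A^\bullet$. But the definition of $K$-pure injectivity in fact gives pure acyclicity of $Hom_R^\bullet(A^\bullet,M^\bullet)$, so the class of $K$-pure injective complexes is exactly $\mathcal T(\mathcal C)$ for $\mathcal C$ the class of all pure acyclic complexes. Hence part (b) is the case $\mathcal C=\{\text{pure acyclic complexes}\}$ of part (a), and we are done.
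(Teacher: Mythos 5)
Your proposal is correct and takes essentially the same route as the paper: both reduce pure acyclicity of $Hom_R^\bullet(A^\bullet,M^\bullet)$ to ordinary acyclicity via the adjunction $Hom_R^\bullet(F\otimes_RA^\bullet,M^\bullet)\cong Hom_R^\bullet(F,Hom_R^\bullet(A^\bullet,M^\bullet))$ and then rely on Spaltenstein's closure of Hom-orthogonality classes under special inverse limits, with (b) obtained by specializing $\mathcal C$ to the pure acyclic complexes. The only difference is cosmetic: the paper cites Spaltenstein's Corollary 2.5 as a black box, whereas you unwind it into the underlying Mittag--Leffler argument.
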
 

\begin{proof} (a) We begin by setting:
\begin{equation*}
\mathcal T':=\{\mbox{$B^\bullet\in C(R)$ $\vert$ $B^\bullet=F\otimes_RA^\bullet$ for some finitely presented
$R$-module $F$ and some $A^\bullet\in \mathcal C$}\}
\end{equation*} Now since $Hom_R^\bullet(F\otimes_RA^\bullet,M^\bullet)
\cong Hom_R^\bullet (F,Hom_R^\bullet(A^\bullet,M^\bullet))$, a complex $M^\bullet\in \mathcal T(\mathcal C)$
if and only if $Hom_R^\bullet(B^\bullet,M^\bullet)$ is acyclic for each $B^\bullet\in \mathcal T'$. It now follows
from \cite[Corollary 2.5]{Sp} that $\mathcal T(\mathcal C)$ is closed under special inverse limits. The result of 
(b) follows directly from (a) by taking $\mathcal C$ to be the class of all pure acyclic complexes
in $C(R)$. 

\end{proof}

\medskip
Given a complex $(M^\bullet,d^\bullet)\in C(R)$, we recall that for any $n\in \mathbb Z$, its truncation $\tau^{\geq n}M^\bullet$ is given by setting 
\begin{equation}\label{tunc}
(\tau^{\geq n}M)^i=\left\{
\begin{array}{ll}
M^i & \mbox{if $i>n$}\\
Coker(d^{n-1}) & \mbox{if $i=n$} \\
0 & \mbox{if $i<n$} 
\end{array}\right.
\end{equation} Then, it is clear that $H^i(\tau^{\geq n}M^\bullet)=H^i(M^\bullet)$ for all $i\geq n$ and 
$H^i(\tau^{\geq n}M^\bullet)=0$ otherwise. Further,   the canonical morphisms 
$\tau^{\geq n-1}M^\bullet\longrightarrow \tau^{\geq n}M^\bullet$ can be used to express
$M^\bullet$ as an inverse limit $M^\bullet =\underset{n\geq 0}{\varprojlim}\textrm{ }\tau^{\geq -n}M^\bullet$. 

\medskip

\begin{thm}\label{P3.9} For any complex $M^\bullet\in C(R)$ there exists a special inverse system $\{I_n^\bullet\}_{n\geq 0}$
of $K$-pure injective complexes and a morphism $\{f_n:\tau^{\geq -n}M^\bullet\longrightarrow I_n^\bullet\}_{n\geq 0}$ 
of inverse systems 
satisfying the following conditions:

\medskip
(a) Each $I_n^\bullet$ is a bounded below complex of pure injectives. 

\medskip
(b) Each $f_n$ is a pure quasi-isomorphism. 
\end{thm}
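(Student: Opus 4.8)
The plan is to construct the two inverse systems $\{(I_n^\bullet,f_n)\}_{n\geq 0}$ by induction on $n$, using Proposition~\ref{P3.3} to start, Proposition~\ref{P3.5} to pass from stage $n$ to stage $n+1$, and a standard ``contractible disk'' correction to force the transition maps $I_{n+1}^\bullet\to I_n^\bullet$ to be surjective. For the base case, $\tau^{\geq 0}M^\bullet$ is bounded below, so Proposition~\ref{P3.3} gives a pure quasi-isomorphism $f_0\colon\tau^{\geq 0}M^\bullet\to I_0^\bullet$ with $I_0^\bullet$ a bounded below complex of pure injectives, which is $K$-pure injective by Proposition~\ref{P3.2}. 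Under the convention that $I_n^\bullet=0$ for $n<0$, the conditions of Definition~\ref{spinv} at $n=0$ hold trivially.

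For the inductive step, assume $f_n\colon\tau^{\geq -n}M^\bullet\to I_n^\bullet$ has been produced, with $I_n^\bullet$ a bounded below complex of pure injectives and $f_n$ a pure quasi-isomorphism. Applying Proposition~\ref{P3.5} to the canonical truncation morphism $\tau^{\geq -(n+1)}M^\bullet\to\tau^{\geq -n}M^\bullet$ and to the pure quasi-isomorphism $f_n$ yields a bounded below complex $J^\bullet$ of pure injectives, a pure quasi-isomorphism $h^\bullet\colon\tau^{\geq -(n+1)}M^\bullet\to J^\bullet$, and a morphism $g^\bullet\colon J^\bullet\to I_n^\bullet$ with $g^\bullet\circ h^\bullet$ equal to $f_n$ composed with the truncation map. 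Since $g^\bullet$ need not be surjective, let $E^\bullet:=C_{1_{I_n^\bullet}}^\bullet$ be the mapping cone of the identity of $I_n^\bullet$, a contractible complex, and set $I_{n+1}^\bullet:=J^\bullet\oplus E[-1]^\bullet$. The complex $E[-1]^\bullet$ is contractible, bounded below, degreewise a finite direct sum of pure injectives, and carries the degreewise-split epimorphism $\pi^\bullet\colon E[-1]^\bullet\to I_n^\bullet$ obtained by shifting the canonical projection $C_f^\bullet\to M[1]^\bullet$ recalled above. Put $f_{n+1}^\bullet:=(h^\bullet,0)\colon\tau^{\geq -(n+1)}M^\bullet\to I_{n+1}^\bullet$ and $p_n^\bullet:=(g^\bullet,\pi^\bullet)\colon I_{n+1}^\bullet\to I_n^\bullet$.

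It then remains to verify the required properties. Being bounded below and a finite direct sum of pure injectives in each degree, $I_{n+1}^\bullet$ is $K$-pure injective by Proposition~\ref{P3.2}. A contractible complex remains contractible after applying $N\otimes_R(\_\_)$ and is therefore pure acyclic, so the cone of $f_{n+1}^\bullet$, which is $\mathrm{Cone}(h^\bullet)^\bullet\oplus E^\bullet$, is a direct sum of pure acyclic complexes, hence pure acyclic; thus $f_{n+1}^\bullet$ is a pure quasi-isomorphism, giving (b). The identity $p_n^\bullet\circ f_{n+1}^\bullet=g^\bullet\circ h^\bullet=f_n\circ(\text{truncation})$ shows $\{f_n\}$ is a morphism of inverse systems. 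Finally, $p_n^\bullet$ is surjective because $\pi^\bullet$ is, and split in each degree because a degreewise section of $\pi^\bullet$ yields one of $p_n^\bullet$; this gives conditions (1) and (3) of Definition~\ref{spinv}. And $K_{n+1}^\bullet:=\mathrm{Ker}(p_n^\bullet)$ is, in each degree, a direct summand of the pure injective module $I_{n+1}^k$ and hence itself pure injective (direct summands of pure injectives are pure injective, immediately from the definition), so $K_{n+1}^\bullet$ is a bounded below complex of pure injectives and thus $K$-pure injective by Proposition~\ref{P3.2}; this is condition (2). This completes the induction.

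I expect the only genuinely delicate point to be the surjectivity correction. Replacing $J^\bullet$ by the mapping cylinder of $g^\bullet$ would not work, because $g^\bullet$ is in general not a pure quasi-isomorphism (the truncation map $\tau^{\geq -(n+1)}M^\bullet\to\tau^{\geq -n}M^\bullet$ is not one), so the inclusion $J^\bullet\hookrightarrow\mathrm{Cyl}(g^\bullet)$ need not be one either; adjoining the contractible disk $E[-1]^\bullet$ instead makes $p_n^\bullet$ surjective while leaving the pure-quasi-isomorphism property of $f_{n+1}^\bullet$ untouched.
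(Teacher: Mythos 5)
Your overall strategy (induct on $n$, resolve the next truncation compatibly, then adjoin a contractible disk so that the transition maps become degreewise split surjections) is sound, and your verifications concerning the contractible summand, the kernels, and conditions (1)--(3) of Definition \ref{spinv} are fine. There is, however, one genuine gap: you assert that Proposition \ref{P3.5} furnishes $g^\bullet\colon J^\bullet\to I_n^\bullet$ with $g^\bullet\circ h^\bullet$ \emph{equal} to $f_n$ composed with the truncation map $t$. The proof of Proposition \ref{P3.5} produces $u_2^\bullet$ from the triangulated-category axioms applied in $K(R)$, so the square \eqref{tr3.12} commutes only up to homotopy. Strict commutativity is exactly what is needed here: the statement (and its use in the proof of Proposition \ref{Thinj}) requires an honest morphism of inverse systems in $C(R)$, so that one can pass to inverse limits and obtain a chain map $M^\bullet\to I^\bullet$; squares commuting only in $K(R)$ do not assemble into a map of limits. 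This is precisely why the paper's own proof does not invoke Proposition \ref{P3.5} but instead redoes the construction explicitly: it sets $I_n^\bullet:=C_{-g''}[-1]^\bullet$ (so $I_n^i=I_{n-1}^i\oplus J^{i-1}$), defines $h^i(n)=(f(n),g'(n))$ by an explicit formula, and checks by hand that $h^\bullet$ is a chain map whose cone is $C_{-g}[-1]^\bullet$. With that choice the transition map is the canonical cone projection, hence automatically surjective and semi-split with kernel $J[-1]^\bullet$, and the compatibility square commutes on the nose, so no separate correction is needed.

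The gap is repairable inside your framework, and your contractible summand is in fact the right tool for it: a chain map $e\colon\tau^{\geq -(n+1)}M^\bullet\to C_{1_{I_n}}[-1]^\bullet$ is the same datum as a null-homotopic map $e_1$ to $I_n^\bullet$ together with a chosen null-homotopy. Writing $g^\bullet\circ h^\bullet-f_n\circ t=ds+sd$ for a homotopy $s$, replace $f_{n+1}=(h^\bullet,0)$ by $f_{n+1}=(h^\bullet,e)$ with $e=(f_n\circ t-g^\bullet\circ h^\bullet,\ s)$ (up to the sign conventions for the cone differential); then $p_n^\bullet\circ f_{n+1}=f_n\circ t$ holds strictly, and $f_{n+1}$ is still a pure quasi-isomorphism because it is homotopic to $(h^\bullet,0)$ (any map into a contractible complex is null-homotopic, and pure acyclicity of the cone is invariant under homotopy equivalence). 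With that correction your argument goes through and becomes an equivalent, if slightly less economical, packaging of the paper's construction.
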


\begin{proof}  Using Proposition \ref{P3.3}, we choose a pure quasi-isomorphism $f_0:\tau^{\geq 0}M^\bullet
\longrightarrow I_0^\bullet$ with $I_0^\bullet$ a bounded below complex of pure injectives. For some $n\geq 1$,
we assume that we have already chosen pure quasi-isomorphisms $f_j:\tau^{\geq -j}M^\bullet 
\longrightarrow I_j^\bullet$ for all $0\leq j\leq n-1$ satisfying the required conditions. We now set:
\begin{equation}
\begin{CD}
I^\bullet:=I_{n-1}^\bullet\qquad N^\bullet:=\tau^{\geq -n}M^\bullet \qquad f:N^\bullet=\tau^{\geq -n}M^\bullet
\longrightarrow \tau^{\geq -n+1}M^\bullet @>{f_{n-1}}>> I_{n-1}^\bullet=I^\bullet
\end{CD}
\end{equation} We let $C_f^\bullet$ denote the cone of $f$. Again using Proposition \ref{P3.3}, we choose
a pure quasi-isomorphism $g:C_f^\bullet\longrightarrow J^\bullet$ with $J^\bullet$ a bounded below
complex of pure injective modules. Since $C_f^\bullet=N[1]^\bullet\oplus I^\bullet$ as a $\mathbb Z$-graded module,
$g$ induces morphisms $g':N[1]^\bullet\longrightarrow C_f^\bullet\overset{g}{\longrightarrow} J^\bullet$  and $g'':I^\bullet\longrightarrow C_f^\bullet\overset{g}{\longrightarrow} J^\bullet$ of graded modules and $g''$ is actually a morphism
of complexes. We rewrite $g':N[1]^\bullet\longrightarrow J^\bullet$ as a morphism $g':N^\bullet\longrightarrow J[-1]^\bullet$ and consider
the following morphism for each $i\in \mathbb Z$:
\begin{equation}\label{eq3.22eq}
h^i:N^i\longrightarrow C_{-g''}[-1]^i=I^i\oplus J[-1]^i\qquad n\mapsto (f(n),g'(n))=(f(n),g(n,0)) 
\end{equation} where $C_{-g''}^\bullet$ is the cone of $-g''$. We claim that $h^\bullet=\{h^i\}_{i\in \mathbb Z}$ is a morphism of complexes. For this, we
note that for some $i\in \mathbb Z$ and for any $n\in N^i$ we have:
\begin{equation}
\begin{array}{rl}
h\circ d_N (n) & = (f\circ d_N(n),g(d_N(n),0))\\
d_{C_{-g''}[-1]}\circ h(n)
& =(d_I\circ f(n),g''\circ f(n)-d_J\circ g(n,0))\\
& =(d_I\circ f(n),g''\circ f(n)-g\circ d_{C_f}(n,0)) \\
& = (d_I\circ f(n),g''\circ f(n)-g(-d_N(n),f(n))) \\
& = (f\circ d_N(n),g( d_N(n),0))\\
\end{array}
\end{equation} where $d_N$ is the differential on $N^\bullet$. As a graded module, it is immediate that the cone $C_h^\bullet$ of $h^\bullet$ satisfies:
\begin{equation}\label{3.24fg}  C_h^\bullet=N[1]^\bullet\oplus C_{-g''}[-1]^\bullet
=N[1]^\bullet\oplus I^\bullet\oplus J[-1]^\bullet =(N[2]^\bullet\oplus I[1]^\bullet\oplus J^\bullet )[-1]
=C_{-g}[-1]^\bullet
\end{equation} To show that \eqref{3.24fg} is an isomorphism of complexes, for some given $i\in \mathbb Z$, 
we choose  any 
$x\in N^{i+1}$, $y\in I^i$ and $z\in J^{i-1}$. Then, we see that:
\begin{equation}
\begin{array}{ll}
d_{C_h}(x,y,z) &= (-d_N(x),h(x)+d_{C_{-g''}[-1]}(y,z)) \\
& =(-d_N(x),f(x)+d_I(y),g(x,0)+g''(y)-d_J(z)) \\
&=(-d_N(x),f(x)+d_I(y),g(x,y)-d_J(z))\\
d_{C_{-g}[-1]}(x,y,z)&= -d_{C_{-g}}(x,y,z)\\ & =-(-d_{C_f}(x,y),-g(x,y)+d_J(z))\\
& =(d_{C_f}(x,y),g(x,y)-d_J(z))\\
& =(-d_N(x),f(x)+d_I(y),g(x,y)-d_J(z))\\
\end{array}
\end{equation} Accordingly, we have an isomorphism $C_h^\bullet\cong C_{-g}[-1]^\bullet$ of complexes.
Since $g$ is a pure quasi-isomorphism, it now follows that $C_h^\bullet$ is pure acyclic. In other words,
$h^\bullet:N^\bullet=\tau^{\geq -n}M^\bullet \longrightarrow  C_{-g''}[-1]^\bullet$ is a pure quasi-isomorphism. 
From the definitions, it is clear that $I_n^\bullet:=C_{-g''}[-1]^\bullet$ is a bounded below complex of pure injectives. Finally,
the definition in \eqref{eq3.22eq} makes it clear that $\{I_n^\bullet\}_{n\geq 0}$ is a special inverse system of
$K$-pure injective complexes. This 
proves the result.
\end{proof}

\medskip
\begin{lem}\label{Lem3.10} Let $\{g_n^\bullet:X_n^\bullet\longrightarrow Y_n^\bullet\}_{n\geq 0}$ be a morphism 
of inverse systems $\{X_n^\bullet\}_{n\geq 0} $  and $\{Y_n^\bullet\}_{n\geq 0}$ of complexes of $R$-modules such that
each $g_n^\bullet$ is a quasi-isomorphism. Suppose that for each $i\geq 0$, we can choose a positive integer
$N(i)$ such that the morphisms
\begin{equation}
\tau^{\geq -i}X_{n+1}^\bullet\longrightarrow \tau^{\geq -i}X_n^\bullet
\qquad\qquad \tau^{\geq -i}Y_{n+1}^\bullet\longrightarrow \tau^{\geq -i}Y_n^\bullet
\end{equation} are epimorphisms in each degree for each $n\geq N(i)$. Then, the induced morphism on
the inverse limits $g^\bullet:X^\bullet:=\underset{n\geq 0}{\varprojlim}\textrm{ }X_n^\bullet \longrightarrow Y^\bullet:=\underset{n\geq 0}{\varprojlim}\textrm{ }Y_n^\bullet $ is a quasi-isomorphism. 
\end{lem}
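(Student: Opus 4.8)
The plan is to show that $g^\bullet \colon X^\bullet \to Y^\bullet$ induces an isomorphism on cohomology in each degree $k \in \mathbb{Z}$. Fix such a $k$, and choose $i \geq 0$ large enough that $-i < k-1$, so that the truncation $\tau^{\geq -i}$ does not affect cohomology in degrees $\geq k-1$ — in particular, $H^{k-1}(\tau^{\geq -i}X_n^\bullet) \cong H^{k-1}(X_n^\bullet)$ and $H^k(\tau^{\geq -i}X_n^\bullet) \cong H^k(X_n^\bullet)$, and similarly for $Y$. Since each $g_n^\bullet$ is a quasi-isomorphism, the induced maps $\tau^{\geq -i}g_n^\bullet \colon \tau^{\geq -i}X_n^\bullet \to \tau^{\geq -i}Y_n^\bullet$ are quasi-isomorphisms as well.

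The key point is to pass from the individual quasi-isomorphisms to the inverse limit, and for this I would use the standard $\varprojlim$-$\varprojlim^1$ machinery. First I observe that the hypothesis on epimorphisms in each degree for $n \geq N(i)$ ensures that the inverse system $\{(\tau^{\geq -i}X_n^\bullet)^j\}_{n}$ is, from index $N(i)$ onwards, a system with surjective transition maps; hence it satisfies the Mittag-Leffler condition, so $\varprojlim^1$ of this system vanishes, and the same holds for $Y$. Moreover, since the transition maps of the original system $\{\tau^{\geq -n}M^\bullet\}$ (and likewise here) are eventually surjective and the truncation functors are compatible, I claim that $\varprojlim_{n} \tau^{\geq -i}X_n^\bullet$ computes the relevant degrees of $X^\bullet = \varprojlim_n X_n^\bullet$; more precisely, in degrees $\geq -i$ we have $(X^\bullet)^j = \varprojlim_n (X_n^\bullet)^j$ and this agrees with $\varprojlim_n (\tau^{\geq -i}X_n^\bullet)^j$ up to the single degree $j = -i$ where a cokernel appears, which is harmless for computing $H^{k-1}$ and $H^k$ since $k-1 > -i$.

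Now I would invoke the exactness properties of $\varprojlim$ over a countable index set: for an inverse system of complexes $\{Z_n^\bullet\}$ with $\varprojlim^1 (Z_n^\bullet)^j = 0$ for all $j$ (Mittag-Leffler in each degree), there is a short exact sequence $0 \to \varprojlim^1 H^{k-1}(Z_n^\bullet) \to H^k(\varprojlim_n Z_n^\bullet) \to \varprojlim_n H^k(Z_n^\bullet) \to 0$. Apply this to $Z_n^\bullet = \tau^{\geq -i}X_n^\bullet$ and to $\tau^{\geq -i}Y_n^\bullet$; the map $g^\bullet$ induces a morphism between these two short exact sequences. Since each $\tau^{\geq -i}g_n^\bullet$ is a quasi-isomorphism, the maps $\varprojlim_n H^k(\tau^{\geq -i}X_n^\bullet) \to \varprojlim_n H^k(\tau^{\geq -i}Y_n^\bullet)$ and $\varprojlim^1_n H^{k-1}(\tau^{\geq -i}X_n^\bullet) \to \varprojlim^1_n H^{k-1}(\tau^{\geq -i}Y_n^\bullet)$ are both isomorphisms, so by the five lemma $H^k(\varprojlim_n \tau^{\geq -i}X_n^\bullet) \to H^k(\varprojlim_n \tau^{\geq -i}Y_n^\bullet)$ is an isomorphism. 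Combining with the identification of these with $H^k(X^\bullet)$ and $H^k(Y^\bullet)$ from the previous paragraph finishes the proof.

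The main obstacle I anticipate is the careful bookkeeping in the second paragraph: verifying that $\varprojlim_n \tau^{\geq -i}X_n^\bullet$ really does compute $H^k(X^\bullet)$ in the range of degrees we care about, and that the degreewise Mittag-Leffler condition genuinely holds (the hypothesis only gives epimorphisms for $n \geq N(i)$, but a tail of a Mittag-Leffler system is enough, and dropping finitely many terms changes neither $\varprojlim$ nor $\varprojlim^1$). One should also be slightly careful that the truncation $\tau^{\geq -i}$ is compatible with the transition maps of the inverse system — it is functorial, so this is automatic — and that the hypothesis is stated precisely so that this functoriality applies. Everything else is a routine application of the $\varprojlim^1$ exact sequence and the five lemma.
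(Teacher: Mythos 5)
Your proof is correct and follows essentially the same route as the paper: fix a cohomological degree, truncate at a level $-i$ far enough below it that the transition maps become eventually surjective, pass to the inverse limit, and identify the cohomology of the limit of the truncated systems with that of $X^\bullet$ and $Y^\bullet$ in the relevant range of degrees. The only difference is that where you run the Milnor $\varprojlim^1$ exact sequence and the five lemma by hand, the paper simply cites this passage-to-the-limit step as \cite[Corollary 3.11]{LS}; your argument is precisely the standard proof of that cited result.
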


\begin{proof} First we fix some $i\geq 0$. It is clear from the definition of the truncations in \eqref{tunc} that the quasi-isomorphisms
$g_n^\bullet:X_n^\bullet\longrightarrow Y_n^\bullet$ induce quasi-isomorphisms 
$\tau^{\geq -i}g_n^\bullet:\tau^{\geq -i}X_n^\bullet\longrightarrow \tau^{\geq -i}Y_n^\bullet$ of complexes. Further,
the transition maps of the inverse systems $\{\tau^{\geq -i}X_n^\bullet\}_{n\geq N(i)}$ and $\{
\tau^{\geq -i}Y_n^\bullet\}_{n\geq N(i)}$ are
all surjections. It now follows from \cite[Corollary 3.11]{LS} that the induced morphism
\begin{equation}
\underset{n\geq 0}{\varprojlim}\textrm{ }\tau^{\geq -i}X_n^\bullet=\underset{n\geq N(i)}{\varprojlim}\textrm{ }
\tau^{\geq -i}X_n^\bullet\longrightarrow 
\underset{n\geq N(i)}{\varprojlim}\textrm{ }\tau^{\geq -i}Y_n^\bullet=\underset{n\geq 0}{\varprojlim}\textrm{ }\tau^{\geq -i}Y_n^\bullet
\end{equation} is a quasi-isomorphism. From the definition of truncations in \eqref{tunc} it is also clear that:
\begin{equation}
H^j(\underset{n\geq 0}{\varprojlim}\textrm{ }\tau^{\geq -i}X_n^\bullet)=H^j(\underset{n\geq 0}{\varprojlim}\textrm{ }X_n^\bullet)\qquad H^j(\underset{n\geq 0}{\varprojlim}\textrm{ }\tau^{\geq -i}Y_n^\bullet)=
H^j(\underset{n\geq 0}{\varprojlim}\textrm{ }Y_n^\bullet)
\end{equation} for all integers $j\geq -i+2$. Choosing $i$ to be arbitrarily large, we now see that
$H^j(X^\bullet)\overset{\sim}{\longrightarrow} H^j(Y^\bullet)$ for all $j\in \mathbb Z$, i.e., $g^\bullet:X^\bullet
\longrightarrow Y^\bullet$ is a quasi-isomorphism. 

\end{proof}

\medskip 
We now make one more observation. Let $\{I^{(j)}\}_{j\geq 0}$ be a family of pure injective modules and consider the product
$I:=\prod_{j\geq 0}I^{(j)}$. If $F$ is a finitely presented $R$-module and $A^\bullet\in C(R)$ is pure acyclic, we observe
that $Hom^\bullet_R(F,Hom_R^\bullet(A^\bullet,I))=Hom^\bullet_R(F\otimes_RA^\bullet,I)=\prod_{j\geq 0}
Hom_R^\bullet(F\otimes_RA^\bullet,I^{(j)})$ is acyclic. It follows that the product $I$ is also pure injective.
We now prove the main result of this section. 

\medskip
\begin{thm}\label{Thinj} Let $R$ be a commutative ring and let $M^\bullet\in C(R)$ be a cochain complex of $R$-modules. Then,
$M^\bullet$ has a pure injective resolution. 
\end{thm}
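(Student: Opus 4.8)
The plan is to assemble the pieces developed in this section into a Spaltenstein-style inverse-limit construction. Starting from the given complex $M^\bullet$, I would first invoke Proposition \ref{P3.9} to obtain a $\mathcal{T}$-special inverse system $\{I_n^\bullet\}_{n\geq 0}$ of $K$-pure injective complexes, with each $I_n^\bullet$ a bounded below complex of pure injectives, together with pure quasi-isomorphisms $f_n:\tau^{\geq -n}M^\bullet\longrightarrow I_n^\bullet$ forming a morphism of inverse systems. Setting $I^\bullet:=\varprojlim_{n\geq 0} I_n^\bullet$ and recalling from the discussion preceding \eqref{tunc} that $M^\bullet=\varprojlim_{n\geq 0}\tau^{\geq -n}M^\bullet$, I would let $u^\bullet:M^\bullet\longrightarrow I^\bullet$ be the induced morphism on inverse limits. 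The claim is that $u^\bullet$ is a pure injective resolution in the sense of Definition \ref{D2.4}(b).

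There are three things to check. First, $I^\bullet$ is a complex of pure injective modules: since the defining short exact sequences of the special inverse system are semi-split (split in each degree), in each fixed degree $i$ the module $I^i$ is an inverse limit of a tower of split surjections of pure injectives, hence is a direct factor—indeed a product—of pure injectives, and by the observation just before Theorem \ref{Thinj} a product of pure injectives is pure injective. Second, $I^\bullet$ is $K$-pure injective, equivalently (by Definition \ref{D2.4}(b)(ii)) the functor $Hom_R^\bullet(\_\_,I^\bullet)$ preserves pure acyclic complexes; this is exactly Proposition \ref{P3.8t}(b), which says the class of $K$-pure injective complexes is closed under special inverse limits, applied to the special inverse system $\{I_n^\bullet\}$. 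Third, and this is the substantive point, $u^\bullet$ must be shown to be a pure quasi-isomorphism.

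To prove $u^\bullet$ is a pure quasi-isomorphism I would tensor with an arbitrary $R$-module $N$ and show $N\otimes_R u^\bullet$ is a quasi-isomorphism; equivalently, reduce (as in Proposition \ref{P3.15}) to showing $Hom_R^\bullet(F,u^\bullet)$ is a pure quasi-isomorphism for $F$ finitely presented, but the cleanest route is Lemma \ref{Lem3.10}. The maps $g_n^\bullet$ there are the $N\otimes_R f_n$, which are quasi-isomorphisms because the $f_n$ are pure quasi-isomorphisms; the inverse limit of $\{N\otimes_R \tau^{\geq -n}M^\bullet\}$ computes $N\otimes_R M^\bullet$ in each degree (tensoring is a left adjoint, but the relevant degreewise limits here are eventually constant, so this causes no trouble), and likewise $\varprojlim_n (N\otimes_R I_n^\bullet)$ relates to $N\otimes_R I^\bullet$ via the semi-splitness, which makes each degreewise tower Mittag-Leffler with surjective transition maps so that the limit commutes with $N\otimes_R(-)$. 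The hypotheses of Lemma \ref{Lem3.10} on epimorphy of the truncated transition maps $\tau^{\geq -i}(-)_{n+1}\to\tau^{\geq -i}(-)_n$ in each degree hold with $N(i)$ chosen so that for $n\geq N(i)$ the complexes agree with $M^\bullet$ (resp. are controlled by the surjective $I_{n+1}^\bullet\to I_n^\bullet$) above degree $-i$; Lemma \ref{Lem3.10} then yields that $N\otimes_R u^\bullet$ is a quasi-isomorphism for every $N$, i.e. $u^\bullet$ is a pure quasi-isomorphism.

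The main obstacle I anticipate is bookkeeping around the interaction of $N\otimes_R(-)$ with the inverse limit: one must verify carefully that $\varprojlim_n(N\otimes_R I_n^\bullet)\cong N\otimes_R I^\bullet$ and $\varprojlim_n(N\otimes_R\tau^{\geq -n}M^\bullet)\cong N\otimes_R M^\bullet$ in each cochain degree, which is where the semi-split structure of the special inverse system (Definition \ref{spinv}(a)(3)) is essential—it guarantees the degreewise transition maps are split surjections, hence stay surjective after tensoring, killing any $\varprojlim^1$ obstruction—and that the degreewise towers for the truncations stabilize so no derived-limit correction is needed. Once that compatibility is in hand, the verification that the three conditions of Definition \ref{D2.4}(b) hold is essentially immediate from Propositions \ref{P3.8t} and \ref{P3.9} and Lemma \ref{Lem3.10}.
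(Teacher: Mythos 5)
Your overall architecture coincides with the paper's: take the special inverse system $\{I_n^\bullet\}_{n\geq 0}$ from Proposition \ref{P3.9}, pass to the limit $I^\bullet=\underset{n\geq 0}{\varprojlim}\textrm{ }I_n^\bullet$, get $K$-pure injectivity of $I^\bullet$ from Proposition \ref{P3.8t}(b), and identify each $I^j$ as a product of pure injectives. Those parts are fine. The gap is in your verification that $u^\bullet$ is a pure quasi-isomorphism. You test purity by applying $N\otimes_R(\_\_)$ and then need the identification $N\otimes_R\underset{n}{\varprojlim}\textrm{ }I_n^\bullet\cong\underset{n}{\varprojlim}\textrm{ }(N\otimes_RI_n^\bullet)$. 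Your justification --- that semi-splitness makes the degreewise towers Mittag-Leffler with surjective transition maps, ``so that the limit commutes with $N\otimes_R(-)$'' --- conflates two different issues. Surjectivity of the transition maps kills $\varprojlim^1$, i.e.\ it makes the inverse limit exact on these towers; it does not make the left adjoint $N\otimes_R(\_\_)$ commute with the inverse limit. Concretely, in each degree one has $I^j\cong I_0^j\oplus\prod_{m\geq 1}J_{m-1}^{j-1}$, and the natural map $N\otimes_R\prod_{m}J_{m-1}^{j-1}\longrightarrow\prod_{m}\bigl(N\otimes_RJ_{m-1}^{j-1}\bigr)$ is in general neither injective nor surjective, since tensor products do not commute with infinite products. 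So Lemma \ref{Lem3.10} applied to the towers $\{N\otimes_RI_n^\bullet\}$ computes the cohomology of $\underset{n}{\varprojlim}\textrm{ }(N\otimes_RI_n^\bullet)$, which you cannot identify with that of $N\otimes_RI^\bullet$.

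The fix is exactly the alternative you mention in passing and then set aside: use the characterization quoted in Section 2 (from \cite[Remark 2.8]{Jal}) that $u^\bullet$ is a pure quasi-isomorphism if and only if $Hom_R^\bullet(P,u^\bullet)$ is a quasi-isomorphism for every pure projective $P$ (finitely presented $P$ already suffice). Since $Hom_R(P,\_\_)$ is a right adjoint, it does commute with inverse limits, so $Hom_R^\bullet(P,I^\bullet)=\underset{n}{\varprojlim}\textrm{ }Hom_R^\bullet(P,I_n^\bullet)$ and $Hom_R^\bullet(P,M^\bullet)=\underset{n}{\varprojlim}\textrm{ }Hom_R^\bullet(P,\tau^{\geq -n}M^\bullet)$ hold on the nose. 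The surjectivity of the truncated transition maps required by Lemma \ref{Lem3.10} then follows because $Hom_R^\bullet(P,\_\_)$ preserves the mapping-cone description $I_{n+1}^\bullet=C_{x_n}[-1]^\bullet$ coming out of Proposition \ref{P3.9}, hence sends the degreewise split surjections $I_{n+1}^\bullet\longrightarrow I_n^\bullet$ to surjections. This is the route the paper takes; it is the one point in your outline where the argument cannot be run through $\otimes$, and your proposal as written does not close it.
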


\begin{proof} We consider the special inverse system $\{I_n^\bullet\}_{n\geq 0}$ of $K$-pure injective complexes
and the pure quasi-isomorphisms $\{f_n:\tau^{\geq -n}M^\bullet\longrightarrow I_n^\bullet\}_{n\geq 0}$
given by  Proposition \ref{P3.9}. We choose any  pure projective $R$-module $P$. Then, we have
quasi-isomorphisms:
\begin{equation}
Hom_R^\bullet(P,f_n):Hom_R^\bullet(P,\tau^{\geq -n}M^\bullet)\longrightarrow 
Hom_R^\bullet(P,I_n^\bullet) 
\end{equation} We first consider the inverse system $\{Hom_R^\bullet(P,\tau^{\geq -n}M^\bullet)\}_{n\geq 0}$. 
Choose any $i\geq 0$. Then, it is clear that for $n\geq i+2$, the canonical morphisms 
\begin{equation}\label{3.30q}
\tau^{\geq -i}Hom_R^\bullet(P,\tau^{\geq -n-1}M^\bullet)\longrightarrow 
\tau^{\geq -i}Hom_R^\bullet(P,\tau^{\geq -n}M^\bullet)
\end{equation} are all identity. Further, from the proof of Proposition \ref{P3.9}, we know that each $I_{n+1}^\bullet$ 
can be expressed as $I_{n+1}^\bullet
=C_{x_n}[-1]^\bullet$, where $C_{x_n}^\bullet$
is   the mapping cone of a morphism $x_n^\bullet:I_n^\bullet\longrightarrow J_n^\bullet$. Since the functor
$Hom_R^\bullet(P,\_\_)$ commutes with mapping cones (see, for instance, \cite[(A.2.1.2)]{LCs}), we get 
\begin{equation}
Hom_R^\bullet(P,I_{n+1}^\bullet)=Cone(Hom_R^\bullet(P,x_n^\bullet))[-1]^\bullet
\end{equation} Accordingly, the morphisms $Hom_R^\bullet(P,I_{n+1}^\bullet)=Cone(Hom_R^\bullet(P,x_n^\bullet))[-1]^\bullet\longrightarrow Hom_R^\bullet(P,I_{n}^\bullet)$ are all surjective for $n\geq 0$. It follows easily
that for any $i\geq 0$, the induced morphism on the truncations:
\begin{equation}\label{3.32q}
\tau^{\geq -i}Hom_R^\bullet(P,I_{n+1}^\bullet)\longrightarrow \tau^{\geq -i}Hom_R^\bullet(P,I_{n}^\bullet)
\end{equation} is surjective for each $n\geq 0$. We now set $I^\bullet:=\underset{n\geq 0}{\varprojlim}\textrm{ } 
I_n^\bullet$ and consider the induced morphism $f^\bullet:M^\bullet
=\underset{n\geq 0}{\varprojlim}\textrm{ }\tau^{\geq -n}M^\bullet\longrightarrow I^\bullet$. From \eqref{3.30q} and \eqref{3.32q} and applying
Lemma \ref{Lem3.10}, it follows that we have a quasi-isomorphism on inverse limits:
\begin{equation}\label{3.33q}
\begin{CD}
Hom_R^\bullet(P,M^\bullet)=\underset{n\geq 0}{\varprojlim}\textrm{ }Hom_R^\bullet(P,\tau^{\geq -n}M^\bullet)\\
@VHom_R^\bullet(P,f^\bullet)VV \\
\underset{n\geq 0}{\varprojlim}\textrm{ }Hom_R^\bullet(P,I_n^\bullet)=Hom_R^\bullet(P,I^\bullet)  \\
\end{CD}
\end{equation} for any  pure projective $R$-module $P$.  From \eqref{3.33q}, we conclude that $f^\bullet:M^\bullet\longrightarrow I^\bullet$
is a pure quasi-isomorphism. Further, we know from Proposition \ref{P3.9} that $\{I_n^\bullet\}_{n\geq 0}$
is a special inverse system of 
$K$-pure injective complexes. It now follows from Proposition \ref{P3.8t}(b) that the limit
$I^\bullet$ of the special inverse system $\{I_n^\bullet\}_{n\geq 0}$  is still $K$-pure injective. It remains to show that 
$I^\bullet$ is a complex of pure injectives. For this, we notice that for any $j\in \mathbb Z$, 
$I^j$ is the limit of the following inverse system: 
\begin{equation}
\begin{CD}
...\longrightarrow I^j_{n+1}=I^j_{n}\oplus J_n^{j-1}@>p_{n+1}>> I_n^j=I^j_{n-1}\oplus J_{n-1}^{j-1}
@>p_n>> ...\longrightarrow I_0^j
\end{CD}
\end{equation} where each $p_n$ is the canonical projection onto the direct summand. Then, 
$I^j$ can be expressed as the direct product $I^j=I_0^j\oplus \prod_{n\geq 1}J_{n-1}^{j-1}$ of pure injectives. Hence,
$I^j$ is pure injective. 
\end{proof} 

\medskip

\medskip
\section{Pure projective resolutions of unbounded complexes}

\medskip

\medskip
In this section, we will construct pure projective resolutions for arbitrary complexes of $R$-modules. As in the previous
section, our methods are an adaptation of the classical method for constructing projective resolutions of 
bounded above complexes (see, for example, \cite{Murf1}) along with the techniques of Spaltenstein \cite{Sp} for treating unbounded complexes. Unfortunately,  the proofs in this section are not always the dual of the arguments in Section 3. However,  we will try to be as concise
as possible 
 by pointing out all those arguments that are dual to the case of pure injective resolutions. 

\medskip
\begin{defn}\label{Kproj} We will say that a cochain complex $M^\bullet\in C(R)$ is $K$-pure projective if the functor
$Hom_R^\bullet(M^\bullet,\_\_):C(R)\longrightarrow C(R)$ carries pure acyclic complexes to pure acyclic complexes. 
\end{defn}

\medskip
We make the following observation: given a pure acyclic complex $A^\bullet\in C(R)$  and any finitely presented
$R$-module $F$, we consider the complex $Hom_R^\bullet(F,A^\bullet)$. Now if $F'$ is any other
finitely presented $R$-module, the tensor product $F'\otimes_RF$ is still finitely presented. Hence, 
$Hom_R^\bullet(F'\otimes_RF,A^\bullet)$ is acyclic. Therefore, $
 Hom_R^\bullet(F',Hom_R^\bullet(F,A^\bullet))\cong Hom_R^\bullet(F'\otimes_RF,A^\bullet)$ is acyclic
for any finitely presented $R$-module $F'$ and we conclude that  $Hom_R^\bullet(F,A^\bullet)$ is actually
pure acyclic. 

\medskip
\begin{thm}\label{P4.2} For a complex $M^\bullet\in C(R)$, the following statements are equivalent:

\medskip
(a) $M^\bullet$ is $K$-pure projective. 

\smallskip
(b) For any pure acyclic complex $A^\bullet\in C(R)$, we have $Hom_{K(R)}(M^\bullet, A^\bullet)=0$. 

\smallskip
(c) For any complex $A^\bullet\in C(R)$, we have $Hom_{K(R)}(M^\bullet,A^\bullet)\cong 
Hom_{D_{pur}(R)}(M^\bullet,A^\bullet)$. 

\end{thm}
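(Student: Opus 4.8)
The plan is to prove (a) $\Leftrightarrow$ (b) by dualizing the proof of Proposition \ref{P3.15}, and (b) $\Leftrightarrow$ (c) by dualizing the proof of Proposition \ref{P3.16}; in every step the complex $M^\bullet$ now occupies the contravariant slot of $Hom$, so the only genuine changes are in the Hom--tensor bookkeeping and in passing from right roofs to left roofs.

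For (a) $\Rightarrow$ (b): if $M^\bullet$ is $K$-pure projective and $A^\bullet\in C(R)$ is pure acyclic, then $Hom_R^\bullet(M^\bullet,A^\bullet)$ is pure acyclic, hence acyclic, and \eqref{hom} gives $Hom_{K(R)}(M^\bullet,A^\bullet)=H^0(Hom_R^\bullet(M^\bullet,A^\bullet))=0$. For (b) $\Rightarrow$ (a): given a pure acyclic $A^\bullet$, I would show $Hom_R^\bullet(M^\bullet,A^\bullet)$ is pure acyclic, which amounts to showing $Hom_R^\bullet(F,Hom_R^\bullet(M^\bullet,A^\bullet))$ is acyclic for every finitely presented $R$-module $F$. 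Here I would use the adjunction $Hom_R^\bullet(X^\bullet,Hom_R^\bullet(Y^\bullet,Z^\bullet))\cong Hom_R^\bullet(X^\bullet\otimes_RY^\bullet,Z^\bullet)$ for complexes together with $F\otimes_RM^\bullet\cong M^\bullet\otimes_RF$ (valid since $R$ is commutative) to rewrite $Hom_R^\bullet(F,Hom_R^\bullet(M^\bullet,A^\bullet))\cong Hom_R^\bullet(M^\bullet,Hom_R^\bullet(F,A^\bullet))$. By the observation preceding this Proposition, $Hom_R^\bullet(F,A^\bullet)$ is pure acyclic, hence so is every shift $Hom_R^\bullet(F,A^\bullet)[k]^\bullet$, and then \eqref{homp} combined with hypothesis (b) gives $H^k(Hom_R^\bullet(M^\bullet,Hom_R^\bullet(F,A^\bullet)))=Hom_{K(R)}(M^\bullet,Hom_R^\bullet(F,A^\bullet)[k]^\bullet)=0$ for all $k$, so the complex is acyclic, as required. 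The step that needs the most care is this two-sided adjunction rewriting.

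For (c) $\Rightarrow$ (b): a pure acyclic $A^\bullet$ is zero in $D_{pur}(R)$, so $Hom_{K(R)}(M^\bullet,A^\bullet)\cong Hom_{D_{pur}(R)}(M^\bullet,A^\bullet)=0$. For (b) $\Rightarrow$ (c), I would first show, dually to the proof of Proposition \ref{P3.16}, that any pure quasi-isomorphism $u^\bullet:N^\bullet\longrightarrow M^\bullet$ admits a right inverse up to homotopy: applying $Hom_{K(R)}(M^\bullet,\_\_)$ to the distinguished triangle $N^\bullet\overset{u^\bullet}{\longrightarrow}M^\bullet\longrightarrow C_u^\bullet$, the groups $Hom_{K(R)}(M^\bullet,C_u^\bullet)$ and $Hom_{K(R)}(M^\bullet,C_u[-1]^\bullet)$ vanish since $C_u^\bullet$ and its shift are pure acyclic and $M^\bullet$ satisfies (b), so $Hom_{K(R)}(M^\bullet,N^\bullet)\overset{\cong}{\longrightarrow}Hom_{K(R)}(M^\bullet,M^\bullet)$, and lifting $1_{M^\bullet}$ produces $w^\bullet:M^\bullet\longrightarrow N^\bullet$ with $u^\bullet\circ w^\bullet\sim 1$. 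Then I would represent a morphism in $D_{pur}(R)$ from $M^\bullet$ to $A^\bullet$ by a left roof $M^\bullet\overset{u^\bullet}{\Longleftarrow}N^\bullet\overset{f^\bullet}{\longrightarrow}A^\bullet$ with $u^\bullet$ a pure quasi-isomorphism, associate to it $f^\bullet\circ w^\bullet\in Hom_{K(R)}(M^\bullet,A^\bullet)$, and conversely associate to $g^\bullet\in Hom_{K(R)}(M^\bullet,A^\bullet)$ the left roof $(1,g^\bullet)$; since $w^\bullet$ gives a morphism of roofs exhibiting $(1,f^\bullet\circ w^\bullet)$ and $(u^\bullet,f^\bullet)$ as equivalent, these two assignments are mutually inverse, which yields the required isomorphism. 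The main obstacle I anticipate is exactly this left-roof bookkeeping, since Proposition \ref{P3.16} was phrased with right roofs and the variance is now reversed; everything else is formally dual to arguments already given in the paper.
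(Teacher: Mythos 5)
Your proposal is correct and follows essentially the same route as the paper: the paper's (b) $\Rightarrow$ (a) step is exactly your adjunction rewriting $Hom_R^\bullet(F,Hom_R^\bullet(M^\bullet,A^\bullet))\cong Hom_R^\bullet(M^\bullet,Hom_R^\bullet(F,A^\bullet))$ combined with the observation that $Hom_R^\bullet(F,A^\bullet)$ is pure acyclic, and your left-roof argument with a one-sided homotopy inverse is precisely the dualization of Proposition \ref{P3.16} that the paper invokes (the paper states it as (a) $\Rightarrow$ (c), but given (a) $\Leftrightarrow$ (b) this is the same argument). The only nit is a label slip: the identity you actually use, $H^k(Hom_R^\bullet(M^\bullet,X^\bullet))=Hom_{K(R)}(M^\bullet,X[k]^\bullet)$, is \eqref{hom} rather than \eqref{homp}.
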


\begin{proof} (a) $\Rightarrow$ (b) : This is dual to the corresponding argument in the proof of Proposition \ref{P3.15}. 

\smallskip
(b) $\Rightarrow$ (a): Let $A^\bullet\in C(R)$ be pure acyclic. We have to show that 
$Hom_R^\bullet(M^\bullet,A^\bullet)$ is pure acyclic. From the observation above, we know that for any finitely presented $R$-module $F$, $Hom_R^\bullet(F,A^\bullet)$ (and hence $Hom_R^\bullet(F,A^\bullet)[k]$ for any $k\in 
\mathbb Z$) is pure acyclic. Then, we get:
\begin{equation}
\begin{array}{ll}
H^k(Hom_R^\bullet(F,Hom_R^\bullet(M^\bullet,A^\bullet)))&=H^k(Hom_R^\bullet(M^\bullet,
Hom_R^\bullet(F,A^\bullet)))\\ & =Hom_{K(R)}(M^\bullet,Hom_R^\bullet(F,A^\bullet)[k])=0\\
\end{array}
\end{equation} for any $k\in \mathbb Z$. Hence, $Hom_R^\bullet(M^\bullet,A^\bullet)$
is pure acyclic. 

\smallskip
(c) $\Rightarrow$ (b): This is clear because any pure acyclic $A^\bullet$ is $0$ in $D_{pur}(R)$. 

\smallskip
(a) $\Rightarrow$ (c): The proof is dual to the corresponding argument in the proof of Proposition \ref{P3.16}
if we consider the morphisms in the derived category $D_{pur}(R)$ as ``left roofs'' in place
of the ``right roofs'' appearing in \eqref{rroof1}.

\end{proof}

\medskip
\begin{thm}\label{P4.3} Let $P^\bullet\in C(R)$ be a bounded above complex of pure projective modules. 
Then, $P^\bullet$ is $K$-pure projective. 
\end{thm}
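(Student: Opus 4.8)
The plan is to mirror the proof of Proposition~\ref{P3.2}, replacing the pure injectivity extension property by the pure projectivity lifting property. By Proposition~\ref{P4.2}, it suffices to show that $Hom_{K(R)}(P^\bullet, A^\bullet) = 0$ for every pure acyclic complex $A^\bullet \in C(R)$; that is, every chain map $f^\bullet : P^\bullet \longrightarrow A^\bullet$ is null-homotopic. For definiteness I would normalize so that $P^j = 0$ for $j > 0$ and $P^0 \ne 0$, and build the homotopy $t^k : P^k \longrightarrow A^{k-1}$ by descending induction on $k$, rather than ascending as in Proposition~\ref{P3.2}, since now the complex of special modules is bounded above instead of below.

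First I would set up the induction: for $k > 0$ simply take $t^k = 0$, which is forced since $P^k = 0$, and note the homotopy identity $f^k = t^{k+1} \circ d_P^k + d_A^{k-1} \circ t^k$ holds trivially in that range. Then, assuming $t^{k+1}, t^{k+2}, \dots$ have been constructed for some $k \le 0$ with the homotopy identities satisfied in all higher degrees, I would produce $t^k : P^k \longrightarrow A^{k-1}$. The key computation is to check that $(f^k - t^{k+1} \circ d_P^k) : P^k \longrightarrow A^k$ has image inside $Ker(d_A^k) = Im(d_A^{k-1})$: one computes $d_A^k \circ (f^k - t^{k+1} \circ d_P^k) = d_A^k \circ f^k - d_A^k \circ t^{k+1} \circ d_P^k$, and using $f^{k+1} = t^{k+2} \circ d_P^{k+1} + d_A^k \circ t^{k+1}$ together with $d_A^k f^k = f^{k+1} d_P^k$ and $d_P^{k+1} d_P^k = 0$, this collapses to $0$. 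So $f^k - t^{k+1} \circ d_P^k$ factors as $P^k \longrightarrow Im(d_A^{k-1}) \hookrightarrow A^k$, where by Definition~\ref{pureacy} the inclusion $Im(d_A^{k-1}) = Ker(d_A^k) \hookrightarrow A^k$ sits in the pure short exact sequence at degree $k$, so $A^{k-1} \twoheadrightarrow Im(d_A^{k-1})$ is a pure epimorphism. Since $P^k$ is pure projective, the map $P^k \longrightarrow Im(d_A^{k-1})$ lifts along this pure epimorphism to a map $t^k : P^k \longrightarrow A^{k-1}$ with $d_A^{k-1} \circ t^k = f^k - t^{k+1} \circ d_P^k$, which is exactly the homotopy identity in degree $k$.

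This completes the induction and hence produces a full homotopy $\{t^k\}_{k \in \mathbb{Z}}$ showing $f^\bullet \sim 0$. I do not expect a serious obstacle here: the argument is genuinely dual to Proposition~\ref{P3.2}, the only subtlety being the bookkeeping of indices (descending rather than ascending induction, and the base case being automatic from boundedness above rather than below) and being careful that the relevant pure exactness we invoke is \emph{pure epimorphism onto the image} rather than pure monomorphism of the image --- but these are two sides of the same pure short exact sequence in Definition~\ref{pureacy}(d), so pure projectivity of $P^k$ applies directly. The one place to be slightly careful is to confirm that the partial homotopy identities already established in higher degrees are exactly what is needed to verify the factorization through $Ker(d_A^k)$; that is the content of the displayed computation above and it goes through cleanly.
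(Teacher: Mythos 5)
Your proposal is correct and follows exactly the route the paper takes: the paper's own proof consists precisely of reducing via Proposition~\ref{P4.2} to showing $Hom_{K(R)}(P^\bullet,A^\bullet)=0$ for pure acyclic $A^\bullet$ and then dualizing the homotopy construction of Proposition~\ref{P3.2}, which is the descending induction you carry out in detail. The only nitpick is that the pure epimorphism $A^{k-1}\twoheadrightarrow Im(d_A^{k-1})$ and the pure monomorphism $Im(d_A^{k-1})\hookrightarrow A^k$ come from the pure short exact sequences at the two \emph{adjacent} degrees $k-1$ and $k$ in Definition~\ref{pureacy}(d), not from the same one, but since $A^\bullet$ is pure acyclic at every degree this does not affect the argument.
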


\begin{proof} Following Proposition \ref{P4.2}, it suffices to show that $Hom_{K(R)}(P^\bullet,A^\bullet)=0$
for any pure acyclic $A^\bullet\in C(R)$. The rest of the argument is now dual to that in the proof of 
Proposition \ref{P3.2}.

\end{proof}

\medskip
It is known (see \cite[Proposition 1]{War}) that for any $R$-module $M$, there exists a pure epimorphism
$P\twoheadrightarrow M$ with $P$ a pure projective module.  
We are now ready to show that any bounded above complex of $R$-modules admits a pure projective resolution. 
Now, a key step in the proof of Proposition \ref{P3.3}, which gives the corresponding result for pure injective resolutions, 
is the fact that the functor $\_\_\otimes_RN$ preserves cokernels for any $N\in R-Mod$. Since this no longer
holds for kernels, we must modify our approach somewhat to obtain pure projective resolutions, i.e.,  we cannot simply dualize the  proof of Proposition \ref{P3.3} here. 

\medskip
\begin{thm}\label{P4.4} (a) Let $M^\bullet\in C(R)$ be a cochain complex that is bounded above. Then, 
there exists a pure quasi-isomorphism $v^\bullet:P^\bullet\longrightarrow M^\bullet$ such that
$P^\bullet$ is a bounded above complex of pure projective modules.

\medskip
(b)  Let $f^\bullet:M_2^\bullet\longrightarrow M_1^\bullet $ be a morphism of bounded above complexes
in $C(R)$. Let $v^\bullet_2: P_2^\bullet\longrightarrow M_2^\bullet$ be a pure quasi-isomorphism to
$M_2^\bullet$ from  a bounded above complex $P_2^\bullet$ of pure projectives. Then, there exists a bounded above complex $P_1^\bullet$ of pure projectives,  a pure quasi-isomorphism
$v_1^\bullet: P_1^\bullet\longrightarrow M_1^\bullet$  and a morphism $g:P_2^\bullet\longrightarrow P_1^\bullet$  
fitting into a commutative diagram: 
\begin{equation}\label{tr3.12x}
\begin{CD}
P_2^\bullet @>v_2^\bullet>> M_2^\bullet \\
@Vg^\bullet VV @Vf^\bullet VV\\
P_1^\bullet @>v_1^\bullet>>  M_1^\bullet\\
\end{CD}
\end{equation}
\end{thm}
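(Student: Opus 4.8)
The plan is to prove part (a) first by the classical construction of a bounded above pure projective resolution, then deduce part (b) by a comparison-type argument dual to that of Proposition \ref{P3.5}. For (a), I would build $P^\bullet$ by a descending induction on the degree, working from the top. Assuming $M^j=0$ for $j>0$ and $M^0\neq 0$, I would choose a pure epimorphism $P^0\twoheadrightarrow M^0$ from a pure projective $P^0$ (using \cite[Proposition 1]{War}). The difficulty is that, unlike in Proposition \ref{P3.3}, the functor $\_\_\otimes_R N$ does not preserve kernels, so I cannot simply require an inductive invariant on cycles as the author did with cokernels. Instead, the right move is to phrase the inductive hypothesis in terms of \emph{cokernels} of the differentials read in the other direction: at each stage I would arrange that the canonical map $P^{-k}\to M^{-k}\times_{M^{-k+1}} P^{-k+1}$ (a fiber product) is surjective and its source is pure projective, after first applying $\_\_\otimes_R N$ to check everything behaves well. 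Concretely, at stage $-k$ form the fiber product (pullback) $Q:=M^{-k}\times_{M^{-k+1}}P^{-k+1}$ of the maps $M^{-k}\to M^{-k+1}$ and $P^{-k+1}\to M^{-k+1}$, choose a pure epimorphism $P^{-k}\twoheadrightarrow Q$ with $P^{-k}$ pure projective, and let $v^{-k}$ and $d_P^{-k}$ be the two compositions $P^{-k}\to Q\to M^{-k}$ and $P^{-k}\to Q\to P^{-k+1}$. One then checks $d_P\circ d_P=0$ and that $v^\bullet$ commutes with the differentials by the universal property of the pullback.

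The verification that $v^\bullet:P^\bullet\to M^\bullet$ is a \emph{pure} quasi-isomorphism is then the crux, and this is where the difference with the injective case bites. I would apply $\_\_\otimes_R N$ for an arbitrary $N\in R\mathrm{-Mod}$ and show $N\otimes_R v^\bullet$ is a quasi-isomorphism. Since the maps $P^{-k}\twoheadrightarrow Q$ are pure epimorphisms and tensoring preserves surjectivity (and, crucially, the purity means the relevant short exact sequences stay exact after $\otimes_R N$), the pullback square $Q\to M^{-k}$, $Q\to P^{-k+1}$, $P^{-k+1}\to M^{-k+1}$ — which one arranges to be simultaneously a pushout/pullback in the good degrees — remains exact after tensoring. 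The cleanest formalization is to invoke the dual of \cite[Lemma 68]{Murf1} (the same lemma the author used in Proposition \ref{P3.3}) applied to the pullback squares together with the pure epimorphisms $P^{-k}\otimes_R N\twoheadrightarrow Q\otimes_R N$, to conclude an isomorphism on homology in each degree $\leq -k+2$; letting $-k\to-\infty$ gives a quasi-isomorphism in every degree. This yields the pure quasi-isomorphism $v^\bullet:P^\bullet\to M^\bullet$ with $P^\bullet$ a bounded above complex of pure projectives, proving (a).

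For part (b), I would argue exactly dually to Proposition \ref{P3.5}. Form the mapping cone $C^\bullet_{fv_2}$ of the composite $f^\bullet\circ v_2^\bullet:P_2^\bullet\to M_1^\bullet$, which is again bounded above; apply part (a) to obtain a pure quasi-isomorphism $w^\bullet:P^\bullet\to C^\bullet_{fv_2}$ from a bounded above complex of pure projectives; then consider the composite $P^\bullet\overset{w^\bullet}{\to} C^\bullet_{fv_2}\to M_1[1]^\bullet$ (using the natural map from the cone to the shift of the target) and take its shifted mapping cone, exactly mirroring the diagram \eqref{tr3.13} but with the arrows into $M_1^\bullet$ reversed. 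The triangulated structure on $K(R)$ supplies a morphism $v_1^\bullet:P_1^\bullet\to M_1^\bullet$, where $P_1^\bullet$ is the appropriate shifted cone, a bounded above complex of pure projectives, together with $g^\bullet:P_2^\bullet\to P_1^\bullet$ making \eqref{tr3.12x} commute. Finally, tensoring the whole diagram of triangles with an arbitrary $N$, using that cones commute with $N\otimes_R\_\_$ and that $N\otimes_R w^\bullet$ is a quasi-isomorphism, and applying \cite[Tag 014A]{Stacks} (two out of three for quasi-isomorphisms in $D(R)$), shows $N\otimes_R v_1^\bullet$ is a quasi-isomorphism, so $v_1^\bullet$ is a pure quasi-isomorphism. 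The main obstacle throughout is bookkeeping the lack of left-exactness of $\otimes_R N$: one must set up the inductive construction in (a) via pullbacks and pure epimorphisms rather than naively dualizing the cokernel-based argument of Proposition \ref{P3.3}, and be careful that the relevant squares are pushouts in precisely the degrees where exactness after tensoring is needed.
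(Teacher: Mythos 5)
Your construction in part (a) is structurally the right one (descending induction, pullbacks, pure epimorphisms onto pure projectives), and your part (b) matches the paper's (which is just the dual of Proposition \ref{P3.5}). But there are two problems in (a), one minor and one fatal. The minor one: taking $Q:=M^{-k}\times_{M^{-k+1}}P^{-k+1}$ does not give $d_P\circ d_P=0$; the pullback only forces $v^{-k+1}\circ d_P^{-k}=d_M^{-k}\circ v^{-k}$, not that $d_P^{-k}$ lands in $Ker(d_P^{-k+1})$. The paper instead pulls back along $Ker(e^{k+1})\longrightarrow M^{k+1}$, which is the standard fix.

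The fatal one is your verification that $v^\bullet$ is a pure quasi-isomorphism. You propose to apply $\_\_\otimes_RN$ and argue that the pullback squares ``remain exact after tensoring'' because the epimorphisms $P^{-k}\twoheadrightarrow Q$ are pure. Purity of those epimorphisms is irrelevant to the real obstruction: $\_\_\otimes_RN$ is only right exact, so it does not preserve the pullback square itself (equivalently, $Ker(d^{-k+1})\otimes_RN\neq Ker(d^{-k+1}\otimes_RN)$ in general), and no amount of purity in the vertical maps repairs that. This is exactly the obstruction the paper flags before Proposition \ref{P4.4}, and its resolution is the one idea your proposal is missing: instead of testing pure quasi-isomorphisms with the right-exact functors $\_\_\otimes_RN$, test them with the left-exact functors $Hom_R(Q,\_\_)$ for $Q$ ranging over pure projectives, using the equivalence (recalled in Section 2 from \cite[Remark 2.8]{Jal}) that $f^\bullet$ is a pure quasi-isomorphism iff $Hom_R^\bullet(Q,f^\bullet)$ is a quasi-isomorphism for every $Q\in\mathcal{PP}$. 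These functors preserve kernels and pullbacks, and they carry pure epimorphisms to epimorphisms precisely because $Q$ is pure projective, so the dual of the cokernel bookkeeping from Proposition \ref{P3.3} goes through verbatim on the complexes $Hom_R^\bullet(Q,P^\bullet)\longrightarrow Hom_R^\bullet(Q,M^\bullet)$. Without this switch of test functors your induction cannot close.
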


\begin{proof} (a) For the sake of definiteness, we suppose  that the complex $(M^\bullet,d^\bullet)$
 satisfies $M^0\ne 0$ and $M^i =0$ for each $i>0$. We set $P^i=0$ for each $i>0$ and choose
a pure epimorphism $v^0:P^0\longrightarrow M^0$ with $P^0$ pure projective. Then, for every 
$i\geq 0$, we have already obtained pure projectives $P^i$, morphisms $v^i:P^i\longrightarrow M^i$ along with differentials
$e^i:P^i\longrightarrow P^{i+1}$ such that we have induced isomorphisms:
\begin{equation}
Ker(Hom_R(Q,e^{i+1}))/Im(Hom_R(Q,e^i))\overset{\sim}{\longrightarrow} H^{i+1}(Hom_R^\bullet(Q,M^\bullet) ) 
\qquad\forall\textrm{ }Q\in \mathcal{PP}
\end{equation} 
and epimorphisms: 
\begin{equation}
Ker(Hom_R(Q,e^{i}))\twoheadrightarrow Ker(Hom_R(Q,d^{i}))\qquad\forall\textrm{ }Q\in \mathcal{PP}
\end{equation}We suppose that we have already done this  for all integers $i$ greater than some given integer
$k\geq -1$. We will show that there exists a pure projective $P^k$, a morphism $v^k:P^k\longrightarrow M^k$ and
a differential $e^k:P^k\longrightarrow P^{k+1}$
such that: 
\begin{equation}\label{eq4.3}
\begin{array}{c}
Ker(Hom_R(Q,e^{k+1}))/Im(Hom_R(Q,e^k))\overset{\sim}{\longrightarrow} H^{k+1}(Hom_R^\bullet(Q,M^\bullet) ) \qquad\forall\textrm{ }Q\in \mathcal{PP}\\  
Ker(Hom_R(Q,e^{k}))\twoheadrightarrow Ker(Hom_R(Q,d^{k}))\qquad\forall\textrm{ }Q\in \mathcal{PP}\\
\end{array}
\end{equation} We now consider the object $L^k$ defined by
the following pullback square:
\begin{equation}
\begin{CD}
L^k @>>> Ker(e^{k+1}) \\
@VVV @VVV \\
M^k @>d^k>> M^{k+1} \\
\end{CD}
\end{equation} and choose a pure epimorphism $P^k\longrightarrow L^k$ with $P^k$ pure projective. Then, for any $Q\in 
\mathcal{PP}$, the induced morphism $Hom_R(Q,P^k)\longrightarrow Hom_R(Q,L^k)$ is still an epimorphism and we obtain
a pullback square
\begin{equation}\label{pull4}
\begin{CD}
Hom_R(Q,L^k) @>>> Hom_R(Q,Ker(e^{k+1}))=Ker(Hom_R(Q,e^{k+1})) \\
@VVV @VVV \\
Hom_R(Q,M^k) @>Hom_R(Q,d^k)>> Hom_R(Q,M^{k+1}) \\
\end{CD}
\end{equation} We  set $e^k:P^k\longrightarrow P^{k+1}$ to be the composition
$P^k\longrightarrow L^k\longrightarrow Ker(e^{k+1})\longrightarrow P^{k+1}$ and 
$v^k:P^k\longrightarrow M^k$ to be the composition $P^k\longrightarrow L^k\longrightarrow M^k$. Now applying 
to the pullback square \eqref{pull4} arguments that are dual to those applied to the pushout square
\eqref{3.6y} in the proof of Proposition \ref{P3.3}, we obtain the required results
in \eqref{eq4.3}.  The induced morphisms $Hom_R^\bullet(Q,v^\bullet):Hom_R^\bullet(Q,P^\bullet)\longrightarrow 
Hom_R^\bullet(Q,M^\bullet)$ being quasi-isomorphisms for each pure projective
$Q\in\mathcal{PP}$, it follows that $v^\bullet:P^\bullet\longrightarrow M^\bullet$ is a pure
quasi-isomorphism. 

\medskip
(b) As in part (a), we use the functors $Hom_R(Q,\_\_)$ with $Q\in \mathcal{PP}$ in place of the functors
$\_\_\otimes_RN$ with $N\in R-Mod$ appearing in the proof of Proposition \ref{P3.5}. Since
$Hom_R(Q,\_\_)$ also preserves mapping cones, we can now apply arguments dual 
to those in the proof of Proposition \ref{P3.5} to prove this result. 
\end{proof} 

\medskip
In order to proceed to pure projective resolutions of unbounded complexes, we will need to 
consider ``special direct systems''. 

\medskip
\begin{defn}\label{spproj} (see \cite[Definition 2.6]{Sp} )  Let $\mathcal T$ be a class of complexes in $C(R)$ that
is closed under isomorphisms.

\medskip

 (a) A $\mathcal T$-special direct system
of complexes  is a direct system  $\{P_n^\bullet\}_{n\in \mathbb Z}$ of complexes in $C(R)$ satisfying the 
following conditions for each $n\in \mathbb Z$:

\medskip
(1)  The cochain map $P_{n-1}^\bullet\longrightarrow P^\bullet_{n}$ is injective. 

\smallskip
(2) The cokernel $C_n^\bullet:=Coker(P_{n-1}^\bullet\longrightarrow P_{n}^\bullet)$
lies in the class $\mathcal T$. 

\smallskip
(3) The short exact sequence of complexes:
\begin{equation}
0\longrightarrow P^\bullet_{n-1} \overset{i_n^\bullet}{\longrightarrow} P_n^\bullet\overset{p^\bullet_n}{\longrightarrow} C_{n}^\bullet \longrightarrow 0
\end{equation} is ``semi-split'', i.e., it is split in each degree.

\medskip
(b) The class $\mathcal T\subseteq C(R)$ is said to be closed under special direct limits if the direct limit 
of every
$\mathcal T$-special direct system in $C(R)$  is contained in $\mathcal T$. 
\end{defn}

\medskip
By slight abuse of notation, we will  refer to  $\{P_n^\bullet\}_{n\geq 0}$ as a special direct
system if setting $P_n^\bullet=0$ for all $n<0$ makes $\{P_n^\bullet\}_{n\in \mathbb Z}$ 
into a special direct system in the sense of Definition \ref{spproj} above.

\medskip
\medskip
\begin{thm}\label{P4.8t} (a) Let $\mathcal C\subseteq C(R)$ be a class of complexes. Let $\mathcal T(\mathcal C)$ denote the class of 
complexes  $M^\bullet\in C(R)$  such that $Hom_R^\bullet(M^\bullet,A^\bullet)$ is pure acyclic for
each $A^\bullet \in \mathcal C$. Then, $\mathcal T(\mathcal C)$ is closed under special direct limits. 

\medskip
(b) The class of all $K$-pure projective complexes is closed under special direct limits. 
\end{thm}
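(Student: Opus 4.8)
The plan is to mirror, in the dual direction, the argument of Proposition~\ref{P3.8t}, replacing the role of $\_\_\otimes_R\_\_$ and products by $Hom_R^\bullet(\_\_,\_\_)$ and the treatment of special inverse limits by that of special direct limits from Spaltenstein's framework. For part (a), I would first observe that a complex $M^\bullet$ lies in $\mathcal T(\mathcal C)$ if and only if, for each $A^\bullet\in\mathcal C$ and each finitely presented $R$-module $F$, the complex $Hom_R^\bullet(F,Hom_R^\bullet(M^\bullet,A^\bullet))$ is acyclic. Using the adjunction-type isomorphism $Hom_R^\bullet(F,Hom_R^\bullet(M^\bullet,A^\bullet))\cong Hom_R^\bullet(M^\bullet,Hom_R^\bullet(F,A^\bullet))$, and the observation already made in the excerpt that $Hom_R^\bullet(F,A^\bullet)$ is itself pure acyclic whenever $A^\bullet$ is pure acyclic and $F$ is finitely presented, I would reduce membership in $\mathcal T(\mathcal C)$ to the acyclicity of $Hom_R^\bullet(M^\bullet,B^\bullet)$ for all $B^\bullet$ in a suitable enlarged class $\mathcal C'$ of complexes built from $\mathcal C$ by applying $Hom_R^\bullet(F,\_\_)$. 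Thus it suffices to show that if $\{P_n^\bullet\}_{n\geq 0}$ is a $\mathcal T(\mathcal C)$-special direct system, then $Hom_R^\bullet(\varinjlim_n P_n^\bullet, B^\bullet)$ is acyclic for each such $B^\bullet$.

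The next step is to exploit the defining properties of a special direct system. Because each short exact sequence $0\to P_{n-1}^\bullet\to P_n^\bullet\to C_n^\bullet\to 0$ is semi-split (split in each degree), applying $Hom_R^\bullet(\_\_,B^\bullet)$ degreewise yields, for each $n$, a short exact sequence of complexes
\begin{equation*}
0\longrightarrow Hom_R^\bullet(C_n^\bullet,B^\bullet)\longrightarrow Hom_R^\bullet(P_n^\bullet,B^\bullet)\longrightarrow Hom_R^\bullet(P_{n-1}^\bullet,B^\bullet)\longrightarrow 0,
\end{equation*}
so $\{Hom_R^\bullet(P_n^\bullet,B^\bullet)\}_{n\geq 0}$ is an inverse system with surjective transition maps and kernels $Hom_R^\bullet(C_n^\bullet,B^\bullet)$. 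Since $C_n^\bullet\in\mathcal T(\mathcal C)$, these kernels are acyclic (indeed pure acyclic). Moreover $Hom_R^\bullet(\varinjlim_n P_n^\bullet, B^\bullet)=\varprojlim_n Hom_R^\bullet(P_n^\bullet,B^\bullet)$ because $Hom_R^\bullet(\_\_,B^\bullet)$ converts direct limits (built from split monomorphisms in each degree) into inverse limits. This exhibits $Hom_R^\bullet(\varinjlim_n P_n^\bullet, B^\bullet)$ as an inverse limit of a tower with surjective transitions whose successive kernels are acyclic; by the standard $\varprojlim$--$\varprojlim^1$ argument (or directly by \cite[Corollary 2.5]{Sp}, the same input used in Proposition~\ref{P3.8t}), the inverse limit is acyclic. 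This gives (a). Part (b) then follows immediately by taking $\mathcal C$ to be the class of all pure acyclic complexes in $C(R)$, since in that case $\mathcal T(\mathcal C)$ is exactly the class of $K$-pure projective complexes by Definition~\ref{Kproj}.

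The main obstacle I anticipate is justifying the interchange $Hom_R^\bullet(\varinjlim_n P_n^\bullet, B^\bullet)=\varprojlim_n Hom_R^\bullet(P_n^\bullet,B^\bullet)$ at the level of complexes, together with the passage from ``successive kernels acyclic'' to ``limit acyclic''. The degreewise splitting in condition (3) of Definition~\ref{spproj} is precisely what makes this work: it ensures the relevant transition maps are surjective in each degree, so no $\varprojlim^1$ obstruction to exactness in cohomology arises, and it lets one cite the same Spaltenstein lemma (\cite[Corollary 2.5]{Sp}) that powered the inverse-limit case. Care is needed to package the ``enlarged class'' $\mathcal C'$ correctly so that it is genuinely a set of test complexes for which $Hom_R^\bullet(C_n^\bullet,-)$-acyclicity is available; but this is routine once the finitely-presented adjunction identity and the excerpt's observation about $Hom_R^\bullet(F,A^\bullet)$ being pure acyclic are in hand.
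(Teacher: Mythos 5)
Your reduction, via $Hom_R^\bullet(F,Hom_R^\bullet(M^\bullet,A^\bullet))\cong Hom_R^\bullet(M^\bullet,Hom_R^\bullet(F,A^\bullet))$, to the acyclicity of $Hom_R^\bullet(M^\bullet,B^\bullet)$ for $B^\bullet$ in the enlarged class $\{Hom_R^\bullet(F,A^\bullet)\}$ is exactly the paper's argument; the paper then simply cites \cite[Corollary 2.8]{Sp} (the direct-limit statement --- your citation of Corollary 2.5 is the inverse-limit analogue used in Proposition~\ref{P3.8t}), whereas you inline its proof. One small repair is needed in that inlined step: a tower with degreewise surjective transitions and acyclic successive kernels need not have acyclic limit (a constant tower on a non-acyclic complex with identity transitions has zero kernels), so you must also note that the bottom term $Hom_R^\bullet(P_0^\bullet,B^\bullet)$ is acyclic; this holds because the convention $P_n^\bullet=0$ for $n<0$ forces $C_0^\bullet=P_0^\bullet\in\mathcal T(\mathcal C)$, whence every $Hom_R^\bullet(P_n^\bullet,B^\bullet)$ is acyclic by induction on the semi-split exact sequences and the $\varprojlim$--$\varprojlim^1$ argument then goes through.
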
 

\begin{proof} (a) We set:
\begin{equation*}
\mathcal T':=\{\mbox{$B^\bullet\in C(R)$ $\vert$ $B^\bullet=Hom_R^\bullet(F,A^\bullet)$ for some finitely presented
$R$-module $F$ \& some $A^\bullet\in \mathcal C$}\}
\end{equation*} Now since $Hom_R^\bullet(F,Hom_R^\bullet(M^\bullet,A^\bullet))
\cong Hom_R^\bullet (M^\bullet,Hom_R^\bullet(F,A^\bullet))$, it follows that  a complex $M^\bullet\in \mathcal T(\mathcal C)$
if and only if $Hom_R^\bullet(M^\bullet,B^\bullet)$ is acyclic for each $B^\bullet\in \mathcal T'$. From 
 \cite[Corollary 2.8]{Sp}, we now see  that $\mathcal T(\mathcal C)$ is closed under special direct limits. Finally, the result of 
(b) follows  from (a) by letting $\mathcal C$  be the class of all pure acyclic complexes
in $C(R)$. 

\end{proof}

\medskip
For a complex $(M^\bullet,d^\bullet)\in C(R)$ and any given integer $n$, we now recall that its truncation
$\tau_{\leq n}M^\bullet$ is given by:

\begin{equation}\label{abtrunc}
(\tau_{\leq n}M)^i=\left\{\begin{array}{ll}
0 & \mbox{if $i>n$} \\
Ker(d^n) & \mbox{if $i=n$} \\
M^i & \mbox{if $i<n$} \\
\end{array}\right.
\end{equation}
It is clear that the complex $M^\bullet$ may be expressed as the direct limit 
$M^\bullet=\underset{n\geq 0}{\varinjlim}\textrm{ }\tau_{\leq n}M^\bullet$. 

\medskip

\begin{thm}\label{P4.9} For any complex $M^\bullet\in C(R)$ there exists a special direct system $\{P_n^\bullet\}_{n\geq 0}$
of $K$-pure projective complexes and a morphism $\{f_n:P_n^\bullet\longrightarrow \tau_{\leq n}M^\bullet\}_{n\geq 0}$ 
of direct systems 
satisfying the following conditions:

\medskip
(a) Each $P_n^\bullet$ is a bounded above complex of pure projectives. 

\medskip
(b) Each $f_n$ is a pure quasi-isomorphism. 
\end{thm}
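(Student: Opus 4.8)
The plan is to dualize the construction in the proof of Proposition \ref{P3.9}, using the machinery already set up for bounded above complexes. First I would start the induction: using Proposition \ref{P4.4}(a), choose a pure quasi-isomorphism $f_0:P_0^\bullet\longrightarrow \tau_{\leq 0}M^\bullet$ with $P_0^\bullet$ a bounded above complex of pure projectives. Assume inductively that for all $0\leq j\leq n-1$ we have constructed bounded above complexes $P_j^\bullet$ of pure projectives with pure quasi-isomorphisms $f_j:P_j^\bullet\longrightarrow\tau_{\leq j}M^\bullet$ forming a compatible direct system. To extend to stage $n$, set $I^\bullet:=P_{n-1}^\bullet$, $N^\bullet:=\tau_{\leq n}M^\bullet$, and consider the composition $f:I^\bullet=P_{n-1}^\bullet\longrightarrow \tau_{\leq n-1}M^\bullet\hookrightarrow \tau_{\leq n}M^\bullet=N^\bullet$ coming from the canonical map and $f_{n-1}$. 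Here the roles of source and target are swapped compared with the injective case, since we want $P_n^\bullet$ to map \emph{to} $\tau_{\leq n}M^\bullet$, not receive a map from it.

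Next I would take the mapping cone $C_f^\bullet$ of $f$, which is again a bounded above complex in $C(R)$, and apply Proposition \ref{P4.4}(a) to obtain a pure quasi-isomorphism $g:Q^\bullet\longrightarrow C_f^\bullet$ with $Q^\bullet$ a bounded above complex of pure projectives. As in the proof of Proposition \ref{P3.9}, the graded decomposition $C_f^\bullet=I[1]^\bullet\oplus N^\bullet$ lets us extract from $g$ two graded maps: $g':Q^\bullet\longrightarrow C_f^\bullet$ projected onto the $N^\bullet$ summand gives a genuine chain map $g'':Q^\bullet\longrightarrow N^\bullet$, while the $I[1]^\bullet$-component gives a map we can reinterpret appropriately. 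I would then form an explicit chain map $h^\bullet$ whose source is some cone-type construction built from $Q^\bullet$ and $I^\bullet=P_{n-1}^\bullet$ and whose target is $N^\bullet=\tau_{\leq n}M^\bullet$, arranged so that $P_n^\bullet$ (defined as that cone) is a bounded above complex of pure projectives, fits the direct-system structure over $P_{n-1}^\bullet$ with semi-split cokernel, and $h^\bullet$ is a pure quasi-isomorphism. The verification that $h^\bullet$ is a pure quasi-isomorphism should again follow by identifying its cone, up to a shift and sign, with the cone of $g$, hence pure acyclic.

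The structural claims are then routine: the construction exhibits each transition map $P_{n-1}^\bullet\hookrightarrow P_n^\bullet$ as a degreewise-split monomorphism whose cokernel is (a shift of) $Q^\bullet$, which is bounded above and consists of pure projectives, and Proposition \ref{P4.3} identifies bounded above complexes of pure projectives as $K$-pure projective; so $\{P_n^\bullet\}_{n\geq 0}$ is a special direct system of $K$-pure projective complexes. Compatibility of the $f_n$ with the truncation maps $\tau_{\leq n-1}M^\bullet\hookrightarrow \tau_{\leq n}M^\bullet$ is built into the definition of $h^\bullet$.

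The main obstacle I anticipate is bookkeeping with the signs and the direction of arrows: because cones are intrinsically ``one-sided'', the passage from the injective argument (where we built an inverse system of complexes \emph{receiving} maps from the truncations $\tau^{\geq -n}M^\bullet$) to the projective argument (a direct system of complexes \emph{mapping to} the truncations $\tau_{\leq n}M^\bullet$) is not a formal dualization. One has to be careful that the mapping-cone gymnastics in \eqref{eq3.22eq}--\eqref{3.24fg} get reassembled with source and target interchanged, and that the resulting short exact sequences of complexes really are semi-split and have the cokernel landing in the class of $K$-pure projectives rather than the kernel. Once the explicit formula for $h^\bullet$ and the identification of its cone are pinned down, the rest is a direct transcription of the argument for Proposition \ref{P3.9}.
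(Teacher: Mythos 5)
Your proposal matches the paper exactly: the paper's entire proof of this proposition is the single sentence that it is dual to Proposition \ref{P3.9}, and your sketch carries out precisely that dualization (base case via Proposition \ref{P4.4}(a), inductive step by resolving the cone of $P_{n-1}^\bullet\to\tau_{\leq n}M^\bullet$ by a bounded above complex of pure projectives and reassembling). One small bookkeeping correction: in $C_f^\bullet=P_{n-1}[1]^\bullet\oplus N^\bullet$ it is the projection onto the $P_{n-1}[1]^\bullet$ summand, not the $N^\bullet$ summand, that is a chain map, so the genuine chain map extracted from $g$ is $Q[-1]^\bullet\to P_{n-1}^\bullet$ (whose cone becomes $P_n^\bullet$, containing $P_{n-1}^\bullet$ as a degreewise split subcomplex with cokernel a shift of $Q^\bullet$), while the $N^\bullet$-component is only a graded map, to be combined with $f$ in defining $h^\bullet$.
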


\begin{proof} The proof of this is dual to that of Proposition \ref{P3.9}. 

\end{proof} 

\medskip
\begin{lem}\label{Lem4.8} Let $\{P^{(j)}\}_{j\geq 0}$ be a family of pure projective modules. Then, the 
direct sum $\underset{j\geq 0}{\bigoplus}P^{(j)}$ is pure projective.
\end{lem}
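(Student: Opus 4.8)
The plan is to prove that the direct sum of a family of pure projective modules is pure projective by using the characterization of pure projectivity in terms of mapping into pure acyclic complexes. Recall from Proposition \ref{P4.2} that a module $P$ (viewed as a complex concentrated in degree $0$) is $K$-pure projective --- equivalently pure projective --- if and only if the functor $Hom_R^\bullet(P,\_\_)$ carries pure acyclic complexes to pure acyclic complexes; equivalently, $Hom_R(P,\_\_)$ sends pure short exact sequences of modules to exact sequences. So the goal is to check this property for $\bigoplus_{j\geq 0}P^{(j)}$.

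First I would fix a pure acyclic complex $A^\bullet\in C(R)$ and a finitely presented $R$-module $F$, and aim to show that $Hom_R^\bullet\bigl(F,Hom_R^\bullet(\bigoplus_{j\geq 0}P^{(j)},A^\bullet)\bigr)$ is acyclic, which by Definition \ref{pureacy}(c) is exactly what is needed for $Hom_R^\bullet(\bigoplus_{j\geq 0}P^{(j)},A^\bullet)$ to be pure acyclic. The key observation is the canonical isomorphism of complexes
\begin{equation*}
Hom_R^\bullet\Bigl(\bigoplus_{j\geq 0}P^{(j)},A^\bullet\Bigr)\cong \prod_{j\geq 0}Hom_R^\bullet\bigl(P^{(j)},A^\bullet\bigr),
\end{equation*}
which holds degreewise because $Hom_R$ turns a direct sum in the first variable into a product. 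Since each $P^{(j)}$ is pure projective, each $Hom_R^\bullet(P^{(j)},A^\bullet)$ is pure acyclic, and this is essentially the same situation already handled in the parenthetical remark just before Proposition \ref{Thinj} (for products of pure injectives) --- a product of pure acyclic complexes remains pure acyclic, since $Hom_R^\bullet(F,\_\_)$ commutes with products, the cohomology functor $H^k(\_\_)$ commutes with products, and a product of exact complexes is exact. Chaining these facts gives that $\prod_{j\geq 0}Hom_R^\bullet(P^{(j)},A^\bullet)$ is pure acyclic, hence so is the isomorphic complex $Hom_R^\bullet(\bigoplus_{j\geq 0}P^{(j)},A^\bullet)$.

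Alternatively, and perhaps more cleanly, one can argue directly at the level of modules: given a pure short exact sequence $0\to M'\to M\to M''\to 0$, applying $Hom_R(P^{(j)},\_\_)$ yields a short exact sequence for each $j$ by pure projectivity of $P^{(j)}$, and then taking the product over $j$ preserves exactness (products are exact in $R\mathrm{-Mod}$); since $Hom_R(\bigoplus_j P^{(j)},\_\_)\cong \prod_j Hom_R(P^{(j)},\_\_)$, this shows $\bigoplus_j P^{(j)}$ is pure projective by \cite[Proposition 1]{War} and the standard characterization. I do not anticipate a genuine obstacle here: the only point requiring a moment of care is confirming that the $Hom$--direct-sum adjunction isomorphism is an isomorphism of cochain complexes (compatible with differentials), but this is routine and follows from naturality of the adjunction, exactly as in the analogous observation for pure injectives preceding Proposition \ref{Thinj}.
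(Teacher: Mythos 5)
Your main argument is correct and is essentially the paper's own proof: identify $Hom_R^\bullet\bigl(F,Hom_R^\bullet(\bigoplus_j P^{(j)},A^\bullet)\bigr)$ with $\prod_j Hom_R^\bullet\bigl(F,Hom_R^\bullet(P^{(j)},A^\bullet)\bigr)$, note each factor is acyclic by pure projectivity of $P^{(j)}$, and use that a product of acyclic complexes is acyclic. The alternative module-level sketch is a harmless aside; the core reasoning matches the paper.
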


\begin{proof}  We set $P:=\underset{j\geq 0}{\bigoplus}P^{(j)}$ and consider a pure acyclic complex
$A^\bullet$. We have to check that $Hom_R^\bullet(P,A^\bullet)$ is pure acyclic. For this, we choose
a finitely presented $R$-module $F$ and see that:
\begin{equation*}
Hom_R^\bullet(F, Hom_R^\bullet(P,A^\bullet))= Hom_R^\bullet(F,\underset{j\geq 0}{\prod}Hom_R^\bullet(
P^{(j)},A^\bullet))=\underset{j\geq 0}{\prod}Hom_R^\bullet(F, Hom_R^\bullet(P^{(j)},A^\bullet))
\end{equation*} Since each $P^{(j)}$ is pure projective, each of the complexes $Hom_R^\bullet(P^{(j)},A^\bullet)$ is pure acyclic 
and hence each $Hom_R^\bullet(F, Hom_R^\bullet(P^{(j)},A^\bullet))$ is acyclic. Since the product of acyclic
complexes in $R-Mod$ must be acyclic, we conclude that $Hom_R^\bullet(F, Hom_R^\bullet(P,A^\bullet))$ is acyclic for
any finitely presented $R$-module $F$. This proves the result. 

\end{proof} 

\medskip
\medskip
\begin{thm}\label{Thproj} Let $R$ be a commutative ring and let $M^\bullet\in C(R)$ be a cochain complex of $R$-modules. Then,
$M^\bullet$ has a pure projective resolution. 
\end{thm}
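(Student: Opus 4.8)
The plan is to run the construction of Proposition \ref{Thinj} in its dual form, exploiting the fact that $M^\bullet=\varinjlim_{n\geq 0}\tau_{\leq n}M^\bullet$ and that direct limits now play the role that inverse limits played for pure injectives. First I would invoke Proposition \ref{P4.9} to obtain a special direct system $\{P_n^\bullet\}_{n\geq 0}$ of $K$-pure projective complexes, with each $P_n^\bullet$ a bounded above complex of pure projectives, together with pure quasi-isomorphisms $f_n\colon P_n^\bullet\longrightarrow \tau_{\leq n}M^\bullet$ compatible with the transition maps. I would then set $P^\bullet:=\varinjlim_{n\geq 0}P_n^\bullet$ and let $f^\bullet\colon P^\bullet\longrightarrow M^\bullet$ be the induced map on colimits.

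Next I would verify that $f^\bullet$ is a pure quasi-isomorphism. For this, following the template of \eqref{3.33q}, I would fix a pure projective $Q\in\mathcal{PP}$ and show that $Hom_R^\bullet(Q,f^\bullet)$ is a quasi-isomorphism. Since $Hom_R^\bullet(Q,\_\_)$ commutes with direct limits when $Q$ is a compact object — and more concretely, because each $P_{n+1}^\bullet$ arises (by the proof of Proposition \ref{P4.9}, dual to Proposition \ref{P3.9}) as a shifted mapping cone of a map $P_n^\bullet\to J_n^\bullet$, so the inclusions $P_n^\bullet\hookrightarrow P_{n+1}^\bullet$ are split monomorphisms in each degree — the functor $Hom_R^\bullet(Q,\_\_)$ converts this special direct system into a direct system whose transition maps are degreewise split injections. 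Applying the dual of Lemma \ref{Lem3.10} (using $\tau_{\leq i}$ truncations and the fact that $Hom_R^\bullet(Q,\tau_{\leq n}M^\bullet)$ stabilises in each fixed range as $n$ grows), I would conclude that $Hom_R^\bullet(Q,f^\bullet)$ is a quasi-isomorphism for every $Q\in\mathcal{PP}$, hence $f^\bullet$ is a pure quasi-isomorphism.

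Then I would check the two remaining conditions of Definition \ref{D2.4}(a). That $Hom_R^\bullet(P^\bullet,\_\_)$ preserves pure acyclic complexes — i.e.\ that $P^\bullet$ is $K$-pure projective — follows from Proposition \ref{P4.8t}(b), since $P^\bullet$ is the direct limit of the special direct system $\{P_n^\bullet\}_{n\geq 0}$ of $K$-pure projective complexes. That $P^\bullet$ is a complex of pure projective modules follows by computing $P^j$ for each $j\in\mathbb Z$: just as in the pure injective case, the semi-split structure lets one write $P^j$ as a direct sum $P^j=P_0^j\oplus\bigoplus_{n\geq 1}C_n^j$ (with $C_n^\bullet$ the relevant cokernels, which in the explicit construction are themselves built from pure projectives), and Lemma \ref{Lem4.8} then ensures $P^j$ is pure projective. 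Combining these three facts shows $f^\bullet\colon P^\bullet\longrightarrow M^\bullet$ is a pure projective resolution of $M^\bullet$.

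The main obstacle I anticipate is the second step: verifying that $Hom_R^\bullet(Q,\_\_)$ interacts well enough with the colimit $P^\bullet=\varinjlim P_n^\bullet$. Unlike the pure injective argument, where $Hom_R^\bullet(P,\_\_)$ trivially commutes with the inverse limits defining $I^\bullet$ and the only subtlety was the $\varprojlim^1$-vanishing handled by Lemma \ref{Lem3.10}, here one must be careful that $Hom_R^\bullet(Q,\varinjlim_n P_n^\bullet)\cong\varinjlim_n Hom_R^\bullet(Q,P_n^\bullet)$ — this is why the special direct system hypothesis, giving degreewise split injective transition maps with bounded-above terms, is essential, since it makes each $P^j$ a countable direct sum over which $Hom_R(Q,\_\_)$ need not commute in general but does commute after passing to the cohomology of $Hom_R^\bullet(Q,\_\_)$ via the dual of Lemma \ref{Lem3.10} applied rangewise. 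Making this rangewise stabilisation argument precise, and confirming that the explicit cokernels $C_n^\bullet$ from Proposition \ref{P4.9} really are bounded above complexes of pure projectives so that Lemma \ref{Lem4.8} applies, is where the real work lies.
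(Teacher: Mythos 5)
Your overall skeleton coincides with the paper's: invoke Proposition \ref{P4.9}, pass to the direct limit $P^\bullet=\varinjlim_{n\geq 0}P_n^\bullet$, get $K$-pure projectivity of $P^\bullet$ from Proposition \ref{P4.8t}(b), and termwise pure projectivity from the semi-split structure of the special direct system together with Lemma \ref{Lem4.8}. The one place where you diverge is the verification that $f^\bullet$ is a pure quasi-isomorphism, and there your argument has a genuine gap. You test $f^\bullet$ against $Hom_R^\bullet(Q,\_\_)$ for $Q\in\mathcal{PP}$, which requires $Hom_R^\bullet(Q,\varinjlim_n P_n^\bullet)\cong\varinjlim_n Hom_R^\bullet(Q,P_n^\bullet)$. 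A pure projective $Q$ is only a direct summand of a possibly infinite direct sum of finitely presented modules; it need not be finitely presented, so $Hom_R(Q,\_\_)$ need not commute with direct limits, even along degreewise split monomorphisms: a map $Q\to P^j=P_0^j\oplus\bigoplus_{n\geq 1}C_n^j$ need not factor through any finite stage. The ``dual of Lemma \ref{Lem3.10}'' cannot repair this, because the obstruction is not a stabilisation or $\varprojlim^1$ issue in cohomology but the failure of the underlying isomorphism of Hom-complexes itself.

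The fix is the one the paper uses, and it is simpler than what you propose: test against the tensor characterization of pure quasi-isomorphisms instead. For every $R$-module $N$, the functor $N\otimes_R\_\_$ commutes with all colimits and homology commutes with filtered colimits, so $N\otimes_Rf^\bullet=\varinjlim_n(N\otimes_Rf_n)$ is a quasi-isomorphism; equivalently, the cone of $f^\bullet$ is the filtered colimit of the pure acyclic cones of the $f_n$ and is therefore pure acyclic. This is why no analogue of Lemma \ref{Lem3.10} is needed on the direct-limit side -- that lemma is only required for the inverse limits appearing in Proposition \ref{Thinj}. The remaining steps of your proposal (Proposition \ref{P4.8t}(b) for condition (ii) of Definition \ref{D2.4}(a), and the decomposition of each $P^j$ as a direct sum of pure projectives followed by Lemma \ref{Lem4.8}) match the paper and are correct.
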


\begin{proof} We consider the special direct system $\{P_n^\bullet\}_{n\geq 0}$ of $K$-pure projective complexes along with the pure quasi-isomorphisms
$\{f_n:P_n^\bullet\longrightarrow \tau_{\leq n}M^\bullet\}_{n\geq 0}$  from the proof of Proposition \ref{P4.9}. 
We set $P^\bullet:=\underset{n\geq 0}{\varinjlim}\textrm{ }P_n^\bullet$. It is clear that pure quasi-isomorphisms commute
with direct limits. This gives us an induced pure quasi-isomorphism:
\begin{equation}
f^\bullet: P^\bullet=\underset{n\geq 0}{\varinjlim}\textrm{ }P_n^\bullet\longrightarrow \underset{n\geq 0}{\varinjlim}\textrm{ }
\tau_{\leq n}M^\bullet=M^\bullet
\end{equation} Using Proposition \ref{P4.8t}(b),
we  see that the direct limit $P^\bullet$ is  $K$-pure projective. It remains to show that $P^\bullet$ is a complex
of pure projective modules. From the construction of each $P^\bullet_n$ in Proposition \ref{P4.9} which is dual
to the construction in Proposition \ref{P3.9}, it follows that each term in  the direct limit $P^\bullet$ is actually a direct sum of a family
of pure projective modules. It now follows from Lemma \ref{Lem4.8} that each term in $P^\bullet$ is pure projective.
\end{proof} 

\medskip

\medskip
\section{Closed monoidal structure on the pure derived category}

\medskip

\medskip
In this section, we will use the pure projective and pure injective resolutions developed so far  to give a closed
monoidal structure on the pure derived category $D_{pur}(R)$. Now, if $u^\bullet: M^\bullet\longrightarrow 
N^\bullet$ is a pure quasi-isomorphism and $A$ is any $R$-module, it is immediate from the definitions
that the induced morphism $A\otimes_Ru^\bullet:A\otimes_RM^\bullet\longrightarrow A\otimes_RN^\bullet$
is a pure quasi-isomorphism. In order to produce a tensor structure on $D_{pur}(R)$, we will need to extend
this fact to tensor products of (possibly unbounded) complexes of $R$-modules. 

\medskip
Given cochain complexes $(M^\bullet,d_M^\bullet)$, $(A^\bullet,d_A^\bullet)\in C(R)$, we recall that their tensor product $(M^\bullet\otimes_RA^\bullet)^\bullet$
is the total complex associated to the double complex $(M^\bullet\otimes_RA^\bullet)^{ij}:=M^i\otimes_RA^j$ with differentials
$d_1^{ij}:=d_M^i\otimes_RA^j:(M^\bullet\otimes_RA^\bullet)^{ij}
\longrightarrow (M^\bullet\otimes_RA^\bullet)^{i+1,j}$ and $d_2^{ij}:=M^i\otimes_Rd_A^j:(M^\bullet\otimes_RA^\bullet)^{ij}
\longrightarrow (M^\bullet\otimes_RA^\bullet)^{i,j+1}$.  

\medskip
\begin{lem}\label{Lm5.1} Let $u^\bullet: M^\bullet\longrightarrow 
N^\bullet$ be a pure quasi-isomorphism. Then, for any given integer $n\in \mathbb Z$,
the induced morphism on the truncations $\tau_{\leq n}u^\bullet:\tau_{\leq n}M^\bullet\longrightarrow \tau_{\leq n}
N^\bullet$ is a pure quasi-isomorphism. 
\end{lem}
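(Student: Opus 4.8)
The plan is to reduce to the pure-projective description of pure quasi-isomorphisms. Recall from \cite[Remark 2.8]{Jal} that a morphism $f^\bullet:X^\bullet\to Y^\bullet$ is a pure quasi-isomorphism if and only if $Hom_R^\bullet(P,f^\bullet):Hom_R^\bullet(P,X^\bullet)\to Hom_R^\bullet(P,Y^\bullet)$ is a quasi-isomorphism for every $P\in\mathcal{PP}$. So it suffices to show that $Hom_R^\bullet(P,\tau_{\leq n}u^\bullet)$ is a quasi-isomorphism for each pure projective $P$. The reason for working with this characterization rather than the one involving $N\otimes_R\_\_$ is that $Hom_R(P,\_\_)$ is left exact and hence commutes with the kernel that enters the definition \eqref{abtrunc} of $\tau_{\leq n}$, whereas $\_\_\otimes_RN$ does not; this asymmetry, already flagged at the start of Section 4, is the only real obstacle, and choosing the $Hom$-description at the outset is exactly what circumvents it.

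First I would record the natural identification $Hom_R^\bullet(P,\tau_{\leq n}M^\bullet)=\tau_{\leq n}\,Hom_R^\bullet(P,M^\bullet)$ for any complex $M^\bullet$, where $P$ is viewed as a complex concentrated in degree $0$ so that $Hom_R^\bullet(P,M^\bullet)$ is simply the complex with $Hom_R(P,M^i)$ in degree $i$ and differential $Hom_R(P,d_M^i)$. In each degree $i<n$ both sides equal $Hom_R(P,M^i)$ with the same differential; in degree $n$ the left-hand side is $Hom_R(P,Ker(d_M^n))$ while the right-hand side is $Ker\bigl(Hom_R(P,d_M^n):Hom_R(P,M^n)\to Hom_R(P,M^{n+1})\bigr)$, and these agree by left exactness of $Hom_R(P,\_\_)$; both sides vanish in degrees $>n$. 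One checks that under this identification the differential from degree $n-1$ to degree $n$ (post-composition with the corestricted $d_M^{n-1}$) and the induced maps match, so that $Hom_R^\bullet(P,\tau_{\leq n}u^\bullet)$ is identified with $\tau_{\leq n}\,Hom_R^\bullet(P,u^\bullet)$, naturally in $P$.

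It then remains to note that the smart truncation $\tau_{\leq n}$ sends quasi-isomorphisms to quasi-isomorphisms: for any complex $X^\bullet$ one has $H^i(\tau_{\leq n}X^\bullet)=H^i(X^\bullet)$ for $i\leq n$ and $H^i(\tau_{\leq n}X^\bullet)=0$ for $i>n$, naturally in $X^\bullet$ (this is the $\tau_{\leq n}$-analogue of the observation used for $\tau^{\geq -i}$ in the proof of Lemma \ref{Lem3.10}). Since $u^\bullet$ is a pure quasi-isomorphism, $Hom_R^\bullet(P,u^\bullet)$ is a quasi-isomorphism for every $P\in\mathcal{PP}$; hence so is $\tau_{\leq n}\,Hom_R^\bullet(P,u^\bullet)$, and therefore so is $Hom_R^\bullet(P,\tau_{\leq n}u^\bullet)$ by the identification above. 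As $P$ ranges over all pure projective modules, this gives that $\tau_{\leq n}u^\bullet$ is a pure quasi-isomorphism, which is the assertion.
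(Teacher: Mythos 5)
Your argument is correct and is essentially identical to the paper's proof: both reduce to showing $Hom_R^\bullet(P,\tau_{\leq n}u^\bullet)$ is a quasi-isomorphism for each pure projective $P$, identify $Hom_R^\bullet(P,\tau_{\leq n}M^\bullet)$ with $\tau_{\leq n}Hom_R^\bullet(P,M^\bullet)$ via left exactness of $Hom_R(P,\_\_)$, and use that $\tau_{\leq n}$ preserves quasi-isomorphisms. Your write-up just spells out the degreewise verification in a bit more detail.
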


\begin{proof} We will show that  $Hom_R^\bullet(P,\tau_{\leq n}u^\bullet):Hom_R^\bullet(P,\tau_{\leq n}M^\bullet)
\longrightarrow Hom_R^\bullet(P,\tau_{\leq n}N^\bullet)$ is a quasi-isomorphism for any pure projective
$P\in \mathcal{PP}$. Since $u^\bullet$ is a pure quasi-isomorphism, we already know that 
$Hom_R^\bullet(P,u^\bullet):Hom_R^\bullet(P,M^\bullet)\longrightarrow Hom_R^\bullet
(P,N^\bullet)$ is a quasi-isomorphism. This induces a quasi-isomorphism 
on the truncations:
\begin{equation}\label{5.1dsm}
\tau_{\leq n}Hom_R^\bullet(P,u^\bullet):\tau_{\leq n}Hom_R^\bullet(P,M^\bullet)\longrightarrow 
\tau_{\leq n}Hom_R^\bullet
(P,N^\bullet)
\end{equation} Further, since the functor $Hom_R(P,\_\_)$ preserves kernels, it is clear from
the definition in \eqref{abtrunc} that the truncations satisfy:
\begin{equation}\label{5.2dsm}
Hom_R^\bullet(P,\tau_{\leq n}M^\bullet)=\tau_{\leq n}Hom_R^\bullet(P,M^\bullet)
\qquad Hom_R^\bullet(P,\tau_{\leq n}N^\bullet)=\tau_{\leq n}Hom_R^\bullet
(P,N^\bullet) 
\end{equation} for any integer $n\in \mathbb Z$. 
Combining \eqref{5.1dsm} and \eqref{5.2dsm}, the result follows. 
\end{proof}

\medskip
\begin{thm}\label{P5.2} Let $u^\bullet:M^\bullet\longrightarrow N^\bullet$ be a pure quasi-isomorphism of 
 complexes. Then, for any cochain complex $A^\bullet\in C(R)$, the induced morphism
$A^\bullet\otimes_Ru^\bullet: (A^\bullet\otimes_R M^\bullet)^\bullet\longrightarrow (A^\bullet\otimes_RN
^\bullet)^\bullet$ is a pure 
quasi-isomorphism. 
\end{thm}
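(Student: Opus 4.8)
The plan is to peel $A^\bullet$ apart until it is concentrated in a single degree, where the assertion is exactly the elementary fact recalled at the start of this section. The one tool I use repeatedly is the criterion that $v^\bullet$ is a pure quasi-isomorphism precisely when $N'\otimes_R v^\bullet$ is a quasi-isomorphism for every module $N'\in R-Mod$; together with the associativity isomorphism $(N'\otimes_R(B^\bullet\otimes_R M^\bullet))^\bullet\cong((N'\otimes_R B^\bullet)^\bullet\otimes_R M^\bullet)^\bullet$ of complexes, this lets me turn every assertion into a statement about ordinary quasi-isomorphisms and absorb an extra module-tensor-factor into $A^\bullet$ whenever convenient. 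So it suffices to prove that $(A^\bullet\otimes_R u^\bullet)$ is a pure quasi-isomorphism for an arbitrary $A^\bullet\in C(R)$.

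First I would reduce to the case where $A^\bullet$ has only finitely many nonzero terms. Writing $A^\bullet=\varinjlim_n\tau_{\leq n}A^\bullet$ as in \eqref{abtrunc}, and then $\tau_{\leq n}A^\bullet=\varinjlim_m\sigma_{\geq -m}\tau_{\leq n}A^\bullet$ where $\sigma_{\geq k}$ denotes the brutal truncation with $(\sigma_{\geq k}A)^i=A^i$ for $i\geq k$ and $0$ for $i<k$, one exhibits $A^\bullet$ as a filtered colimit of subcomplexes each having only finitely many nonzero terms. Since $-\otimes_R M^j$ commutes with filtered colimits and the total complex is a direct sum in each degree, $(A^\bullet\otimes_R M^\bullet)^\bullet$ is the corresponding filtered colimit of the complexes $(\sigma_{\geq -m}\tau_{\leq n}A^\bullet\otimes_R M^\bullet)^\bullet$, and likewise with $N^\bullet$ in place of $M^\bullet$. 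As filtered colimits of $R$-modules are exact they commute with cohomology, so if each of these finite pieces tensored with $u^\bullet$ is a pure quasi-isomorphism, then so is $A^\bullet\otimes_R u^\bullet$ (test against an arbitrary $N'$ and pass to the colimit of quasi-isomorphisms). This reduces the proposition to the case of a complex $A^\bullet$ with finitely many nonzero terms. (Alternatively, one can first use Lemma \ref{Lm5.1} to replace $M^\bullet$ and $N^\bullet$ by the bounded-above complexes $\tau_{\leq n}M^\bullet$ and $\tau_{\leq n}N^\bullet$, via $M^\bullet=\varinjlim_n\tau_{\leq n}M^\bullet$ and the same colimit argument.)

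For a finite $A^\bullet$ I would induct on the number of nonzero terms. Choosing an integer $k$ that splits off the top nonzero degree, the brutal truncation gives a short exact sequence of complexes $0\to\sigma_{\geq k}A^\bullet\to A^\bullet\to\sigma_{<k}A^\bullet\to 0$ that is split in each degree; hence, after applying $-\otimes_R M^\bullet$ and forming total complexes, it remains a degreewise-split (in particular short exact) sequence of complexes, and similarly with $N^\bullet$, and $u^\bullet$ induces a morphism between the two. Tensoring with an arbitrary module $N'$ once more leaves a degreewise-split short exact sequence, so the long exact cohomology sequences and the five lemma show that $N'\otimes_R(A^\bullet\otimes_R u^\bullet)$ is a quasi-isomorphism as soon as the corresponding maps for $\sigma_{\geq k}A^\bullet$ and $\sigma_{<k}A^\bullet$ are; since these have fewer nonzero terms, the induction goes through. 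The base case is $A^\bullet$ concentrated in a single degree, which up to a shift is $A\otimes_R u^\bullet$ for a module $A$ --- and this is a pure quasi-isomorphism by the fact recalled at the start of the section, shifting preserving pure acyclicity.

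The whole argument is essentially formal; the point that needs care --- and the one I would treat as the main obstacle --- is the recurring verification that ``pure quasi-isomorphism'' survives each of these moves. Every such verification comes down to the same two facts: applying $N'\otimes_R(-)$ converts the question into one about ordinary quasi-isomorphisms, and filtered colimits of $R$-modules together with degreewise-split short exact sequences of complexes are exact, hence commute with cohomology. Apart from that, one needs only the routine (sign-sensitive but harmless) identifications of $(A^\bullet\otimes_R M^\bullet)^\bullet$ with its truncation colimit, its shifts, and associativity rearrangements, together with the fact that the total complex, being a direct sum in each degree, commutes with filtered colimits.
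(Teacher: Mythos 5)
Your argument is correct, and it reaches the conclusion by a genuinely different route in its core step. The paper also begins by testing against an arbitrary module $B$ and passing to filtered colimits of truncations, but it then truncates $M^\bullet$ and $N^\bullet$ as well (this is what Lemma \ref{Lm5.1} is for), so that $C_m^\bullet=\tau_{\leq m}(B\otimes_RA^\bullet)$, $\tau_{\leq n}M^\bullet$ and $\tau_{\leq n}N^\bullet$ are all bounded above, and it invokes the standard spectral-sequence (filtration) argument for bounded-above bicomplexes to upgrade the rowwise quasi-isomorphisms $C_m^i\otimes_R\tau_{\leq n}u^\bullet$ to a quasi-isomorphism of total complexes, before taking direct limits over $m$ and $n$. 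You instead shrink $A^\bullet$ all the way down to a complex with finitely many nonzero terms (combining $\tau_{\leq n}$ with the brutal truncation) and then run a finite induction on the number of terms using the degreewise-split short exact sequences coming from brutal truncation and the five lemma, with base case a single module in a single degree --- which is exactly the elementary observation opening Section 5. This buys you two things: you never need Lemma \ref{Lm5.1} or any truncation of $M^\bullet$ and $N^\bullet$, and you avoid the spectral-sequence convergence issue entirely, replacing it with the five lemma; the cost is an extra layer of colimit bookkeeping (the double colimit over $m$ and $n$, and the check that the totalization, being a direct sum in each degree, commutes with these filtered colimits), all of which you have correctly identified and which does go through since filtered colimits of $R$-modules are exact. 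Both proofs are valid; yours is the more elementary of the two.
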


\begin{proof} We choose any $R$-module $B$ and set $C^\bullet:=B\otimes_RA^\bullet$. Further, for any
integer $m\in \mathbb Z$, we set $C_m^\bullet:=\tau_{\leq m}C^\bullet$. Now for any $m,n\in \mathbb Z$, we claim that
the induced morphism:
\begin{equation}\label{eq5.3}
(C_m^\bullet\otimes_R\tau_{\leq n}M^\bullet)^\bullet\longrightarrow (C_m^\bullet\otimes_R\tau_{\leq n}N^\bullet)^\bullet
\end{equation} is a quasi-isomorphism. From Lemma \ref{Lm5.1}, we know that $
\tau_{\leq n}u^\bullet:\tau_{\leq n}M^\bullet
\longrightarrow \tau_{\leq n}N^\bullet$ is a pure quasi-isomorphism. It follows that for any fixed $i\in \mathbb Z$, 
the morphism
\begin{equation}\label{eq5.4}
C_m^i\otimes_R\tau_{\leq n}u^\bullet : C_m^i\otimes_R(\tau_{\leq n}M^\bullet)\longrightarrow C_m^i\otimes_R(\tau_{\leq n}N^\bullet)
\end{equation} is  a quasi-isomorphism. Since $C_m^\bullet$, $\tau_{\leq n}M^\bullet$ and
$\tau_{\leq n}N^\bullet$ are all bounded above complexes, it now follows from a standard spectral sequence
argument (see, for instance, \cite[Tag 0132]{Stacks}) that the quasi-isomorphisms in \eqref{eq5.4} induce a quasi-isomorphism of the total
complexes in \eqref{eq5.3}. Taking  direct limits of quasi-isomorphisms over all $m$, $n\in \mathbb Z$, 
we obtain a quasi-isomorphism:
\begin{equation}
B\otimes_R(A^\bullet\otimes_RM^\bullet)^\bullet=(C^\bullet\otimes_RM^\bullet)^\bullet\longrightarrow (C^\bullet\otimes_RN^\bullet)^\bullet=B\otimes_R(A^\bullet\otimes_RN^\bullet)^\bullet
\end{equation} for any $R$-module $B$. This proves the result. 

\end{proof}

\medskip
From Proposition \ref{P5.2}, it is clear that the tensor product $\otimes : C(R)\times C(R)
\longrightarrow C(R)$ descends to a tensor product:
\begin{equation}\label{5tensor}
\otimes: D_{pur}(R)\times D_{pur}(R)\longrightarrow D_{pur}(R)
\end{equation} on the pure derived category of $R$-modules.

\medskip
\begin{thm}\label{P5.3} Let $u^\bullet:M^\bullet\longrightarrow N^\bullet$ be a pure quasi-isomorphism 
of complexes. Then, the following hold:

\smallskip
(a) Let $P^\bullet$ be a $K$-pure projective complex. Then, 
$Hom_R^\bullet(P^\bullet,u^\bullet):Hom_R^\bullet(P^\bullet,M^\bullet)\longrightarrow 
Hom_R^\bullet(P^\bullet,N^\bullet)$ is a pure quasi-isomorphism. 

\smallskip
(b) Let $I^\bullet$ be a $K$-pure injective complex. Then,
$Hom_R^\bullet(u^\bullet,I^\bullet):Hom_R^\bullet(N^\bullet,I^\bullet)
\longrightarrow Hom_R^\bullet(M^\bullet,I^\bullet)$ is a pure quasi-isomorphism. 
\end{thm}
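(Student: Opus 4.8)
The plan is to reduce both statements to the characterization of pure quasi-isomorphisms via $Hom_{K(R)}$ (Remark 2.8 in the excerpt, following Proposition \ref{P4.2} and Proposition \ref{P3.16}), combined with the defining properties of $K$-pure projective and $K$-pure injective complexes. For part (a), I must show $Hom_R^\bullet(P^\bullet, u^\bullet)$ is a pure quasi-isomorphism; by the cone description of pure quasi-isomorphisms, this amounts to showing that $Hom_R^\bullet(P^\bullet, C_u^\bullet)$ is pure acyclic, since $Hom_R^\bullet(P^\bullet, \_\_)$ commutes with mapping cones. But $u^\bullet$ being a pure quasi-isomorphism means $C_u^\bullet$ is pure acyclic, and $P^\bullet$ being $K$-pure projective means (by Definition \ref{Kproj}) precisely that $Hom_R^\bullet(P^\bullet, \_\_)$ carries pure acyclic complexes to pure acyclic complexes. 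So $Hom_R^\bullet(P^\bullet, C_u^\bullet)$ is pure acyclic, which gives the result directly.

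For part (b), the argument is the dual, with a small twist because $Hom_R^\bullet(\_\_, I^\bullet)$ is contravariant. One checks that $Hom_R^\bullet(\_\_, I^\bullet)$ takes the mapping cone $C_u^\bullet$ of $u^\bullet$ to (a shift of) the mapping cone of $Hom_R^\bullet(u^\bullet, I^\bullet)$; concretely, applying $Hom_R^\bullet(\_\_, I^\bullet)$ to the distinguished triangle $M^\bullet \overset{u^\bullet}{\to} N^\bullet \to C_u^\bullet \to M[1]^\bullet$ yields a distinguished triangle $Hom_R^\bullet(C_u^\bullet, I^\bullet) \to Hom_R^\bullet(N^\bullet, I^\bullet) \overset{Hom_R^\bullet(u^\bullet, I^\bullet)}{\to} Hom_R^\bullet(M^\bullet, I^\bullet) \to$, so the cone of $Hom_R^\bullet(u^\bullet, I^\bullet)$ is (up to shift) $Hom_R^\bullet(C_u^\bullet, I^\bullet)$. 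Since $C_u^\bullet$ is pure acyclic and $I^\bullet$ is $K$-pure injective (Definition in Section 3: $Hom_R^\bullet(\_\_, I^\bullet)$ takes pure acyclic complexes to pure acyclic complexes), this cone is pure acyclic, so $Hom_R^\bullet(u^\bullet, I^\bullet)$ is a pure quasi-isomorphism.

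The only point requiring a little care — and the step I expect to be the mild obstacle — is the bookkeeping in part (b) identifying $Hom_R^\bullet(C_u^\bullet, I^\bullet)$ with a shift of the cone of $Hom_R^\bullet(u^\bullet, I^\bullet)$ as complexes, rather than merely as objects of $K(R)$; this is the same kind of explicit cone manipulation carried out in the proof of Proposition \ref{P3.9}, and follows from the fact that $Hom_R^\bullet(\_\_, I^\bullet)$ is an additive contravariant functor that sends $M[1]^\bullet$ to $Hom_R^\bullet(M^\bullet, I^\bullet)[-1]$ and turns the split-in-each-degree inclusion $N^\bullet \hookrightarrow C_u^\bullet$ into the corresponding split surjection. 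Alternatively, one may avoid cones altogether: since pure acyclicity of $Hom_R^\bullet(C_u^\bullet, I^\bullet)$ is what we want and $C_u^\bullet$ is pure acyclic, it suffices to invoke the defining property of $K$-pure injectivity directly, then argue that pure acyclicity of this complex is equivalent to $Hom_R^\bullet(u^\bullet, I^\bullet)$ being a pure quasi-isomorphism via Remark 2.8 applied in the variable $I^\bullet$. Either route closes the proof quickly.
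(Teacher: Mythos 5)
Your argument is correct and is essentially identical to the paper's own proof: part (a) uses that $Hom_R^\bullet(P^\bullet,\_\_)$ commutes with mapping cones so that the cone of $Hom_R^\bullet(P^\bullet,u^\bullet)$ is $Hom_R^\bullet(P^\bullet,C_u^\bullet)$, which is pure acyclic by $K$-pure projectivity, and part (b) dualizes this using the fact that the contravariant functor $Hom_R^\bullet(\_\_,I^\bullet)$ preserves cones up to a shift (the paper cites Lipman (1.5.3) for exactly the bookkeeping point you flag) together with the observation that shifts of pure acyclic complexes are pure acyclic.
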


\begin{proof} (a) Let $C_u^\bullet$ denote the pure acyclic complex that is the mapping cone of the 
pure quasi-isomorphism 
$u^\bullet:M^\bullet\longrightarrow N^\bullet$. For any complex $B^\bullet\in C(R)$, we know that
the  functor $Hom_R^\bullet(B^\bullet,\_\_)$ on $C(R)$ commutes
with mapping cones (see, for instance, \cite[(A.2.1.2)]{LCs}). In particular, we see that:
\begin{equation}\label{eq5.7}
Cone(Hom_R^\bullet(P^\bullet,u^\bullet))=Hom_R^\bullet(P^\bullet,C_u^\bullet) 
\end{equation} Since $P^\bullet$ is a $K$-pure projective complex, it follows from 
the definition that $Hom_R^\bullet(P^\bullet,C_u^\bullet) $ is pure acyclic. Combining
this with \eqref{eq5.7}, we conclude that $Cone(Hom_R^\bullet(P^\bullet,u^\bullet))$ is 
pure acyclic and hence $Hom_R^\bullet(P^\bullet,u^\bullet)$ is a pure quasi-isomorphism. 

\medskip
(b) For any complex $B^\bullet\in C(R)$, 
the  contravariant functor $Hom_R^\bullet(\_\_,B^\bullet)$ on $C(R)$ preserves mapping cones
up to a shift (see, for instance, \cite[(1.5.3)]{Lip}). Since the shift of a pure acylic complex is
still pure acyclic, the result of part (b) now follows by applying an argument dual 
to that in part (a). 

\end{proof}

\medskip
Let $M^\bullet$, $N^\bullet\in C(R)$ be two arbitrary complexes of $R$-modules. From the results
of Section 4, we may choose a pure quasi-isomorphism $P_M^\bullet\longrightarrow M^\bullet$ giving a 
pure projective resolution of $M^\bullet$. Similarly, using the results of Section 3, we may choose
a pure quasi-isomorphism $N^\bullet\longrightarrow I_N^\bullet$ giving a pure injective resolution
of $N^\bullet$. We can now define a ``pure derived Hom functor'':
\begin{equation}\label{5hom}
\begin{array}{c}
PHom^\bullet_R(\_\_,\_\_): D_{pur}(R)^{op}\times D_{pur}(R)\longrightarrow D_{pur}(R)\\
PHom^\bullet_R(M^\bullet,N^\bullet):=Hom_R^\bullet(P_M^\bullet,I_N^\bullet) \\
\end{array}
\end{equation} The functor in \eqref{5hom} is well defined due to the pure quasi-isomorphisms
\begin{equation}
Hom_R^\bullet(P_M^\bullet,N^\bullet)\longrightarrow Hom_R^\bullet(P_M^\bullet,I_N^\bullet)\longleftarrow Hom_R^\bullet(M^\bullet,I_N^\bullet)
\end{equation} that both follow from Proposition \ref{P5.3}. 

\medskip
We conclude by showing that we have obtained a closed symmetric monoidal structure 
on the pure derived category $D_{pur}(R)$. 

\medskip
\begin{thm}\label{P5.6} For any $A^\bullet$, $B^\bullet$, $C^\bullet\in C(R)$, we have natural
isomorphisms:
\begin{equation}\label{qe5.10}
\begin{array}{c}
PHom_R^\bullet((A^\bullet\otimes_RB^\bullet)^\bullet,C^\bullet)\cong PHom_R^\bullet (A^\bullet,P
Hom^\bullet_R(B^\bullet,C^\bullet))\\
Hom_{D_{pur}(R)}((A^\bullet\otimes_RB^\bullet)^\bullet,C^\bullet)\cong Hom_{D_{pur}}(A^\bullet,P
Hom^\bullet_R(B^\bullet,C^\bullet))\\
\end{array}
\end{equation}

\end{thm}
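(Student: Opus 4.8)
The plan is to reduce the isomorphisms in \eqref{qe5.10} to the classical tensor-Hom adjunction on the level of cochain complexes, using the resolutions constructed in Sections~3 and~4 together with Proposition~\ref{P5.2} and Proposition~\ref{P5.3} to guarantee that the relevant objects represent the pure derived functors. First I would fix pure projective resolutions $v_A^\bullet:P_A^\bullet\longrightarrow A^\bullet$ and $v_B^\bullet:P_B^\bullet\longrightarrow B^\bullet$ (Proposition~\ref{Thproj}), and a pure injective resolution $w_C^\bullet:C^\bullet\longrightarrow I_C^\bullet$ (Proposition~\ref{Thinj}). By definition \eqref{5hom}, $PHom^\bullet_R(B^\bullet,C^\bullet)=Hom_R^\bullet(P_B^\bullet,I_C^\bullet)$. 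The key observation I would record is that $Hom_R^\bullet(P_B^\bullet,I_C^\bullet)$ is itself $K$-pure injective: indeed, for any pure acyclic $A'^\bullet$, one has $Hom_R^\bullet(A'^\bullet,Hom_R^\bullet(P_B^\bullet,I_C^\bullet))\cong Hom_R^\bullet((A'^\bullet\otimes_RP_B^\bullet)^\bullet,I_C^\bullet)$, and since $P_B^\bullet$ is $K$-pure projective (Proposition~\ref{P4.8t}(b) applied to the direct limit, or directly from \eqref{eq5.7}) the complex $A'^\bullet\otimes_R P_B^\bullet$ is pure acyclic — this requires a small lemma, that tensoring a pure acyclic complex with a $K$-pure projective complex yields a pure acyclic complex, which follows because $F\otimes_R(A'^\bullet\otimes_RP_B^\bullet)\cong (F\otimes_RA'^\bullet)\otimes_RP_B^\bullet$ and one can rewrite this via $Hom$ of a finitely presented module; I would state and prove this as an auxiliary claim. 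Consequently $Hom_R^\bullet(P_B^\bullet,I_C^\bullet)$, being $K$-pure injective, may be used as the ``$I_N^\bullet$'' in \eqref{5hom}, so $PHom_R^\bullet(A^\bullet,PHom^\bullet_R(B^\bullet,C^\bullet))\cong Hom_R^\bullet(P_A^\bullet,Hom_R^\bullet(P_B^\bullet,I_C^\bullet))$ (and by Proposition~\ref{P5.3}(b) the choice of resolution of $A^\bullet$ does not matter up to pure quasi-isomorphism).

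Next I would compute the left-hand side. Since $P_A^\bullet$ and $P_B^\bullet$ are $K$-pure projective, $(P_A^\bullet\otimes_RP_B^\bullet)^\bullet$ is $K$-pure projective by the auxiliary claim above, and by Proposition~\ref{P5.2} (applied twice, in each variable) the natural map $(P_A^\bullet\otimes_RP_B^\bullet)^\bullet\longrightarrow (A^\bullet\otimes_RB^\bullet)^\bullet$ is a pure quasi-isomorphism. Hence $(P_A^\bullet\otimes_RP_B^\bullet)^\bullet$ serves as a pure projective resolution of $(A^\bullet\otimes_RB^\bullet)^\bullet$, giving
\begin{equation*}
PHom_R^\bullet((A^\bullet\otimes_RB^\bullet)^\bullet,C^\bullet)\cong Hom_R^\bullet((P_A^\bullet\otimes_RP_B^\bullet)^\bullet,I_C^\bullet).
\end{equation*}
Now I would invoke the classical adjunction isomorphism of complexes
\begin{equation*}
Hom_R^\bullet((P_A^\bullet\otimes_RP_B^\bullet)^\bullet,I_C^\bullet)\cong Hom_R^\bullet(P_A^\bullet,Hom_R^\bullet(P_B^\bullet,I_C^\bullet)),
\end{equation*}
which holds at the level of $C(R)$ by the standard sign-respecting tensor-Hom adjunction for total complexes. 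Comparing with the expression for the right-hand side derived in the first paragraph yields the first isomorphism in \eqref{qe5.10}. Naturality in $A^\bullet,B^\bullet,C^\bullet$ follows because each of these identifications is natural and each resolution can be chosen functorially up to pure homotopy, using Proposition~\ref{P3.5} and Proposition~\ref{P4.4}(b) to compare choices.

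For the second isomorphism, I would apply $H^0$ and use the bridge between the homotopy category and the pure derived category. By Proposition~\ref{P3.16}, since the target $PHom^\bullet_R(B^\bullet,C^\bullet)$ is $K$-pure injective, $Hom_{D_{pur}(R)}(A^\bullet,PHom^\bullet_R(B^\bullet,C^\bullet))\cong Hom_{K(R)}(A^\bullet,PHom^\bullet_R(B^\bullet,C^\bullet))$, and replacing $A^\bullet$ by its pure projective resolution $P_A^\bullet$ (legitimate since $v_A^\bullet$ is a pure quasi-isomorphism and we may also use Proposition~\ref{P4.2}(c)) this equals $H^0 Hom_R^\bullet(P_A^\bullet,PHom^\bullet_R(B^\bullet,C^\bullet))$. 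Similarly $Hom_{D_{pur}(R)}((A^\bullet\otimes_RB^\bullet)^\bullet,C^\bullet)\cong H^0 Hom_R^\bullet((P_A^\bullet\otimes_RP_B^\bullet)^\bullet,I_C^\bullet)$. Applying $H^0$ to the chain-level adjunction isomorphism of the previous paragraph then gives the result. The main obstacle I anticipate is the auxiliary claim — verifying that the tensor product of a pure acyclic complex with a $K$-pure projective complex is again pure acyclic, and relatedly that $(P_A^\bullet\otimes_RP_B^\bullet)^\bullet$ is $K$-pure projective — since this requires care with the total-complex construction and the interplay of $\otimes_R$ with infinite products appearing in $Hom_R^\bullet$; everything else is bookkeeping with the adjunction and the representability statements already established in Propositions~\ref{P3.16}, \ref{P4.2}, \ref{P5.2} and~\ref{P5.3}.
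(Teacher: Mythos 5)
Your proposal is correct and follows essentially the same route as the paper: resolve $A^\bullet$, $B^\bullet$ by pure projectives and $C^\bullet$ by a pure injective complex, observe that $(P_A^\bullet\otimes_RP_B^\bullet)^\bullet$ is a pure projective resolution of $(A^\bullet\otimes_RB^\bullet)^\bullet$, apply the chain-level tensor-Hom adjunction, and use Propositions \ref{P3.16} and \ref{P4.2} to pass between $K(R)$ and $D_{pur}(R)$. The only remark worth adding is that the ``auxiliary claim'' you flag as the main obstacle (the tensor product of a pure acyclic complex with an arbitrary complex is pure acyclic) is an immediate consequence of Proposition \ref{P5.2} applied to the pure quasi-isomorphism $0\longrightarrow A'^\bullet$, so no new lemma is needed.
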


\begin{proof} We choose pure projective resolutions $P_A^\bullet$ and $P_B^\bullet$ of 
$A^\bullet$ and $B^\bullet$ respectively. Let $I_C^\bullet$ be a pure injective resolution
of $C^\bullet$. From Proposition \ref{P5.2}, it follows that $(P_A^\bullet\otimes_RP_B^\bullet)^\bullet$
is pure quasi-isomorphic to $(A^\bullet\otimes_RB^\bullet)^\bullet$. From the definitions, it is also clear
that $(P_A^\bullet\otimes_RP_B^\bullet)^\bullet$ is a $K$-pure projective complex each of the terms of which
is pure projective. Hence, $(P_A^\bullet\otimes_RP_B^\bullet)^\bullet$ is a pure projective resolution 
of $(A^\bullet\otimes_RB^\bullet)^\bullet$. It now follows that:
\begin{equation}
\begin{array}{ll}
PHom_R^\bullet((A^\bullet\otimes_RB^\bullet)^\bullet,C^\bullet)&= Hom_R^\bullet((P_A^\bullet\otimes_RP_B^\bullet)^\bullet,I_C^\bullet)\\
& \cong Hom_R^\bullet(P_A^\bullet, Hom_R^\bullet(P_B^\bullet,I_C^\bullet))\\
& = Hom_R^\bullet(P_A^\bullet,PHom_R^\bullet(B^\bullet,C^\bullet))\\
&=PHom_R^\bullet(A^\bullet,PHom_R^\bullet(B^\bullet,C^\bullet))\\
\end{array} 
\end{equation} To prove the second isomorphism in \eqref{qe5.10}, we proceed as follows: it is  already
clear  that:
\begin{equation}\label{5.12qe}
Hom_{D_{pur}(R)}((A^\bullet\otimes_RB^\bullet)^\bullet,C^\bullet)\cong Hom_{D_{pur}(R)}((P_A^\bullet\otimes_RP_B^\bullet)^\bullet,I_C^\bullet)
\end{equation} Since $I_C^\bullet$ is $K$-pure injective, it follows from Proposition \ref{P3.16} that:
\begin{equation}\label{5.13qe}
Hom_{D_{pur}(R)}((P_A^\bullet\otimes_RP_B^\bullet)^\bullet,I_C^\bullet)\cong Hom_{K(R)}((P_A^\bullet\otimes_RP_B^\bullet)^\bullet,I_C^\bullet)
\end{equation} The closed monoidal structure on $K(R)$ now gives:
\begin{equation}\label{5.14qe}
Hom_{K(R)}((P_A^\bullet\otimes_RP_B^\bullet)^\bullet,I_C^\bullet)
\cong Hom_{K(R)}(P_A^\bullet,Hom_R^\bullet(P_B^\bullet,I_C^\bullet))
\end{equation} We can put $P
Hom^\bullet_R(B^\bullet,C^\bullet)=Hom_R^\bullet(P_B^\bullet,I_C^\bullet)$. Further, since
$P_A^\bullet$ is $K$-pure projective, it follows from Proposition \ref{P4.2} that:
\begin{equation}\label{5.15qe}
 Hom_{K(R)}(P_A^\bullet,Hom_R^\bullet(P_B^\bullet,I_C^\bullet))\cong 
Hom_{D_{pur}(R)}(P_A^\bullet,P
Hom^\bullet_R(B^\bullet,C^\bullet))
\end{equation} Since $P_A^\bullet\cong A^\bullet$ in $D_{pur}(R)$, the sequence of isomorphisms
\eqref{5.12qe}-\eqref{5.15qe} proves the second isomorphism in \eqref{qe5.10}. 

\end{proof}

\small

\end{document}